\theoremstyle{plain}
\newtheorem{thm}{Theorem}[section]
\newtheorem{lem}[thm]   {Lemma}
\newtheorem{cor}[thm]   {Corollary}
\newtheorem{prop}[thm]  {Proposition}
\newtheorem{athm}{Theorem}
\newtheorem{acor}{Corollary}[athm]
\theoremstyle{definition}
\newtheorem{defn}[thm]  {Definition}
\newtheorem{conj}[thm]{Conjecture}
\newtheorem{ex}[thm]{Example}
\newtheorem{rem}[thm]{Remark}
\newtheorem{nota}[thm]{Notation}
\newcommand{\fA}{\mathfrak{A}} 
\newcommand{\bB}{\mathbf{B}} 
\newcommand{\bbB}{\mathbb{B}} 
\newcommand{\bS}{\mathbb{S}} 
\newcommand{\cC}{\mathcal{C}} 
\newcommand{\cD}{\mathcal{D}} 
\newcommand{\fe}{\mathfrak{e}} 
\newcommand{\cF}{\mathcal{F}} 
\newcommand{\cG}{\mathcal{G}} 
\newcommand{\fI}{\mathfrak{I}} 
\newcommand{\fM}{\mathfrak{M}} 
\newcommand{\cO}{\mathcal{O}}
\newcommand{\Q}{\mathbb{Q}} 
\newcommand{\R}{\mathbb{R}} 
\newcommand{\cS}{\mathcal{S}} 
\newcommand{\ft}{\mathfrak{t}} 
\newcommand{\fU}{\mathfrak{U}} 
\newcommand{\Z}{\mathbb{Z}} 
\newcommand{\bDelta}{\mathbf{\Delta}} 
\newcommand{\red}{\mathrm{red}}
\newcommand{\Ab}{\mathrm{Ab}} 
\newcommand{\free}{\mathrm{free}}
\DeclareMathOperator*{\bigast}{\raisebox{-0.6ex}{\scalebox{2.5}{$\ast$}}}
\newcommand{\Alg}{\mathrm{Alg}} 
\newcommand{\cat}{\mathrm{cat}} 
\newcommand{\Cat}{\mathrm{Cat}} 
\newcommand{\Seg}{\mathrm{Seg}} 
\newcommand{\inftwo}{{(\infty,2)}}
\newcommand{\Catinfty}{\Cat_\infty}
\newcommand{\RelCatinfty}{\mathrm{Rel}\Catinfty}
\newcommand{\Catinftwo}{\Cat_\inftwo}
\newcommand{\twosimeq}{{2\simeq}} 
\newcommand{\CAlg}{\mathrm{CAlg}} 
\newcommand{\cof}{\mathrm{cof}} 
\newcommand{\fib}{\mathrm{fib}} 
\newcommand{\colim}{\mathrm{colim}}
\newcommand{\Cob}{\mathrm{Cob}} 
\newcommand{\Fin}{\mathrm{Fin}} 
\newcommand{\Fun}{\mathrm{Fun}} 
\newcommand{\Hom}{\mathrm{Hom}}
\newcommand{\llax}{{\text{l-lax}}} 
\newcommand{\rlax}{{\text{r-lax}}} 
\newcommand{\Mod}{\mathrm{Mod}} 
\newcommand{\one}{\mathbf{1}} 
\newcommand{\op}{\mathrm{op}}
\newcommand{\PrL}{\mathrm{Pr}^L}
\newcommand{\SPrL}{\mathrm{S}\PrL}
\newcommand{\Pic}{\mathrm{Pic}} 
\newcommand{\PSh}{\mathscr{P}} 
\DeclareFontFamily{U}{min}{}
\DeclareFontShape{U}{min}{m}{n}{<-> udmj30}{}
\newcommand\yo{\!\text{\usefont{U}{min}{m}{n}\symbol{'207}}\!} 
\newcommand{\Day}{\mathrm{Day}} 
\newcommand{\Sp}{\mathrm{Sp}} 
\newcommand{\Cospan}{\mathrm{Cospan}} 
\newcommand{\bCospan}{\mathbb{C}\mathrm{ospan}} 
\newcommand{\Span}{\mathrm{Span}} 
\newcommand{\bSpan}{\mathbb{S}\mathrm{pan}} 
\newcommand{\Top}{\mathrm{Top}} 
\newcommand{\CFrob}{\mathrm{CFrob}} 
\newcommand{\GrCob}{\mathrm{GrCob}} 
\newcommand{\GFT}{\mathrm{GFT}} 
\newcommand{\Gaf}{\mathrm{Gaf}} 
\newcommand{\bGr}{{\mathbb{G}\mathrm{r}}} 
\newcommand{\Gr}{\mathrm{Gr}} 
\newcommand{\AVH}{\mathfrak{AVH}}
\newcommand{\tree}{\mathrm{tree}}
\newcommand{\OC}{\mathcal{OC}}
\newcommand{\Th}{\mathrm{Th}} 
\newcommand{\hAut}{\mathrm{hAut}} 
\newcommand{\map}{\mathrm{map}} 
\newcommand{\Out}{\mathrm{Out}} 
\newcommand{\Aut}{\mathrm{Aut}} 
\newcommand{\Poin}{\mathrm{Poin}} 
\newcommand{\asph}{\mathrm{asph}}
\newcommand{\hPsi}{{\hat\Psi}}
\newcommand{\Pxi}{{\Psi,\xi}} 
\newcommand{\set}[1]{\left\{#1\right\}}
\newcommand{\hto}{\hookrightarrow}
\newcommand{\ot}{\leftarrow}
\newcommand{\del}{\partial}
\newcommand{\Id}{\mathrm{Id}} 
\newcommand{\ul}[1]{\underline{#1}} 
\begin{document}

\keywords{String topology, Poincar\'e duality space, graph cobordism, outer space}

\title{String topology and graph cobordisms}
\subjclass[2020]{
18B10, 
18N55, 
18D20 
20E06, 
57P10, 
55P50 
}

\author{Andrea Bianchi}
\address{Max Planck Institute for Mathematics,
Vivatsgasse 7, Bonn,
Germany
}
\email{bianchi@mpim-bonn.mpg.de}
\dedicatory{To Annika, thankful for her patience and support}

\date{\today}
\begin{abstract}
We introduce a symmetric monoidal $\infty$-category $\mathrm{GrCob}$ of graph cobordisms between spaces, and use the homology of its morphism spaces to define string operations. Precisely, for an $E_\infty$-ring spectrum $R$ and an oriented $d$-dimensional $R$-Poincar\'e duality space $M$, we construct a ``graph field theory'' $\mathrm{GFT}_M$, i.e. a symmetric monoidal functor from a suitable $R$-linearisation of $\mathrm{GrCob}^\mathrm{op}$ to the category $\mathrm{Mod}_R$ of $R$-modules in spectra; the graph field theory takes an object $X\in\mathrm{GrCob}^\mathrm{op}$, i.e. a space, to the $R$-module $\mathrm{map}(X,M)\otimes R$ of $R$-chains on the mapping space from $X$ to $M$; by selecting suitable graph cobordisms we recover the basic string operations given by restriction, cross product with the fundamental class, and the Chas--Sullivan operations.

The construction is natural with respect to oriented homotopy equivalences of $R$-Poincar\'e duality spaces; in particular, restricting to the endomorphisms of $\emptyset\in\mathrm{GrCob}^\mathrm{op}$, we obtain characteristic classes of $R$-oriented $M$-fibrations parametrised by the suitably twisted homology of $\mathbf{B}\mathrm{Out}(F_n)$, recovering results of Berglund and Barkan--Steinebrunner.

Finally, we describe explicitly the morphism spaces in $\mathrm{GrCob}$, answering along the way a question by Hatcher. This allows us to construct a symmetric monoidal functor from the open-closed cobordism $\infty$-category $\mathcal{OC}$ to $\mathrm{GrCob}$. Composing with $\mathrm{GFT}_M$, we obtain an open-closed field theory with values in $\mathrm{Mod}_R$, attaining values $LM\otimes R$ and $M\otimes R$ at the circle and at the interval, respectively. We expect this to recover and extend constructions of Cohen, Godin and others.

\end{abstract}
\maketitle
\tableofcontents
\section{Introduction}
\subsection{String topology and string operations}
\label{subsec:basic}
In the first part of the introduction, we let $R$ be a discrete commutative ring for (singular) homology coefficients, and we let $M$ be a smooth, closed, $R$-oriented manifold of dimension $d\ge0$.

String topology, introduced by Chas and Sullivan \cite{ChasSullivan}, is mostly concerned with the study of the free loop space $LM=\map(S^1,M)$:
the topology of this space is related to the study of closed geodesics for a Riemannian metric on $M$ \cite{LyusternikFet,GromollMeyer}, and to the symplectic homology of the Liouville domain $T^*M$ \cite{ViterboICM,ViterboI,ViterboII,AbbSch,SalamonWeber,Kragh,Abou}.
In this article we study more generally mapping spaces $M^X=\map(X,M)$, for arbitrary spaces $X$, and their homology with $R$-coefficients.

The collection of the homology groups $H_*(M^X;R)$, for varying $X$, carries additional algebraic structure, arising in the form of ``string operations'' $H_*(M^X;R)\to H_*(M^Y;R)$, for suitably related spaces $X$ and $Y$. The most basic operations are the following three.
\begin{enumerate}
 \item For each continuous map $Y\to X$ of topological spaces, we obtain a map $M^X\to M^Y$ inducing an operation $H_*(M^X;R)\to H_*(M^Y;R)$ in homology.
 \item Let $X$ be a space and let $Y=X\sqcup*$ be the space obtained from $X$ by adjoining a point. Then $M^Y\cong M^X\times M$, and the cross product with the fundamental class of $M$ gives an operation
 \[
 \begin{tikzcd}[column sep=40pt]
  H_*(M^X;R)\ar[r,"{-\times[M]_R}"] & H_{*+d}(M^X\times M;R)\cong H_{*+d}(M^Y;R).
 \end{tikzcd}
 \]
 \item Let $X$ be a space and let $Y= X\sqcup_{\partial I}I$ be a space obtained from $X$ by glueing an interval $I=[0,1]$ along its boundary, via a map $\partial I\to X$. One can then define a string operation $H_*(M^X;R)\to H_{*-d}(M^Y;R)$
 in the spirit of Chas and Sullivan \cite{ChasSullivan}: up to homotopy equivalent replacements, we may assume that $\del I\to X$ is a cofibration, and hence replace $Y$ by $X/\del I$; then $M^{X/\del I}\hookrightarrow M^X$ admits an oriented normal bundle of dimension $d$, and we consider the following composite, involving the Thom isomorphism
 \[
 H_*(M^X;R)\to H_*(M^X,M^X\setminus M^{X/\del I};R)\cong H_{*-d}(M^{X/\del I};R)\cong H_{*-d}(M^Y;R).
 \]
\end{enumerate}

\subsection{Graph cobordisms}
Graph cobordisms generalise simultaneously the situations (1)-(3) in the previous list.
\begin{defn}
 \label{defn:graphcob}
Let $X$ and $Y$ be two spaces. A \emph{graph cobordism} from $Y$ to $X$ is the datum of a diagram of spaces $Y\to W\hookleftarrow X$, together with a finite, relative cell structure on the pair $(W,X)$ consisting only of 0-cells and 1-cells.

More precisely, we fix finite sets $V,E$ and a map $\sigma\colon E\times\del I\to X\sqcup V$, we let $W$ be the pushout of $(X\sqcup V)\leftarrow E\times\del I\to E\times I$, and we fix a map $Y\to W$.
\end{defn}

Given a graph cobordism $Y\to W\hookleftarrow X$, we can define as follows a string operation $H_*(M^X;R)\to H_{*+d\cdot\chi(W,X)}(M^Y;R)$:
\begin{itemize}
 \item we fist define an operation $H_*(M^X;R)\to H_{*+d\cdot |V|}(M^{X\sqcup V};R)$, where $V$ is the set of 0-cells in $(W,X)$, by iterating operations of type (2);
 \item we then map $H_{*+d\cdot|V|}(M^{X\sqcup V};R)\to H_{*+d\cdot\chi(W,X)}(M^W;R)$ by iterating operations of type (3);
 \item finally, we map $H_{*+d\cdot\chi(W,X)}(M^W;R)\to H_{*+d\cdot\chi(W,X)}(M^Y;R)$ by an operation of type (1).
\end{itemize}

It turns out that the operations defined in this way do not depend on the actual cell decomposition of $(W,X)$, but only on the relative homotopy type of the pair $(W,X)$. 
In order to prove this, and in order to be able to define ``higher string operations'', we would like to assemble the collection of all graph cobordisms from $Y$ to $X$ into a \emph{moduli space} $\fM_\Gr(Y,X)$. Intuitively, deformations of graph cobordisms (corresponding to paths in $\fM_\Gr(Y,X)$) should arise by changing continously the attaching maps of 1-cells, or the map $Y\to W$; or even by suitably modifying the relative cell structure of $(W,X)$, though without affecting the relative homotopy type of this pair. The main desideratum for the space $\fM_\Gr(Y,X)$ is the following:
for a suitable local coefficient system $\xi_d^R$ on $\fM_\Gr(Y,X)$, the homology with local coefficients $H_*(\fM_\Gr(Y,X);\xi_d^R)$
should give rise to higher string operations $H_*(M^X;R)\to H_*(M^Y;R)$, possibly with homological degree shifts.

Instead of working with a single pair of spaces $X,Y$, we shall introduce an entire category $\GrCob$ of \emph{graph cobordisms between spaces}. We will define $\GrCob$ as a certain symmetric monoidal $\infty$-category; the reader unfamiliar with $\infty$-categories may think of $\GrCob$ as a category enriched in topological spaces. Objects of $\GrCob$ are given by spaces; and for spaces $X$ and $Y$ we will \emph{define} $\fM_\Gr(Y,X)$ as the morphism space from $Y$ to $X$ in $\GrCob$.

We shall also define a local coefficient system $\xi_d^R$ on each morphism space of $\GrCob$; these coefficient system satisfy a suitable compatibility condition, so that taking homology of morphisms spaces yields a new category $H_*(\GrCob,\xi_d^R)$: this is now a symmetric monoidal category enriched in $\Mod_R(\Ab^\Z)$, i.e. in the category of $\Z$-graded $R$-modules. The objects of $H_*(\GrCob,\xi_d^R)$ are again spaces, and the morphism object from $X$ to $Y$ is the graded $R$-module $H_*(\fM_\Gr(X,Y);\xi_d^R)$.

A formulation of the main result of the article (see Corollary \ref{cor:A1}) is the existence of a symmetric monoidal functor $H_*(\GrCob,\xi_d^R)^\op\to \Mod_R(\Ab^\Z),$ of symmetric monoidal categories enriched in $\Mod_R(\Ab^\Z)$, that attains the value $H_*(M^X;R)\in \Mod_R(\Ab^\Z)$ at the object $X\in H_*(\GrCob,\xi_d^R)^\op$, and whose behaviour on morphism objects generalises the operations (1)-(3).

\subsection{Graph field theories}
\label{subsec:graphfieldtheories}
We will use the language of $\infty$-categories throughout the article, by which we will be able to focus on the essence of the construction, and also to achieve a notable improvement of the above mentioned result:
\begin{itemize}
\item $R\in\CAlg(\Sp)$ will be a generic $E_\infty$-ring spectrum;
\item $M$ will be a generic oriented $R$-Poincar\'e duality space;
\item the above functor will be constructed at the ``chain level'', i.e. between suitable $\infty$-categories enriched in $\Mod_R:=\Mod_R(\Sp)$;
\item any oriented homotopy equivalence $M\simeq M'$ of Poincar\'e duality spaces will yield a natural equivalence between the corresponding functors; in particular, all constructed higher string operations are homotopy invariant.
\end{itemize}
\begin{rem}
All constructions of this article are natural in $R$ with respect to maps of $E_\infty$-ring spectra; in fact one can also replace $\Mod_R$ by a general presentably symmetric monoidal stable $\infty$-category $\cC$, and require that $M$ be an oriented $\cC$-Poincar\'e duality space. We refrain from discussing this level of generality, and fix $R$ once and for all throughout the article.
\end{rem}

In most constructions of the article, starting with the very definition of $\GrCob$,
in Section \ref{sec:loccoeffgraphcobspaces}, 
we rely on Theorem \ref{thm:BarkanSteinebrunner},
a recent result of Barkan--Steinebrunner \cite{BarkanSteinebrunner}.
For $d\in\Z$ and for an $E_\infty$-ring spectrum $R$, we consider the first delooping $\bB\Pic(R)\in\CAlg(\cS)$ of the symmetric monoidal space $\Pic(R)\subset\Mod_R$, and we construct a specific symmetric monoidal functor $\xi_d^R\colon\GrCob\to \bB\Pic(R)$, sending a graph cobordism $Y\to W\hookleftarrow X$ to an $R$-module which is non-canonically isomorphic to $R[d\cdot\chi(W,X)]$.
This will be our ``compatible choice'' of coefficient systems on the morphism spaces of $\GrCob$, see Remark \ref{rem:compatibleloccoef}.
When $R$ is a discrete ring, the coefficient system $\xi_d^R$ on the space $\fM_\Gr(X,Y)$ agrees with the one associating with a graph cobordism $Y\to W\hookleftarrow X$ the (suitably shifted) determinant of the relative homology $H_*(W,X;R)$, considered as a finitely generated, free $R$-module: see Proposition \ref{prop:xihatxi} for details, and Example \ref{ex:xifromliterature} for other instances of the functor $\xi_d^R$ from the literature when $R$ is a discrete commutative ring.

The previous discussion allows us to define a new symmetric monoidal category, denoted $\GrCob_!\xi_d^R$ and enriched in the symmetric monoidal $\infty$-category $\Mod_R$: the objects of $\GrCob_!\xi_d^R$ are still spaces as for $\GrCob$, and we have an equivalence of $R$-modules as follows, giving a description of morphism objects in $\GrCob_!\xi_d^R$:
\[
\GrCob_!\xi_d^R\ (Y,X)\simeq\fM_\Gr(Y,X)_!\xi_d^R\simeq\colim_{\fM_\Gr(Y,X)}\xi_d^R\in\Mod_R.
\]

Taking homotopy groups of morphism modules, we may define yet another category $H_*(\GrCob;\xi_d^R)$; the latter is a symmetric monoidal category enriched in $\Mod_{\pi_*(R)}(\Ab^\Z)$, the category of graded modules over the discrete graded ring $\pi_*(R)$. Objects of $H_*(\GrCob;\xi_d^R)$ are again spaces, and the morphism object from $Y$ to $X$ is the graded module given by the homology with twisted coefficients $H_*(\fM_\Gr(Y,X);\xi_d^R)$. The construction of these enriched symmetric monoidal categories is done in Section \ref{sec:GFTfunctor}, following the theory of Gepner--Haugseng \cite{GepnerHaugseng}. The main result of the article is the following.

\begin{athm}
 \label{thm:A}
Let $R$ be an $E_\infty$-ring spectrum and let $M$ be an oriented $R$-Poincar\'e duality space of dimension $d\ge0$. Then there is a symmetric monoidal $R$-linear functor, which we refer to as a ``graph field theory''
\[
 \GFT_M\colon(\GrCob_!\xi_d^R)^\op\to\Mod_R
\]
satisfying the following properties:
\begin{itemize}
 \item[(i)] $\GFT_M$ sends a space $X$ to $M^X\otimes R$;
 \item[(ii)] $\GFT_M$ recovers the basic operations (1)-(3) from Subsection \ref{subsec:basic} at the level of homology.
\end{itemize}
In particular, for any spaces $X$ and $Y$ we obtain a morphism in $\Mod_R$
\[
 (M^X\otimes R)\otimes_R (\colim_{\fM_\Gr(Y,X)}\xi_d^R)\to M^Y\otimes R.
\]
The construction is natural with respect to oriented homotopy equivalences of oriented $R$-Poincar\'e duality spaces.
\end{athm}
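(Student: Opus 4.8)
The plan is to build $\GFT_M$ as a composite, exploiting the concrete model for $\GrCob$ provided by Theorem~\ref{thm:BarkanSteinebrunner}. First I would note that $X\mapsto M^X\otimes R=\Sigma^\infty_+\map(X,M)\otimes R$ underlies a symmetric monoidal functor $\cS^\op\to\Mod_R$: indeed $\map(-,M)$ sends coproducts of spaces to products, and $\Sigma^\infty_+(-)\otimes R$ is symmetric monoidal. This produces the ``right-way'' part of $\GFT_M$ and accounts for the operations of type (1). Next, a graph cobordism $\phi=(Y\to W\hookleftarrow X)$ with $0$-cells $V$ and $1$-cells $E$ gives a span $\map(X,M)\xleftarrow{i^*}\map(W,M)\to\map(Y,M)$, and the backward leg $i^*$ decomposes, according to the relative CW structure, into two elementary types: the projection $\map(A,M)\times M\to\map(A,M)$ coming from attaching a $0$-cell, and the pullback $\map(A,M)\times_{M\times M}M\to\map(A,M)$ along the diagonal $\Delta_M\colon M\to M\times M$ coming from attaching a $1$-cell. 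I would package all graph cobordisms into an $\infty$-category of such ``$M$-structured spans'' and, using the universal property furnished by Theorem~\ref{thm:BarkanSteinebrunner}, produce a symmetric monoidal functor $\GrCob^\op\to\{M\text{-structured spans}\}$, $X\mapsto\map(X,M)$.

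The heart of the argument is Poincar\'e duality for $M$, which turns $M$-structured spans into morphisms of $R$-modules. Since $M$ is an oriented $R$-Poincar\'e duality space of dimension $d$, the $R$-module $M\otimes R$ is dualizable, its orientation line $\fe_M$ is trivialized, i.e. identified with $R[d]$ via the fundamental class $\fe_M\to M\otimes R$, and the diagonal $\Delta_M$ admits a Gysin (Umkehr) map $(M\otimes R)\otimes_R(M\otimes R)\to (M\otimes R)\otimes_R\fe_M^{-1}$ that is compatible with arbitrary base change. Applying these data fibrewise over mapping spaces, a $0$-cell contributes a map $(M^A\otimes R)\otimes_R\fe_M\to M^{A\sqcup *}\otimes R$ (cross product with the fundamental class), and a $1$-cell contributes a map $(M^A\otimes R)\otimes_R\fe_M^{-1}\to M^{A\cup I}\otimes R$ obtained by base-changing the diagonal Gysin map along the relevant evaluation map. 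Composing the Gysin map of the backward leg with the pushforward along the forward leg, each graph cobordism $\phi$ determines a morphism $\xi_d^R(\phi)\otimes_R(M^X\otimes R)\to M^Y\otimes R$, where the orientation of $M$ is used to identify the twist, assembled from the $\fe_M^{\pm1}$ contributed by the cells, with the value at $\phi$ of the $M$-independent functor $\xi_d^R$ constructed earlier, which is non-canonically $R[d\cdot\chi(W,X)]$. That this assignment is natural in $\phi$ is precisely the statement that it extends to a symmetric monoidal functor $\{M\text{-structured spans}\}\to\Mod_R$; precomposing with $X\mapsto\map(X,M)$ and passing to the $\xi_d^R$-linearisation yields $\GFT_M\colon(\GrCob_!\xi_d^R)^\op\to\Mod_R$ with $\GFT_M(X)\simeq M^X\otimes R$, which is property (i), and the displayed morphism in the theorem is then just the value of $\GFT_M$ on a morphism module, via the tensor--hom adjunction.

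For property (ii): operation (1) is the forward pushforward by construction; operation (2) is the $0$-cell Gysin map, which is cross product with $[M]_R$ by the very definition of the coevaluation; and operation (3) is the base change of the diagonal Gysin map, which one identifies with the Chas--Sullivan operation of Subsection~\ref{subsec:basic} by recalling that the $R$-Poincar\'e-duality Gysin map for $\Delta_M$ is, when $M$ is a manifold, exactly the Thom-isomorphism construction there. Finally, naturality in $M$ is automatic: an oriented homotopy equivalence $M\simeq M'$ induces compatible equivalences $\map(X,M)\simeq\map(X,M')$ for all $X$ and, being oriented, matches fundamental classes and hence all the Gysin data, so the whole construction is functorial on the $\infty$-groupoid of oriented $R$-Poincar\'e duality spaces and oriented equivalences, yielding a natural equivalence $\GFT_M\simeq\GFT_{M'}$ of symmetric monoidal functors.

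The main obstacle is the coherence in the middle step: one must check that the Gysin maps attached to $0$- and $1$-cells assemble into a genuine symmetric monoidal functor over the moduli spaces $\fM_\Gr(Y,X)$ --- that they are invariant under change of relative CW structure within a fixed relative homotopy type, compose correctly under the gluing of cobordisms, and satisfy the Beck--Chevalley compatibilities relating Gysin maps to restriction maps. This is exactly where the $\infty$-categorical formalisms of Barkan--Steinebrunner and Gepner--Haugseng do the work: they repackage these coherences so that they reduce to a bounded list of identities, each a standard manifestation of the stability and base-change properties of Poincar\'e duality for $M$.
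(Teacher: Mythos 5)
Your plan starts from the right intuition (use the Poincar\'e duality of $M$ to turn the backward legs of the cospans into Gysin maps, and use Barkan--Steinebrunner's universal property to discharge coherence), and the formulas you give for the $0$-cell and $1$-cell contributions match what the paper eventually produces. But there is a genuine gap in the middle step, and it is not the kind that the cited machinery fills automatically.

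Theorem~\ref{thm:BarkanSteinebrunner} gives a universal property for $\Gr$, the category of graph cobordisms \emph{between finite sets}: it identifies $\Fun^\otimes(\Gr,-)$ with the moduli of commutative Frobenius algebras. It does \emph{not} give a universal property for $\GrCob$ (graph cobordisms between arbitrary spaces). There is no result in the paper, or in \cite{BarkanSteinebrunner}, that lets you write down a symmetric monoidal functor $\GrCob^\op\to\{M\text{-structured spans}\}$ by specifying data at finite sets. So the sentence ``using the universal property furnished by Theorem~\ref{thm:BarkanSteinebrunner}, produce a symmetric monoidal functor $\GrCob^\op\to\{M\text{-structured spans}\}$'' asserts more than the theorem gives, and the closing paragraph's appeal to ``the $\infty$-categorical formalisms of Barkan--Steinebrunner and Gepner--Haugseng'' as if they ``repackage these coherences so that they reduce to a bounded list of identities'' misattributes the difficulty: Gepner--Haugseng is enrichment bookkeeping, and Barkan--Steinebrunner only speaks about $\Gr$. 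The actual coherence work remains to be done by hand.

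What the paper does to bridge that gap is not cosmetic. It first uses the universal property of $\Gr$ to produce a commutative Frobenius algebra $\Mod_R^M$ in the $\inftwo$-category $\SPrL_R$ (the object $\Psi_R(M)$, with trace $p_*$), and proves this is a Frobenius algebra via explicit Beck--Chevalley computations relying on Poincar\'e duality (Lemmas~\ref{lem:feisleftadjoint} and~\ref{lem:BCinvertible}). Separately it has a span-of-spaces functor $\Phi_R(M)$ defined directly on all of $\Cospan(\cS)$ (no Poincar\'e duality needed). To get a functor on $\GrCob$, it cannot use either alone: $\Psi_R(M)$ only sees finite sets, and $\Phi_R(M)$ has no Gysin maps. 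Instead it defines $\Gr_\cS$ as a pullback of $\Gr$ and $\Cospan(\cS)$, concatenates a unital enhancement $\hPsi_R(M)$ of $\Psi_R(M)$ with the $\Phi$-side functor in a lax comma $\inftwo$-category (this is $Z_3$), and then proves --- as a lemma, not by appealing to a universal property --- that $Z_3$ inverts idle morphisms, so descends to $\GrCob$ (Lemma~\ref{lem:Zthreefactors}). The comparison $\Psi_R(M)\otimes\ell\xi_d^R\simeq\Phi_R(M)$ (Proposition~\ref{prop:PsitwistedPhi}) is exactly the coherence statement your sketch needs and takes for granted; proving it is where the two applications of Poincar\'e duality and the Beck--Chevalley identities get done. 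In short: your outline identifies the correct ingredients, but replaces the central construction of the paper (the $\Gr_\cS$-level concatenation and descent) with an appeal to a universal property that does not exist for $\GrCob$. The fix would be precisely the detour through $\Gr_\cS$.
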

The proof of Theorem \ref{thm:A} up to property (ii) is achieved in Section \ref{sec:GFTfunctor}, relying on several constructions from Section \ref{sec:ModRPoincare}. Property (ii) is then established in Section 
\ref{sec:CSproduct}: this final step will in particular rely on the formalism of \cite{BHKK}, allowing one to study relative Poincar\'e duality via parametrised $R$-modules in spectra.

The last statement of Theorem \ref{thm:A} implies in particular that an oriented homotopy equivalence $f\colon M\simeq M'$ between $R$-oriented closed manifolds induces an $R$-linear symmetric monoidal equivalence $\GFT_M\cong\GFT_{M'}$ evaluating $f^X\otimes R$ at $X$. This gives in particular an alternative proof of the homotopy invariance of the Chas--Sullivan string product \cite{CohenKleinSullivan, Crabb, GruherSalvatore}.

Theorem \ref{thm:A} solves \cite[Conjecture 3.18]{HepworthLahtinen}.
By taking homotopy groups of morphism objects, we obtain the following corollary, dealing with \emph{plain} categories enriched in graded $\pi_*(R)$-modules in abelian groups (for $R$ a discrete ring, this simplifies to ``graded $R$-modules in abelian groups'').
\begin{acor}
\label{cor:A1}
In the hypotheses of Theorem \ref{thm:A}, there is a symmetric monoidal $\pi_*(R)$-linear functor $\GFT'_M\colon H_*(\GrCob;\xi_d^R)^\op\to\Mod_{\pi_*(R)}(\Ab^\Z)$ sending a space $X$ to $H_*(M^X;R)$ and recovering the operations (1)-(3) from Subsection \ref{subsec:basic}.In particular, for spaces $X$ and $Y$, we obtain higher homology string operations
\[
 H_*(M^X;R)\otimes_{\pi_*(R)} H_*(\fM_\Gr(Y,X);\xi_d^R)\to H_*(M^Y;R).
\]
All these higher string operations are invariant under oriented homotopy equivalences of oriented $R$-Poincar\'e duality spaces.
\end{acor}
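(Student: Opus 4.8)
The plan is to deduce Corollary~\ref{cor:A1} from Theorem~\ref{thm:A} by a purely formal change-of-enrichment argument, the only external input being that the homotopy-group functor
\[
\pi_*\colon\Mod_R\longrightarrow\Mod_{\pi_*(R)}(\Ab^\Z)
\]
— where $\Mod_{\pi_*(R)}(\Ab^\Z)$, the ordinary category of $\Z$-graded $\pi_*(R)$-modules, is regarded as an $\infty$-category — is lax symmetric monoidal. Its lax structure maps are the Künneth maps $\pi_*(A)\otimes_{\pi_*(R)}\pi_*(B)\to\pi_*(A\otimes_R B)$, its unit map is the identity of $\pi_*(R)$, and compatibility with the Koszul signs governing the symmetric monoidal structure on graded modules holds because $R$ is $E_\infty$, so that $\pi_*(R)$ is graded-commutative. (When $R$ is discrete this reduces to the evident functor $\Mod_R\to\Mod_R(\Ab^\Z)$.)

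First I would invoke the functoriality of enriched $\infty$-categories under change of enrichment along a lax symmetric monoidal functor, in the form developed by Gepner--Haugseng~\cite{GepnerHaugseng}: applied to $\pi_*$, it yields a functor between $\infty$-categories of enriched symmetric monoidal categories which preserves symmetric monoidal structures, sends symmetric monoidal enriched functors to symmetric monoidal enriched functors, and commutes with passage to opposites. By the construction carried out in Section~\ref{sec:GFTfunctor}, the $\Mod_{\pi_*(R)}(\Ab^\Z)$-enriched symmetric monoidal category $H_*(\GrCob;\xi_d^R)$ is precisely the change of enrichment of $\GrCob_!\xi_d^R$ along $\pi_*$: its morphism object from $Y$ to $X$ is $\pi_*\bigl(\colim_{\fM_\Gr(Y,X)}\xi_d^R\bigr)=H_*(\fM_\Gr(Y,X);\xi_d^R)$, with composition induced by the Künneth maps. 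Applying the same change of enrichment to $\Mod_R$, regarded as self-enriched, and composing with the canonical $\Mod_{\pi_*(R)}(\Ab^\Z)$-enriched symmetric monoidal functor to $\Mod_{\pi_*(R)}(\Ab^\Z)$ — given on objects by $A\mapsto\pi_*(A)$ and on morphism objects by the composite of $\pi_*$ of the evaluation pairing with a Künneth map — produces the desired target. I would then set $\GFT'_M$ equal to the composite of the change of enrichment of $\GFT_M$ with this last functor; it is automatically $\pi_*(R)$-linear, with symmetric monoidal structure maps induced by the Künneth maps.

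It then remains only to record the listed properties, each of which is immediate. On objects, $\GFT'_M(X)=\pi_*(\Sigma^\infty_+M^X\otimes R)=H_*(M^X;R)$ by property~(i) of Theorem~\ref{thm:A} together with the definition of $R$-homology of a space. The asserted higher homology string operations
\[
H_*(M^X;R)\otimes_{\pi_*(R)}H_*(\fM_\Gr(Y,X);\xi_d^R)\to H_*(M^Y;R)
\]
are exactly the morphism-object components of $\GFT'_M$, equivalently what one obtains by applying $\pi_*$ to the morphism $(M^X\otimes R)\otimes_R(\colim_{\fM_\Gr(Y,X)}\xi_d^R)\to M^Y\otimes R$ of Theorem~\ref{thm:A} and precomposing with the Künneth map. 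The recovery of the operations (1)--(3) of Subsection~\ref{subsec:basic} follows from property~(ii) of Theorem~\ref{thm:A}, which already phrases those operations as $\pi_*$ of the corresponding morphisms of $\Mod_R$ produced by $\GFT_M$; and invariance under oriented homotopy equivalences of oriented $R$-Poincar\'e duality spaces follows from the last clause of Theorem~\ref{thm:A} and the functoriality of change of enrichment.

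The argument is formal and I expect no conceptual difficulty; the real work is the bookkeeping required to check that the machinery of~\cite{GepnerHaugseng} simultaneously delivers all the claimed structure (symmetric monoidality, $\pi_*(R)$-linearity, compatibility with opposites, naturality in $M$), and that the self-enriched category $\Mod_{\pi_*(R)}(\Ab^\Z)$ and the graded Künneth pairing are set up so that the composite genuinely lands in $\Mod_{\pi_*(R)}(\Ab^\Z)$ with the intended graded-module structure on morphism objects, rather than merely in the homotopy category of $\Mod_R$ with graded morphism groups.
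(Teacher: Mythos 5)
Your proposal is correct and coincides with the paper's argument: the paper itself dispatches Corollary~\ref{cor:A1} in one sentence by observing that $\pi_*\colon\Mod_R\to\Mod_{\pi_*(R)}(\Ab^\Z)$ is lax symmetric monoidal and changing enrichment along it, which is precisely the mechanism you spell out in detail.
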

We can in fact interpret the homotopy invariance in the last statement of Theorem \ref{thm:A} more systematically as follows. For a given oriented $R$-Poincar\'e duality space $M$, we can consider the classifying space $\bB\hAut^+(M)$ of orientation-preserving homotopy automorphisms of $M$; see Subsection \ref{subsec:hAut+} for details. We can then upgrade Theorem \ref{thm:A} to a map of spaces/$\infty$-groupoids
\[
\bB\hAut^+(M)\to\Fun^{\otimes,R}((\GrCob\otimes\xi_d^R)^\op,\Mod_R)^\simeq.
\]
Postcomposing with the evaluation at the endomorphisms of the monoidal unit $\emptyset$, we obtain the following corollary, which is originally due Berglund in the case $R=\Q$, and to Barkan--Steinebrunner in the general case, see Remark \ref{rem:historical2}
\begin{acor}[Berglund, Barkan--Steinebrunner]
\label{cor:A2}
In the hypotheses of Theorem \ref{thm:A} there is a natural $R$-linear pairing
\[
(\bB\hAut^+(M)\otimes R)\otimes_R(\colim_{\fM_\Gr(\emptyset,\emptyset)}\xi_d^R)\to R.
\]
Applying the hom-tensor adjunction and then passing to homotopy groups, we obtain a $\pi_*(R)$-linear map
\[
H_*(\fM_\Gr(\emptyset,\emptyset);\xi_d^R)\to H^*(\bB\hAut^+(M);R).
\]
\end{acor}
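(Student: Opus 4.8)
The plan is to deduce Corollary \ref{cor:A2} directly from Theorem \ref{thm:A} together with its naturality in $M$, by evaluating the graph field theory functor at the monoidal unit $\emptyset\in(\GrCob_!\xi_d^R)^\op$ and then assembling over homotopy automorphisms of $M$. First I would observe that, since $\GFT_M$ is symmetric monoidal and $R$-linear, it takes the monoidal unit $\emptyset$ to the monoidal unit $R\in\Mod_R$, and the space $M^\emptyset=\map(\emptyset,M)\simeq *$ is a point, so $\GFT_M(\emptyset)\simeq \Sigma^\infty_+(*)\otimes R\simeq R$ coherently. Restricting $\GFT_M$ to the endomorphism $E_\infty$-algebra of $\emptyset$ then yields, after the identification $\GrCob_!\xi_d^R(\emptyset,\emptyset)\simeq\colim_{\fM_\Gr(\emptyset,\emptyset)}\xi_d^R$ from the description of morphism modules in $\GrCob_!\xi_d^R$, an $R$-linear map $(\colim_{\fM_\Gr(\emptyset,\emptyset)}\xi_d^R)\otimes_R R\to R$, i.e. a map $\colim_{\fM_\Gr(\emptyset,\emptyset)}\xi_d^R\to R$ (note $(\GrCob_!\xi_d^R)^\op$ has the same endomorphisms of $\emptyset$ as $\GrCob_!\xi_d^R$, so the variance does not affect this self-pairing).

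Next I would incorporate the naturality statement. By the upgrade of Theorem \ref{thm:A} to a functor of $\infty$-groupoids $\bB\hAut^+(M)\to\Fun^{\otimes,R}((\GrCob_!\xi_d^R)^\op,\Mod_R)$ alluded to in the excerpt, the construction $M\mapsto\GFT_M$ depends functorially on the orientation-preserving homotopy automorphisms of $M$. Postcomposing this with the evaluation functor $\Fun^{\otimes,R}((\GrCob_!\xi_d^R)^\op,\Mod_R)\to\Mod_R$ at the object $\emptyset$, and using the canonical trivialisation $\GFT_M(\emptyset)\simeq R$ which is itself $\hAut^+(M)$-equivariant (the action of a homotopy automorphism $f$ on $M^\emptyset\simeq *$ is canonically trivial), we obtain a local system on $\bB\hAut^+(M)$ with value $R$ which is in fact \emph{constant}; meanwhile the morphism-module map above becomes an $\hAut^+(M)$-equivariant map $\colim_{\fM_\Gr(\emptyset,\emptyset)}\xi_d^R\to R$. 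Taking the colimit over $\bB\hAut^+(M)$ — equivalently, tensoring the equivariant map with $\Sigma^\infty_+\bB\hAut^+(M)\otimes R$ over $R$ — produces the desired $R$-linear pairing
\[
(\bB\hAut^+(M)\otimes R)\otimes_R\Bigl(\colim_{\fM_\Gr(\emptyset,\emptyset)}\xi_d^R\Bigr)\to R.
\]

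Finally, for the second displayed map in the statement I would apply the hom-tensor adjunction in $\Mod_R$ to rewrite the pairing as a map $\colim_{\fM_\Gr(\emptyset,\emptyset)}\xi_d^R\to\map_R(\bB\hAut^+(M)\otimes R,R)$, i.e. a map of $R$-modules from the twisted chains on $\fM_\Gr(\emptyset,\emptyset)$ to the $R$-linear dual of $\Sigma^\infty_+\bB\hAut^+(M)\otimes R$, which is the cochain object computing $H^*(\bB\hAut^+(M);R)$. Passing to homotopy groups, and using that the homotopy groups of $\colim_{\fM_\Gr(\emptyset,\emptyset)}\xi_d^R$ compute the twisted homology $H_*(\fM_\Gr(\emptyset,\emptyset);\xi_d^R)$ while the homotopy groups of the dual compute $H^*(\bB\hAut^+(M);R)$ (here one uses that $R$-module maps out of a colimit of Picard-graded $R$-modules are accessible to a universal-coefficient argument), gives the $\pi_*(R)$-linear map
\[
H_*(\fM_\Gr(\emptyset,\emptyset);\xi_d^R)\to H^*(\bB\hAut^+(M);R)
\]
as claimed.

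The main obstacle I anticipate is not any single deep computation but rather the careful bookkeeping of the symmetric monoidal and equivariant coherences: one must know that the upgrade of $\GFT_M$ to a $\bB\hAut^+(M)$-indexed family of symmetric monoidal functors is genuinely available at the level of $\infty$-categories (this is the content of the ``upgrade'' remark and ultimately rests on the naturality clause of Theorem \ref{thm:A}), and one must check that evaluation at $\emptyset$ together with the unit identifications is compatible with all of this structure — in particular that the resulting local system on $\bB\hAut^+(M)$ is the \emph{constant} one, so that its colimit really is $\Sigma^\infty_+\bB\hAut^+(M)\otimes R$ rather than a twisted variant. Once these coherence points are settled, the rest is the formal manipulation with adjunctions and the passage to homotopy groups sketched above.
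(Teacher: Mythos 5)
Your proposal is correct and follows essentially the same route as the paper: evaluate the family $\GFT_{(-)}$ at the unit $\emptyset$ to see that its value $M^\emptyset\otimes R\simeq R$ is constant over the moduli of oriented Poincaré duality spaces, then evaluate at the endomorphism module $\GrCob_!\xi_d^R(\emptyset,\emptyset)\simeq\colim_{\fM_\Gr(\emptyset,\emptyset)}\xi_d^R$ to produce a map $\bB\hAut^+(M)\to\Mod_R(\colim\xi_d^R,R)$, and adjoint this into the pairing. The only cosmetic difference is that the paper works with the full moduli space $\fM_{\Poin,d}^{R,+}$ and restricts to the component at $M$ at the very end, whereas you work with that component $\bB\hAut^+(M)$ from the start; the underlying argument, including the crucial observation that the unit local system is constant, is identical.
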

To interpret concretely the result of Corollary \ref{cor:A2}, we anticipate that $\fM_\Gr(\emptyset,\emptyset)$ contains, for all $n\ge0$, a connected component that is homotopy equivalent to $\bB\Out(F_n)$, the classifying space of the group of outer automorphism of a free group of rank $n$; this will follow from Corollary \ref{cor:GrinGrCob}. Hence Corollary \ref{cor:A2} gives in particular a map $H_*(\bB\Out(F_n);\xi_d^R)\to H^*(\bB\hAut^+(M);R)$.
For $R=\Q$, a similar map
\[
H_*(\bB\Out(F_n);\xi_d^\Q)\to H^*(\bB\hAut_\partial^+(M\setminus \mathring{D}^d);\Q)
\]
has been constructed in the cases in which $M$ is a connected sum of $g\ge0$ copies of $S^k\times S^k$ \cite{BerglundMadsen}, or a connected sum of $g$ copies of $S^k\times S^l$ with $k\neq l$ \cite{Stoll}.
\begin{rem}[A remark on attributions]
\label{rem:historical2}
Before the beginning of this project, Berglund has announced a generalisation of the results \cite{BerglundMadsen,Stoll}: through a \emph{generalized fiber integration map}, he has originally claimed the result of Corollary \ref{cor:A2} for $R=\Q$ and $M$ simply connected \cite{Berglund}. Slightly thereafter, Barkan--Steinebrunner \cite{BarkanSteinebrunner} have claimed the entire statement of Corollary \ref{cor:A2}; their proof is quite parallel to ours, the main difference being that we obtain Corollary \ref{cor:A2} as an immediate corollary of Theorem \ref{thm:A}, while Barkan--Steinebrunner prove it directly. 

In particular, the map $(\bB\hAut^+(M)\otimes R)\otimes_R(\colim_{\fM_\Gr(\emptyset,\emptyset)}\xi_d^R)\to R$ constructed by Barkan--Steinebrunner agrees with ours, and so do the characteristic classes of oriented $R$-Poincar\'e fibrations obtained from the twisted homology of $\bB\Out(F_n)$. Barkan--Steinebrunner are also able to prove that their characteristic classes agree with Berglund's ones (when the latter are defined) in even degrees; conjecturally this should hold in all degrees.
\end{rem}

\subsection{Morphism spaces in \texorpdfstring{$\GrCob$}{GrCob} and open-closed field theories}
The statement of Theorem \ref{thm:A} invites us to understand the homotopy type of the morphism spaces of $\GrCob$; as this $\infty$-category is introduced as a localisation of another $\infty$-category $\Gr_\cS$, the description of morphism spaces is not immediate.
We will give in Section \ref{sec:GrCobspaces} a relatively simple description of morphism spaces in $\GrCob$ which genuinely resembles Definition \ref{defn:graphcob}; building on this, we will in particular prove the following theorem.
\begin{athm}
\label{thm:B}
Let $\Cospan(\cS)$ denote the symmetric monoidal $\infty$-category of cospans of spaces.
Then the canonical functor $D_0\colon\GrCob\to\Cospan(\cS)$ (see Remark \ref{rem:D0localises} for details)
satisfies the following properties:
\begin{enumerate}
\item the induced map on core groupoids $\GrCob^\simeq\xrightarrow{\simeq}\Cospan(\cS)^\simeq$ is an equivalence; in particular we have $\GrCob^\simeq\simeq\cS^\simeq$;
\item for all $X,Y\in\cS$ with $X$ aspherical (i.e. $\pi_i(X)=0$ for all $i\ge2$ and all choices of basepoint), the induced map on morphism spaces
\[
\fM_\Gr(Y,X)\to\Cospan(\cS)(Y,X)
\]
exhibits the source as the subspace of the target spanned by those cospans $Y\to W\ot X$ such that $W$ can be obtained from $X$ by attaching finitely many cells of dimension at most 1.
\end{enumerate}
In particular $D_0$ restricts to an inclusion of $\infty$-categories $\GrCob_\asph\hto\Cospan(\cS)$, where $\GrCob_\asph\subseteq\GrCob$ denotes the symmetric monoidal full $\infty$-subcategory spanned by aspherical spaces.
\end{athm}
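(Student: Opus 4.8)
\emph{Proof proposal.} The plan is to first render the morphism spaces of $\GrCob$ completely explicit — this is the substance of Section \ref{sec:GrCobspaces} — and then to compare this description, one leg at a time, with the moduli spaces of cospans in $\cS$. Concretely, I would use that $\fM_\Gr(Y,X)$ is equivalent to a homotopy colimit, taken over a space (equivalently, a topological category) of \emph{cellular data} $(V,E,\sigma)$ exactly as in Definition \ref{defn:graphcob}, of the mapping spaces $\map(Y,W_{V,E,\sigma})$, where $W_{V,E,\sigma}$ denotes the associated pushout; under this identification $D_0$ is the map remembering only the resulting cospan $Y\to W_{V,E,\sigma}\hookleftarrow X$ and forgetting the cellular data. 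On the target side, $\Cospan(\cS)(Y,X)$ is the moduli space of diagrams $Y\to Z\hookleftarrow X$ in $\cS$, that is, the maximal $\infty$-subgroupoid of $\Fun(\{0\to 2\leftarrow 1\},\cS)$ on diagrams with these prescribed endpoints.

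For part (1) I would combine this model with the standard description of the groupoid core of a cospan category: since $\cS$ admits finite colimits, a cospan is invertible in $\Cospan(\cS)$ precisely when both of its legs are equivalences, so $\Cospan(\cS)^\simeq\simeq\cS^\simeq$. On the source side, a morphism of $\fM_\Gr(Y,X)$ becomes invertible exactly when its underlying cospan does, which forces the inclusion $W_{V,E,\sigma}\hookleftarrow X$ to be an equivalence; the moduli space of such ``homotopically trivial'' cellular data is contractible by the (easier) contractibility input described below, whence $\fM_\Gr(Y,X)^\simeq\simeq\map(Y,X)^\simeq$ and $\GrCob^\simeq\simeq\cS^\simeq$, with $D_0$ inducing the identity on cores.

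Part (2) is the heart of the argument. Fix $X$ aspherical and $Y$ arbitrary, and let $S\subseteq\Cospan(\cS)(Y,X)$ be the sub-$\infty$-groupoid of cospans $Y\to W\hookleftarrow X$ with $W$ obtained from $X$ by attaching finitely many cells of dimension at most $1$. That the image of $D_0$ on morphism spaces lies in $S$ is immediate, and surjectivity onto $S$ is equally immediate: such a $W$ comes by definition with at least one finite relative CW structure with cells of dimension $\le 1$, which is precisely a datum $(V,E,\sigma)$ realising the given cospan as a graph cobordism. It therefore remains to prove that $D_0\colon\fM_\Gr(Y,X)\to\Cospan(\cS)(Y,X)$ is a \emph{monomorphism} onto $S$, i.e. that its homotopy fibre over every cospan $c=(Y\xrightarrow{g} W\hookleftarrow X)$ in $S$ is contractible. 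Unwinding the colimit model, the part of this fibre carrying the map $g$ is a based path space and hence contractible once $W$ is fixed, so the fibre is equivalent to the moduli space of cellular data $(V,E,\sigma)$ presenting the pair $(W,X)$, taken modulo the identifications built into the localisation defining $\GrCob$. The crux is thus the assertion that \emph{this moduli space of finite relative CW structures with $\le 1$-cells on $(W,X)$ is contractible when $X$ is aspherical}. Here asphericity enters twice: first, a one-line van Kampen and Whitehead argument shows that $W$ — a disjoint union of $K(\pi,1)$'s with finitely many $0$- and $1$-cells attached, each such attachment effecting a free product with $\Z$, an amalgamated free product, or a homotopy equivalence — is itself aspherical; second, asphericity of both $X$ and $W$ identifies the moduli space with a space of marked graphs of aspherical spaces — equivalently, of graph-of-groups decompositions of $\pi_1(W)$ with the $\pi_1$ of the components of $X$ as vertex groups, trivial edge groups, and the marking prescribed — whose contractibility is an ``outer space'' type theorem in the tradition of Culler--Vogtmann and Guirardel--Levitt, and is where the explicit analysis of Section \ref{sec:GrCobspaces} (and with it the answer to Hatcher's question) is brought to bear. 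I would carry this out by reducing component by component to connected $W$, matching the moduli space with such a space of markings together with its available moves (subdivisions, collapses of forest-like subgraphs, homotopies of attaching maps), and invoking the contractibility theorem.

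Finally, part (3) is formal: on aspherical objects $D_0$ is injective on equivalence classes of objects by part (1) and a monomorphism on every morphism space by part (2) — the source and the target of any morphism in $\GrCob_\asph$ both being aspherical — so $D_0$ realises $\GrCob_\asph$ as a (not necessarily full) subcategory of $\Cospan(\cS)$. I expect the main obstacle to be the contractibility statement inside part (2): one must first extract a genuinely usable model of the moduli space of graph-cobordism structures from the localisation presentation of $\GrCob$, and then match it precisely — markings, allowed moves, finiteness constraints — with a space covered by the known contractibility results. The asphericity hypothesis is essential rather than cosmetic: it is exactly what turns the homotopical moduli problem into a combinatorial one about graphs of groups, and without it distinct cellular presentations of a common higher homotopy type need not be connected by the available moves through a contractible family, so the homotopy fibres of $D_0$ would fail to be contractible.
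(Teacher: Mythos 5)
Your proposal is correct and takes essentially the same approach as the paper: both reduce to showing that the homotopy fibres of $D_0$ over cospans in $S$ are contractible, identify those fibres with spaces of marked graphs of aspherical pieces (dually, Bass--Serre trees with $\Gamma$-action), and invoke Guirardel--Levitt outer space contractibility, with asphericity playing exactly the two roles you identify. The technical steps you flag as the ``main obstacles'' are precisely what Section~\ref{sec:GrCobspaces} supplies --- a left calculus of fractions computing $\fM_\Gr(Y,X)$ as a colimit over $\Fin_{/X}$ (Proposition~\ref{prop:colimFinX}), the identification of that colimit with the classifying space of a combinatorial tree-collapse category $\bGr(X)$ (Lemma~\ref{lem:bGrX}), the passage to $\Gamma$-equivariant gafs over $\tilde X := X\times_{\bB\Gamma}*$, and a mild extension of Guirardel--Levitt's theorem to allow trivial factor groups and to treat the marked fixed vertices $v_i$ as data rather than as properties.
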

Theorem \ref{thm:B} is the combination of Corollary \ref{cor:GrCobcore} and Proposition \ref{prop:GuirardelLevitt}. 
\begin{rem}
For $X\in\cS$, the space $\fM_\Gr(\emptyset,X)$ was considered by Hatcher in a talk given in 2012 \cite{HatcherBanff}, via a different but equivalent construction. In this talk Hatcher asked a question that can be rephrased as follows: for which $X$ does $D_0$ induce an inclusion $\fM_\Gr(\emptyset,X)\hto(\cS_{X/})^\simeq$ on morphism spaces? Theorem \ref{thm:B} shows that this is the case when $X$ is aspherical; we will see instead in Example \ref{ex:Hatcher} that $\fM_\Gr(\emptyset,X\to(\cS_{X/})^\simeq$ is not an inclusion of spaces if $X=S^k$ is a sphere of dimension $k\ge2$, and we believe that the argument in this example can be generalised to any non-aspherical space. This solves Hatcher's question, notably in the positive for aspherical spaces. This also shows, however, that the entire functor $D_0\colon\GrCob\to\Cospan(\cS)^\simeq$ is not an inclusion of $\infty$-categories.
\end{rem}
\begin{rem}
As a simple and immediate application of Theorem \ref{thm:B}, we consider the symmetric monoidal $\infty$-category $\Cob_2^\del$ of cobordisms between oriented compact 1-manifolds with boundary, where cobordisms are oriented surfaces with boundary and corners. We further select the wide symmetric monoidal $\infty$-subcategory $\OC\subset\Cob_2^\del$ spanned by cobordisms whose handle dimension relative to the outgoing boundary is at most 1.\footnote{This condition is usually expressed by requiring that each component of the cobordism must have some incomining or free boundary.} 
The $\infty$-category $\OC$ has been extensively studied in the context of string topology and higher operations on Hochschild complexes of Frobenius algebras; an incomplete list of sources includes \cite{CostelloOC,Godin,Kaufmann3,WahlWesterland,WahlUniversal,BSZ}. According to Costello \cite{CostelloOC}, $\OC$ was first introduced by Segal \cite{SegalStanford} and Moore \cite{Moore}.

The canonical symmetric monoidal $\infty$-functor $\OC\to\Cospan(\cS)$, sending a cobordism of manifolds to the underlying cospan of spaces, evidently factors through $\GrCob_\asph^\op$, thus giving rise to a symmetric monoidal $\infty$-functor $\OC\to\GrCob^\op$. Composing the latter with $\GFT_M$, we obtain an open-closed field theory $\OC\to\Mod_R$ attaining values $\Sigma^\infty_+LM\otimes_\bS R$ and  $\Sigma^\infty_+M\otimes_\bS R$ at the circle and the interval, respectively.
This connection justifies our use of the terminology of ``graph cobordism'' and ``graph field theory'', which we take from \cite{HepworthLahtinen}.
The above open-closed field theory allows us to define string operations using the homology of moduli spaces of Riemann surfaces with boundary; we expect such operations to agree with analogous ones constructed so far in the literature \cite{CohenJones,CohenGodin,VoronovUniversalAlgebra,Chataur,CohenVoronov,Godin,PoirerRounds,DCPR,BSZ}.
\end{rem}

\subsection{Related work}
The results of this article generalise and complement other results in the literature; we make here a short and necessarily incomplete summary.

\begin{description}[style=unboxed,leftmargin=0cm]
\item[Classifying spaces of Lie groups] Hepworth--Lahtinen \cite{HepworthLahtinen} define similar category of graph cobordism, whose objects are graphs. They also associate with a compact Lie group $G$ a ``graph field theory'', attaining the value $H_*(\map(X,\bB G);\mathbb{F}_2)$ at the graph $X$. Our category $\GrCob$ is a variation of theirs. The main (parallel) differences are the following:
\begin{itemize}
\item we consider mapping spaces into $M$, while \cite{HepworthLahtinen} consider mapping spaces into $\bB G$; hence in their case it is the loop space of the target space, and not the target space itself, to be a Poincar\'e duality space.
\item in our graph cobordisms $Y\to W\hookleftarrow X$ we attach 0-cells and 1-cells to $X$; in \cite{HepworthLahtinen}, only 1-cells seem to be allowed (but we believe that an extended construction allowing 2-cells should be possible in their setting).
\end{itemize}
String operations for the homology of free loop spaces of classifying spaces of compact Lie groups have also been studied in \cite{ChataurMenichi}.
\item[Homotopy invariance] The invariance under homotopy equivalences of manifolds for string product and string bracket is proved in \cite{CohenKleinSullivan, Crabb, GruherSalvatore}. In this article we explicitly recover the invariance of the string product; similar arguments should recover the invariance of the string bracket.
\item[String operations for non-manifolds]
For an $R$-oriented $d$-dimensional Poinca\-r\'e duality space $M$, Klein \cite{Kleinfibre} defines an $E_1$-algebra structure on $\Sigma^{\infty}_+(LM)\otimes R[-d]$; this endows in particular $H_*(LM;R)$ with a string product, coinciding with the one of Chas--Sullivan when $M$ is a manifold, and gives an alternative proof of the homotopy invariance of the string product. Over the rationals, string operations for (triangulated) Poincar\'e duality spaces have been constructed in \cite{TradlerZeinalianSullivan}. 
String operations for orbifolds \cite{Lupercioetal}, Gorenstein spaces \cite{FelixThomas}, and differentiable stacks \cite{BGNX1,BGNX2} have also been constructed.
\item[Homology of moduli of Riemann surfaces] Higher string operations on the homology of the free loop space of a manifold have been extensively constructed using moduli spaces of Riemann surfaces, or various versions of chord diagrams; in some cases, lifts of these operations to the spectral/chain level are also provided, as well as generalisations to arbitrary multiplicative (co)homology theories \cite{CohenJones,CohenGodin,VoronovUniversalAlgebra,Chataur,CohenVoronov,Godin,PoirerRounds,DCPR,BSZ}
\item[String operations on Hochschild homology] Operations on the Hochschild (co)homology or the Hochschild (co)chains of (differentiable) graded (commutative, Frobenius) algebras, parametrised by various complexes of graphs and chord diagrams, have been studied by several authors \cite{FTVP,CostelloOC,TradlerZeinalian,TradlerZeinalian2,CostelloTradlerZeinalian,Kaufmann1,Kaufmann2,Kaufmann3,WahlWesterland,WahlUniversal}
\end{description}
\begin{rem}[The string coproduct]
Together with the string product, the string coproduct $H_{*+d-1}(LM,M)\to H_*((LM,M)^2)$ introduced by Goresky--Hingston \cite{GoreskyHingston} is arguably the most studied string operation in the literature. The string coproduct can be made into an non-relative operation $H_{*+d-1}(LM)\to H_*(LM^2)$ \cite{HingstonWahl}. It has originally been defined under the assumption that $M$ is a closed manifold, but it can in fact be defined also when $M$ is an oriented homology manifold \cite{RiveraTakeda}, and even when $M$ is an oriented Poincar\'e duality spaces endowed with a refinement of the diagonal $\Delta\colon M\to M\times M$ to a Poincar\'e embedding, by a straightforward generalisation of \cite[Section 4.3]{NaefSafronov}. Naef \cite{Naef} has proved that the string coproduct is in general \emph{not} invariant under homotopy equivalences of manifolds; see also the discussion in \cite{NaefRiveraWahl}. We mention however that the string coproduct is invariant under \emph{simple} homotopy equivalences \cite{NaefSafronov,KenigsbergPorcelli}; see also \cite{WahlInvariance}.
Combining Naef's result with the last statement of Theorem \ref{thm:A}, we deduce that $\GFT_M$ \emph{cannot possibly recover the string coproduct}.
\end{rem}

\subsection{Acknowledgments}
I would first like to thank Nathalie Wahl for introducing me to the world of string topology and for several comments and corrections related to a first version of the results of this article. I am further deeply indebted with Jan Steinebrunner for several discussions and explanations about his work in progress with Shaul Barkan, most crucially the statement of Theorem \ref{thm:BarkanSteinebrunner}, which has allowed for a dramatic simplification of the current article in comparison with its original draft (where we meant to use directly the $\inftwo$-categorical \cite[Theorem A]{Bianchi:graphcobset}).
I have moreover benefited from conversations with
Shaul Barkan,
Alexander Berglund,
Schachar Carmeli,
Bastiaan Cnossen, 
Johannes Ebert,
Allen Hatcher, 
Kaif Hilman, 
Dominik Kirstein, 
Christian Kremer, 
Anssi Lahtinen, 
Markus Land, 
Tobias Lenz, 
Sil Linskens, 
Florian Naef, 
Nils Prigge, 
Maxime Ramzi,
Manuel Rivera,
Peter Teichner and Adela Zhang.

This work was partially supported
by the
European Research Council under the European Union’s Horizon 2020 research and innovation
programme (grant agreement No. 772960), by the Danish National Research Foundation
through the Copenhagen Centre for Geometry and Topology (DNRF151), and by the Max Planck Institute for Mathematics in Bonn.

\section{Preliminaries}
\label{sec:preliminaries}
In this section we recollect most of the background material that we will use throughout the article. Some of the material has appeared in essentially the same form in \cite{Bianchi:graphcobset}, and is included here for the sake of completeness. We encourage the reader to proceed directly to the next section on a first reading of the article.

We fix three nested Grothendieck universes 
$\fU_1\subsetneq\fU_2\subsetneq\fU_3$, 
and we refer to objects whose size is bounded above by $\fU_1$, $\fU_2$ and $\fU_3$ as ``small'', ``large'' and ``huge'', respectively.
For $i=1,2$, an $\infty$-category is \emph{$\fU_i$-presentable} if it is $\fU_{i+1}$-small, admits $\fU_i$-small colimits, and is $\kappa$-accessible for some regular cardinal $\kappa<\fU_i$. We abbreviate ``$\fU_1$-presentable'' by ``presentable''.

\subsection{Spaces and \texorpdfstring{$\infty$}{infty}-categories}
For $i=1,2$ we denote by $\cS^{\fU_i}$ the $\fU_i$-presentable $\infty$-category of $\fU_i$-small $\infty$-groupoids; for $i=1$ we write $\cS=\cS^{\fU_1}$ and refer to it as the $\infty$-category of ``spaces''.
We will sometimes consider $\Top$, the (ordinary) category of compactly generated weakly Hausdorff topological spaces with $\fU_1$-small underlying set, and use ``topological space'', as opposed to just ``space'', to denote an object of $\Top$.

For $i=1,2$ we denote by $\Catinfty^{\fU_i}$ the $\fU_i$-presentable $\infty$-category of $\fU_i$-small $\infty$-categories. For $i=1$ we abbreviate $\Catinfty=\Catinfty^{\fU_1}$.
The fully faithful inclusion $\cS\hookrightarrow\Catinfty$ has a left adjoint $|-|\colon\Catinfty\to\cS$, called the \emph{classifying space functor}, and a right adjoint $(-)^\simeq\colon\Catinfty\to\cS$, called the \emph{core groupoid functor}.

For $\cC,\cD\in\Catinfty$ we denote by $\Fun(\cC,\cD)\in\Catinfty$ the (small) $\infty$-category of functors from $\cC$ to $\cD$ and natural transformations; it is the internal hom in $\Catinfty$, which is cartesian closed. The morphism \emph{space} from $\cC$ to $\cD$ in $\Catinfty$ is recovered as the core groupoid $\Fun(\cC,\cD)^\simeq$.

For $i=1,2$ we denote by $\CAlg(\Catinfty^{\fU_i})$ the $\fU_i$-presentable $\infty$-category of $\fU_i$-small symmetric monoidal $\infty$-categories, with morphisms given by symmetric monoidal $\infty$-functors. This can be equivalently defined as the category of commutative monoid objects \cite[Definition 2.4.2.1]{HA}, or of algebras over the $E_\infty$-operad, in $\Catinfty^{\fU_i}$: for this we use that $\Catinfty^{\fU_i}$ is considered as a \emph{cartesian} symmetric monoidal $\infty$-category: see \cite[Proposition 2.4.2.5]{HA}. We abbreviate $\CAlg(\Catinfty)=\CAlg(\Catinfty^{\fU_1})$.

For $\cC\in\CAlg(\Catinfty^{\fU_i})$ we usually denote by $\one\in\cC$ the monoidal unit and we denote by $-\otimes-\colon\cC\times\cC\to\cC$ the monoidal product.
When the monoidal structure on $\cC$ comes from a categorical coproduct, either on $\cC$ or on a closely related $\infty$-category, we will instead use the notations $\emptyset$ and $-\sqcup-$ for the monoidal unit and the monoidal product of $\cC$.
For $\cC,\cD\in\CAlg(\Catinfty^{\fU_i})$ we let $\Fun^\otimes(\cC,\cC')^\simeq\in\cS^{\fU_i}$ denote the corresponding morphism space in $\CAlg(\Catinfty^{\fU_i})$.

We denote by $\Fin$ the category of finite sets;
we regard $\Fin$ as a full $\infty$-subcatego\-ry of $\cS$.
For $k\ge0$ we denote by $\ul k\in \Fin$ the finite set $\set{1,\dots,k}$. We have $\emptyset=\ul 0$.
We usually endow $\Fin\in\Catinfty$ and $\cS\in\Catinfty^{\fU_2}$ with the coproduct symmetric monoidal structure; when $\cS$ is endowed with the product symmetric monoidal structure, we stress this by writing $(\cS,\times)$.

We denote by $[1]$ the category given by the linearly ordered set with two elements $0<1$; for an $\infty$-category $\cC$ we denote by $d_1,d_0\colon\Fun([1],\cC)\to\cC$ the source and target functor, respectively.

\subsection{Presheaves}
\label{subsec:presheaves}
For $\cC\in\Catinfty$ we let $\PSh(\cC):=\Fun(\cC^\op,\cS)$ denote the category of space-valued presheaves over $\cC$, and we denote by $\yo\colon\cC\to\PSh(\cC)$ the Yoneda embedding. For $\cC\in\CAlg(\Catinfty)$, the category $\PSh(\cC)$ is endowed with a symmetric monoidal structure $-\otimes_\Day-$ given by \emph{Day convolution}, which can be characterised by the following properties \cite[Corollary 4.8.1.12]{HA}:
\begin{itemize}
\item the Yoneda embedding can be promoted to a symmetric monoidal functor;
\item the monoidal product $\PSh(\cC)\times\PSh(\cC)\to\PSh(\cC)$ preserves colimits separately in each variable.
\end{itemize}
These properties give rise to the following formula: for objects $A\simeq\colim_{i\in I}\yo(x_i)$ and $B\simeq\colim_{j\in J}\yo(x'_j)$ in $\PSh(\cC)$, written as colimits of representable presheaves, we have an equivalence
\[
A\otimes_{\Day}B\simeq\colim_{(i,j)\in I\times J}\yo(x_i\otimes x'_j).
\]

A functor $F\colon\cC\to\cC'$ induces a functor $F^*\colon\PSh(\cC')\to\PSh(\cC)$ by restriction, with left adjoint $F_!\colon\PSh(\cC)\to\PSh(\cC')$ given by left Kan extension. The functor $F_!$ is symmetric monoidal, whereas $F^*$ is in general only lax symmetric monoidal.

Suppose now that $\cD$ is a \emph{presentably symmetric monoidal} $\infty$-category, i.e. $\cD$ is $\fU_1$-presentable and the monoidal product $\cD\times\cD\to\cD$ preserves $\fU_1$-small colimits in each variable separately.
Then we may fix a regular cardinal $\kappa<\fU_1$ such that $\cD$ is $\kappa$-accessible, set $\cC:=\cD^\kappa\subset\cD$ for the ($\fU_1$-small) full symmetric monoidal $\infty$-subcategory spanned by $\kappa$-compact objects, and consider the (still fully faithful) restricted Yoneda embedding $\yo_\kappa\colon\cD\to\PSh(\cC)$. The functor $\yo_\kappa$ is still endowed with a symmetric monoidal structure, and moreover it preserves all limits and $\kappa$-filtered colimits, in particular it is accessible. By \cite[Corollary 5.5.2.9]{LurieHTT} we have that $\yo_\kappa$ admits a left adjoint 
$\yo'_\kappa\colon\PSh(\cC)\to\cD$, which is given by the following formula: if $A\simeq\colim_{i\in I}\yo(x_i)\in\PSh(\cC^\kappa)$, then $\yo'_\kappa(A)\simeq\colim_{i\in I}x_i$.

From this and the formula for $A\otimes_\Day B$ given above, together with the hypothesis that the monoidal product on $\cD$ preserves colimits in each variable, we obtain for $A,B\in\PSh(\cC)$ as above the chain of equivalences
\[
\begin{split}
\yo'_\kappa(A\otimes_\Day B)&\simeq\yo'_\kappa(\colim_{(i,j)\in I\times J}\yo_\kappa(x_i\otimes x'_j))\simeq\colim_{(i,j)\in I\times J}x_i\otimes x'_j\\
&\simeq(\colim_{i\in I}x_i)\otimes(\colim_{j\in J}x'_j)\simeq\yo'_\kappa(A)\otimes\yo'_\kappa(B),
\end{split}
\]
from which we learn that the oplax symmetric monoidal structure on the left adjoint $\yo'_\kappa$ is in fact a symmetric monoidal structure.

\subsection{Localisation of \texorpdfstring{$\infty$}{infty}-categories}
\label{subsec:localisation}
The following material appears in essentially the same form in \cite{Bianchi:graphcobset}.
\begin{defn}
\label{defn:RelCatinfone}
We denote by $\RelCatinfty\subset\Fun([1],\Catinfty)$ the full $\infty$-subcategory spanned by wide subcategory inclusions $W\to\cC$, i.e. functors inducing an equivalence on core groupoids and inclusions on morphism spaces.
An object in $\RelCatinfty$ is usually denoted as a pair $(\cC,W)$, and is called a \emph{relative $\infty$-category}.

The fully faithful inclusion $\Catinfty\hto\RelCatinfty$ sending $\cC\mapsto(\cC,\cC^\simeq)$ admits a left adjoint $L\colon\RelCatinfty\to\Catinfty$, sending $(\cC,W)$ to the localisation of $\cC$ at all morphisms in $W$.
\end{defn}

The following is an adaptation of \cite[Chapter 7.2]{Cisinski} for \emph{left} calculi of fractions.
\begin{defn}
\label{defn:leftcalculus}
Let $(\cC,W)\in\RelCatinfty$ and let $x\in\cC$. A \emph{left calculus of fractions at $x$} is a functor $\pi(x)\colon W(x)\to\cC_{x/}$ satisfying the following properties:
\begin{itemize}
\item the $\infty$-category $W(x)$ admits an initial object $x_0$, and $\pi(x)\colon x_0\mapsto\Id_x$;
\item $\pi(x)$ takes value in the full subcategory of $\cC_{x/}$ spanned by arrows that are contained in $W$;
\item the left Kan extension functor $(d_0\pi(x))_!\colon\PSh(W(x))\to\PSh(\cC)$ sends the terminal presheaf $*\in\PSh(W(x))$ to a presheaf $(d_0\pi(x))_!(*)\in\PSh(\cC)$ that factors through $L(\cC,W)$, i.e. morphisms in $W^\op$ are sent to invertible morphisms in $\cS$ along $(d_0\pi(x))_!(*)$.
\end{itemize}
\end{defn}
We remark that, for any functor $\pi(x)\colon W(x)\to\cC_{x/}$ and for $y\in\cC$, we have $(d_0\pi(x))_!(*)(y)\simeq \colim_{z\in W(x)}\cC(y,d_0\pi(z))$; we can also consider the $\infty$-category $\cC_{y/}\times_{\cC}W(x)$, where the fibre product is taken along the functors $d_0\colon\cC_{y/}\to\cC$ and $d_0\pi\colon W(x)\to\cC$; then we have an equivalence $(d_0\pi(x))_!(*)(y)\simeq|\cC_{y/}\times_{\cC}W(x)|$.
\begin{thm}[{\cite[Theorem 7.2.8]{Cisinski}}]
\label{thm:Cisinski}
Let $(\cC,W)$ be a relative $\infty$-category, let $x\in\cC$ and let $\pi(x)\colon W(x)\to\cC_{x/}$ be a left calculus of fractions at $x$. Then we have an equivalence in $\PSh(\cC)$
\[
L(\cC,W)(-,x)\simeq (d_0\pi(x))_!(*).
\]
\end{thm}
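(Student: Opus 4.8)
The plan is to transfer the claimed equivalence to the presheaf category of the localisation $L:=L(\cC,W)$ and then to pull it back to $\PSh(\cC)$ along the localisation functor $\gamma\colon\cC\to L$. Recall that $\gamma^*\colon\PSh(L)\to\PSh(\cC)$ is fully faithful with essential image the \emph{$W$-local} presheaves, i.e. those sending every morphism of $W$ to an equivalence of spaces; that $\gamma^*$ admits a left adjoint $\gamma_!$ with $\gamma_!\gamma^*\simeq\Id$; and hence that $\gamma^*\gamma_!$ is a reflective localisation whose fixed objects are precisely the $W$-local presheaves. The third bullet of Definition \ref{defn:leftcalculus} says exactly that $(d_0\pi(x))_!(*)$ is $W$-local, so, using the identity $(\gamma\circ d_0\pi(x))_!\simeq\gamma_!\circ(d_0\pi(x))_!$ provided by the functoriality of left Kan extension, we obtain
\[
(d_0\pi(x))_!(*)\ \simeq\ \gamma^*\gamma_!\,(d_0\pi(x))_!(*)\ \simeq\ \gamma^*\big((\gamma\circ d_0\pi(x))_!(*)\big).
\]
Since $\gamma^*$ applied to the presheaf on $L$ represented by $x$ is by definition $L(\cC,W)(-,x)$, it now suffices to prove the equivalence $(\gamma\circ d_0\pi(x))_!(*)\simeq L(-,x)$ inside $\PSh(L)$ (here and below, for $\ell\in L$ we write $L(-,\ell)$ for the presheaf on $L$ it represents).

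To compute the left-hand side I would expand the terminal presheaf $*\in\PSh(W(x))$ as the colimit of all representable presheaves and use that left Kan extension carries a representable presheaf to the representable at its image, which gives
\[
(\gamma\circ d_0\pi(x))_!(*)\ \simeq\ \colim_{z\in W(x)} L\big(-,d_0\pi(z)\big)\ \in\ \PSh(L).
\]
The crucial input is then a natural equivalence of functors $W(x)\to L$. Indeed, the tautological natural transformation $d_1\Rightarrow d_0$ of the source and target functors on $\cC_{x/}$, whose component at an arrow $x\to c$ is that arrow itself, pulls back along $\pi(x)$ to a natural transformation $\mathrm{const}_x\Rightarrow d_0\pi(x)\colon W(x)\to\cC$ with component $\pi(z)\colon x\to d_0\pi(z)$ at an object $z$. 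By the second bullet of Definition \ref{defn:leftcalculus} each such $\pi(z)$ lies in $W$, hence is carried by $\gamma$ to an equivalence of $L$; so after composing with $\gamma$ we obtain a \emph{natural equivalence} $\mathrm{const}_x\xrightarrow{\ \simeq\ }\gamma\circ d_0\pi(x)\colon W(x)\to L$.

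Post-composing this natural equivalence with the Yoneda embedding of $L$ and passing to colimits over $W(x)$, we get an equivalence
\[
(\gamma\circ d_0\pi(x))_!(*)\ \simeq\ \colim_{z\in W(x)} L\big(-,d_0\pi(z)\big)\ \simeq\ \colim_{z\in W(x)} L(-,x).
\]
The right-hand side is the colimit of a constant diagram indexed by $W(x)$; by the first bullet of Definition \ref{defn:leftcalculus} the $\infty$-category $W(x)$ has an initial object, hence has contractible classifying space, so this constant colimit is simply $L(-,x)$. The normalisation $\pi(x)\colon x_0\mapsto\Id_x$ moreover matches this equivalence with the comparison map $L(-,x)\to(\gamma\circ d_0\pi(x))_!(*)$ coming from the coprojection at the object $x_0$, so that no further bookkeeping is needed. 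Pulling back along $\gamma^*$ and combining with the first paragraph yields the desired equivalence $L(\cC,W)(-,x)\simeq(d_0\pi(x))_!(*)$ in $\PSh(\cC)$.

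The step I expect to require most care is not any single computation but the correct set-up of the formal properties of $\gamma$ used above: the fully faithfulness and the identification of the essential image of $\gamma^*$, the compatibility of $\gamma_!$ with left Kan extensions along functors into $\cC$, and the characterisation of the $W$-local presheaves as the objects fixed by $\gamma^*\gamma_!$. It is only through this scaffolding that the ``calculus of fractions'' hypothesis enters the argument, namely through the third bullet of Definition \ref{defn:leftcalculus}; once it is in place, the remaining steps are standard manipulations of colimits of representable presheaves together with the elementary fact that a natural equivalence of diagrams induces an equivalence on colimits.
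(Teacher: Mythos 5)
The paper does not give a proof of this statement: it is presented as an adaptation to left calculi of fractions of \cite[Theorem 7.2.8]{Cisinski}, and the paper simply cites that source. So there is no ``paper's proof'' to compare against; I will instead assess your argument on its own terms.

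Your proof is correct, and it is a clean, self-contained $\infty$-categorical argument that uses each of the three bullets of Definition~\ref{defn:leftcalculus} exactly once, in a way that makes clear what each hypothesis is for. The scaffolding is sound: $\gamma^*\colon\PSh(L)\to\PSh(\cC)$ is fully faithful with essential image the $W$-local presheaves, $\gamma_!\gamma^*\simeq\Id$, and left Kan extension is compatible with composition, so the reduction $(d_0\pi(x))_!(*)\simeq\gamma^*\big((\gamma\circ d_0\pi(x))_!(*)\big)$ is legitimate once bullet (3) is invoked to place $(d_0\pi(x))_!(*)$ in the essential image. The computation in $\PSh(L)$ is also correct: $*\in\PSh(W(x))$ is the colimit of representables, left Kan extension preserves colimits and sends representables to representables, bullet (2) guarantees that the tautological transformation $\mathrm{const}_x\Rightarrow d_0\pi(x)$ becomes a pointwise (hence genuine) natural equivalence after composing with $\gamma$, and bullet (1) supplies the initial object making $|W(x)|$ contractible, so the constant colimit collapses to $L(-,x)$. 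The closing remark about the normalisation $\pi(x)\colon x_0\mapsto\Id_x$ is not strictly needed to establish the equivalence as stated, but it correctly identifies the unit-type comparison map as the equivalence produced, which is good hygiene. Cisinski's own treatment is carried out in the model of marked simplicial sets via the covariant model structure, which is heavier machinery for the same underlying idea; your argument trades that for elementary manipulations with presheaves and localisations, which fits better with the rest of the paper's $\infty$-categorical style.
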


\subsection{\texorpdfstring{$\inftwo$}{inftwo}-categories, deloopings and Gray tensor product}
\label{subsec:deloopingGray}
For $i=1,2$ we denote by $\Catinftwo^{\fU_i}\in\Catinfty^{\fU_{i+1}}$ the $\infty$-category of $\fU_i$-small $\inftwo$-categories; it is endowed with the cartesian symmetric monoidal structure.
We denote by $|-|_2$ and $(-)^\twosimeq\colon\Catinftwo^{\fU_i}\to\Catinfty^{\fU_i}$ the left and right adjoint to the inclusion $\Catinfty\subset\Catinftwo$ of $\fU_{i+1}$-small $\infty$-categories, respectively. We further write $|-|$ and $(-)^\simeq\colon\Catinftwo^{\fU_i}\to\cS^{\fU_i}$ for the left and right adjoint to the inclusion $\cS^{\fU_i}\subset\Catinftwo^{\fU_i}$, respectively.

For $\cC\in\CAlg(\Catinfty^{\fU_i})$ we let $\bB\cC\in\CAlg(\Catinftwo^{\fU_i})$ denote the \emph{first delooping} of $\cC$, i.e. the essentially unique symmetric monoidal $\inftwo$-category with an equivalence
$\bB\cC(\one_{\bB\cC},\one_{\bB\cC})\simeq\cC$ and such that $(\bB\cC)^\simeq$ is a connected space.
Taking the first delooping gives a fully faithful inclusion $\bB\colon\CAlg(\Catinfty)\hto\CAlg(\Catinftwo)$ with essential image given by the symmetric monoidal $\inftwo$-categories whose space of objects is connected.
We usually denote by $*=\one_{\bB\cC}$ the monoidal unit of a first delooping, which we regard as a ``base object''.

If $\cC$ is a symmetric monoidal $\infty$-groupoid, then $\bB\cC\in\CAlg(\Catinfty^{\fU_i})$; and if $\cC$ is moreover group-like with respect to the tensor product, then $\bB\cC\in\CAlg(\cS^{\fU_i})$.

For $1\le i\le 2$ we denote by $\circledast$ the Gray tensor product on $\Catinftwo^{\fU_i}$; it gives $\Catinftwo^{\fU_i}$ a (non-symmetric) monoidal structure, which is \emph{not} equivalent to the cartesian symmetric monoidal structure $\infty$-category. An introduction to the subject can be found in \cite[Chapter 10, Section 3]{GR} and in \cite{GHL}.

For $\cC\in\Catinftwo^{\fU_i}$, the endofunctors $-\circledast\cC$ and $\cC\circledast-$ of $\Catinftwo^{\fU_i}$ admits right adjoints that we denote $\Fun(\cC,-)^\llax$ and $\Fun(\cC,-)^\rlax$.
Both in $\Fun(\cC,\cD)^\llax$ and in $\Fun(\cC,\cD)^\rlax$, an object is an $\inftwo$-functor $\cC\to\cD$; the difference between the two arises when considering 1-morphisms and 2-morphisms.

For instance, let $F,G$ be $\inftwo$-functors $\cC\to\cD$, and let $f\colon x\to y$ be a 1-morphism in $\cC$; then we obtain 1-morphisms $F(f)$ and $G(f)$ in $\cD$; let now $\alpha\colon F\to G$ be a 1-morphism in $\Fun(\cC,\cD)^\llax$ or in $\Fun(\cC,\cD)^\rlax$: then $\alpha$ produces in both cases 1-morphisms $\alpha_x\colon F(x)\to G(x)$ and $\alpha_y\colon F(y)\to G(y)$ in $\cD$, together with a 2-morphism filling the following square
\[
 \begin{tikzcd}[row sep=30pt, column sep =70 pt]
  F(x)\ar[r,"\alpha_x"]\ar[d,"F(f)"',""{name=tL,inner sep=2pt,right},""{name=sR,inner sep=2pt,right,very near end}]& G(x)\ar[d,""{name=sL,inner sep=2pt,left,very near start},""{name=tR,inner sep=2pt,left},"G(f)"] \\
  F(y)\ar[r,"\alpha_y"']&G(y);
  \ar[Rightarrow,from=sL, to=tL,"\llax"']\ar[Rightarrow, from=sR,to=tR,"\rlax"']
 \end{tikzcd}
\]
the difference between the two cases is the orientation of this 2-morphism.

There is a natural transformation $-\circledast-\Rightarrow-\times-$ of functors $(\Catinftwo^{\fU_i})^2\to\Catinftwo^{\fU_i}$; evaluating one variable at $\cC\in\Catinftwo^{\fU_i}$ and passing to right adjoints, we obtain natural transformations $\Fun(\cC,-)\Rightarrow\Fun(\cC,-)^\llax$ and $\Fun(\cC,-)\Rightarrow\Fun(\cC,-)^\rlax$ of endofunctors of $\Catinftwo^{\fU_i}$.

If $\cD\in\CAlg(\Catinftwo^{\fU_i})$ is a symmetric monoidal $\inftwo$-category, then all of $\Fun(\cC,\cD)$, $\Fun(\cC,\cD)^\llax$ and $\Fun(\cC,\cD)^\rlax$ are symmetric monoidal $\inftwo$-catego\-ries by pointwise operations, and the two natural $\inftwo$-functors $\Fun(\cC,\cD)^\llax\ot\Fun(\cC,\cD)\to\Fun(\cC,\cD)^\llax$ are symmetric monoidal.

\begin{nota}
\label{nota:laxcomma}
For $1\le i\le 2$ and for $\cC\in\Catinftwo^{\fU_i}$ we denote by 
\[
d_0,d_1\colon\Fun([1],\cC)^\llax\to\cC
\]
the target and source functors. For $x\in\cC$ we denote by $\cC_{x/}^\llax$ the pullback in $\Catinftwo^{\fU_i}$ of the cospan $*\overset{x}{\to}\cC\overset{d_1}{\ot}\Fun([1],\cC)^\llax$, and by $\cC_{/x}^\llax$ the pullback of the cospan $*\overset{x}{\to}\cC\overset{d_0}{\ot}\Fun([1],\cC)^\llax$. We define similarly $\cC_{/x}^\rlax$ and $\cC_{x/}^\rlax$.
If $\cC$ is symmetric monoidal and $x=\one\in\cC$ is the monoidal unit, then all $\inftwo$-categories $\cC_{x/}^\llax$, $\cC_{/x}^\llax$, $\cC_{x/}^\rlax$ and $\cC_{/x}^\rlax$ have an induced symmetric monoidal structure.
\end{nota}

\subsection{Stable presentable \texorpdfstring{$R$}{R}-linear categories}
\label{subsec:SPrLR}
We denote by $\Sp\in\Catinfty^{\fU_2}$ the stable, presentable $\infty$-category of spectra.
We further let $\SPrL\in\Catinftwo^{\fU_2}$ denote the large $\inftwo$-category having stable, presentable $\infty$-categories as objects, left adjoint functors as 1-morphisms, and natural transformations as 2-morphisms. Note that objects of $\SPrL$ are only $\fU_2$-small as $\infty$-categories (the only exception being $*\in\SPrL$, which is also $\fU_1$-small); however an $\fU_1$-presentable $\infty$-category can be described through an $\fU_1$-small amount of data, so the core groupoid of $\SPrL$ is $\fU_2$-small; similarly, the spaces of 1-morphisms and 2-morphisms in $\SPrL$ are $\fU_2$-small (in fact even $\fU_1$-small); this justifies our writing $\SPrL\in\Catinftwo^{\fU_2}$.

The $\inftwo$-category $\SPrL$ is endowed with a symmetric monoidal structure, introduced in \cite[Section 4.8]{HA} for the underlying $\infty$-category $(\SPrL)^\twosimeq$, and extended to the $\inftwo$-categorical setting in \cite[part I, chapter 1, section 8.3]{GR} and \cite[Section 4.4]{HSS}.

We usually denote by $R\in\CAlg(\Sp)$ an $E_\infty$-ring spectrum; this includes the case of a discrete commutative ring $R$, which is identified with its Eilenberg--Maclane $E_\infty$-ring spectrum.
We denote by $\Mod_R=\Mod_R(\Sp)\in\SPrL$ the $\infty$-category of $R$-modules in spectra. If $R$ is discrete, $\Mod_R$ is equivalent to the derived $\infty$-category $D(R)$; the category of $R$-modules in abelian groups shall instead be denoted by $\Mod_R(\Ab)$.
For $\zeta\in\Mod_R$, we denote by $\zeta[n]$ the $n$\textsuperscript{th} suspension of $\zeta$.

Tensor product over $R$ makes $\Mod_R$ into an object in $\CAlg(\SPrL)$, i.e. into a \emph{presentably symmetric monoidal stable} $\infty$-category. We denote by $\SPrL_R=\Mod_{\Mod_R}(\SPrL)\in\CAlg(\Catinftwo^{\fU_2})$ the symmetric monoidal $\inftwo$-category of $R$-linear presentable $\infty$-categories, with $R$-linear left adjoint functors as 1-morphisms and $R$-linear natural transformations as 2-morphisms. 

We let $\Pic(R)\subset\Mod_R^{\simeq}$ denote the full $\infty$-subgroupoid of the core groupoid of $\Mod_R$ spanned by tensor-invertible objects. Invertible modules are in particular dualisable and hence compact, so we have $\Pic(R)\subset\Mod_R^\omega$; in particular $\Pic(R)$ is equivalent to a $\fU_1$-small $\infty$-groupoid, so we can write $\Pic(R)\in\cS$. Tensor product over $R$ makes $\Pic(R)$ into a group-like symmetric monoidal $\infty$-groupoid, with first delooping $\bB\Pic(R)\in\cS$ (see the notation from Subsection \ref{subsec:deloopingGray}).

\subsection{Parametrised \texorpdfstring{$R$}{R}-modules}
We fix $R\in\CAlg(\Sp)$. For a space $X$ we denote by $\Mod_R^X=\Fun(X,\Mod_R)$ the $\infty$-category of parametrised $R$-modules over $X$, where $X$ is considered as an $\infty$-groupoid.
Pointwise tensor product over $R$ makes $\Mod_R^X$ into a symmetric monoidal $R$-linear $\infty$-category, i.e. a commutative algebra object in $\SPrL_R$. We denote by $\Pic(\Mod_R^X)$ the full $\infty$-subgroupoid of $(\Mod_R^X)^\simeq$ spanned by invertible objects.

Given a map of spaces $f\colon X\to Y$, we denote by $f^*=\Fun(f,\Mod_R)$ the induced symmetric monoidal functor $\Mod_R^Y\to\Mod_R^X$. It is a morphism in $\SPrL_R$.

Taking categories of parametrised $R$-modules gives a symmetric monoidal functor $\Mod_R^{(-)}:=\Fun(-,\Mod_R)\colon(\cS,\times)^\op\to\CAlg(\SPrL_R)$: in particular, for two spaces $X,Y$, there is a canonical symmetric monoidal and $R$-linear identification $\Mod_R^X\otimes_{\Mod_R}\Mod_R^Y\simeq\Mod_R^{X\times Y}$.

For a map of spaces $f\colon X\to Y$, the functor $f^*$ admits a left and a right adjoint in $\Catinftwo^{\fU_2}$, denoted $f_!,f_*\colon \Mod_R^X\to\Mod_R^Y$, corresponding to computing colimits and limits on the (homotopy) fibres of $f$. The morphism $f_!$ is also in $\SPrL_R$, i.e. it is colimit preserving and $R$-linear, whereas the morphism $f_*$ does not preserve $\fU_1$-small colimits in general, and it only carries a lax $R$-linear structure in general.

\begin{nota}
For $X\in\cS$ we denote by $p(X)\colon X\to *$ the terminal map; it induces functors $p(X)^*\colon \Mod_R\to\Mod_R^X$ and $p(X)_!,p(X)_*\colon\Mod_R^X\to \Mod_R$.

For $\zeta\in\Mod_R$ we sometimes abuse notation and write $\zeta$ for $p(X)^*(\zeta)\in\Mod_R^X$, i.e. the constant parametrised $R$-module over $X$ with value $\zeta$.
\end{nota}
\begin{nota}
 We usually denote by $\eta_{f_*}\colon\Id\to f_*f^*$ the unit of the adjunction involving $f_*$ (which is the adjunction between $f^*$ and $f_*$), by $\epsilon_{f_!}\colon f_!f^*\to\Id$ the counit of the adjunction involving $f_!$, and similarly for $\eta_{f_!}$ and $\epsilon_{f_*}$.
\end{nota}

Each $\zeta\in\Mod_R$ gives rise to a homology theory on spaces, associating with $X\in\cS$ the sequence of homotopy groups $H_*(X;\zeta):=\pi_*(p(X)_!p(X)^*\zeta)$.
Given a map of spaces $f\colon X\to Y$, we have $p(X)\simeq p(Y)f$ and we obtain a map $H_*(X;\zeta)\to H_*(Y;\zeta)$ by taking homotopy groups of the evaluation at $\zeta$ of the following composite of natural transformations between endofunctors of $\Mod_R$:
\[
\begin{tikzcd}[column sep=50pt]
 p(X)_!p(X)^*\simeq p(Y)_!f_!f^*p(Y)^*\ar[r,"p(Y)_!\epsilon_{f_!}p(Y)^*"] &p(Y)_!p(Y)^*.
\end{tikzcd}
\]
There is also a generalised cohomology theory $H^*(X;\zeta):=\pi_{-*}(p(X)_*p(X)^*(\zeta))$, whose functoriality with respect to maps $f\colon X\to Y$ relies on the units $\eta_{f_*}$.

\begin{nota}
\label{nota:fibcofalpha}
For a natural transformation $\alpha\colon F\Rightarrow G$ of functors $F,G\colon \cC\to\cD$, with $\cD$ a stable $\infty$-category, we denote by $\fib(\alpha),\cof(\alpha)\colon\cC\to\cD$ the functors sending an object $x\in\cC$ to the fibre, respectively the cofibre, of $\alpha_x\colon F(x)\to G(x)$.
\end{nota}
Given a map of spaces $i\colon Y\to X$, which we think of as an inclusion, and given $\zeta\in\Mod_R$, we can similarly define the relative homology groups $H_*(X,Y;\zeta)$ as the homotopy groups of $p(X)_!\cof(\epsilon_{i_!})p(X)^*(\zeta)$; similarly, the relative cohomology groups $H^*(X,Y;\zeta)$ are defined as $\pi_{-*}\big(p(X)_*\fib(\eta_{i_*})p(X)^*(\zeta)\big)$.

If $R$ is a discrete commutative ring and $\zeta\in\Mod_R(\Ab)\subset\Mod_R$, then $H_*(X;\zeta)$, $H^*(X;\zeta)$, $H_*(X,Y;\zeta)$ and $H^*(X,Y;\zeta)$ are isomorphic to the usual singular homology and cohomology of $X$ and $(X,Y)$ with coefficients in $\zeta$.

Homology and cohomology with twisted coefficients are defined by starting with a parametrised $R$-module $\zeta\in \Mod_R^X$ (e.g., a classical coefficient system $\zeta\colon X\to\Mod_R(\Ab)\subset\Mod_R$) and by omitting the terms ``$p(X)^*$'' in the above formulas.
\subsection{Beck--Chevalley transformations and projection formulae}
\label{subsec:BCprojformulae}
Given a pullback square of spaces
\[
 \begin{tikzcd}[row sep=15pt]
  A\ar[r,"\tilde g"]\ar[d,"\tilde f"']\ar[dr,phantom,"\lrcorner",very near start]&B\ar[d,"f"]\\
  C\ar[r,"g"]&D,
 \end{tikzcd}
\]
the following standard facts hold:
\begin{enumerate}
 \item by commutativity of the square we have equivalences of functors $\tilde g^*f^*\simeq \tilde f^*g^*$, $g_!\tilde f_!\simeq f_!\tilde g_!$ and $g_*\tilde f_*\simeq f_*\tilde g_*$;
 \item the following composites of natural transformations 
 are equivalences of functors: we will refer to them as \emph{Beck--Chevalley equivalences}:
 \[
 \begin{tikzcd}[column sep=40pt, row sep=10pt]
  \tilde g_!\tilde f^*\ar[r,"\tilde g_!\tilde f^*\eta_{ g_!}"]&\tilde g_!\tilde f^* g^* g_!\simeq \tilde g_!\tilde g^*f^* g_!\ar[r,"\epsilon_{g_!}f^*g_!"]& f^*g_!;\\
  f^*g_*\ar[r,"f^*g_*\eta_{\tilde f_*}"] &f^*g_*\tilde f_*\tilde f^*\simeq f^*f_*\tilde g_*\tilde f^*\ar[r,"\epsilon_{g_*}\tilde g_*\tilde f^*"] &\tilde g_*\tilde f^*,
 \end{tikzcd}
 \]
 and we will usually denote them by writing $\tilde g_!\tilde f^*\overset{BC}{\simeq}f^*g_!$ and $f^*g_*\overset{BC}{\simeq}\tilde g_*\tilde f^*$;
 \item using the previous, we obtain a composite natural transformation as follows
 \[
  \begin{tikzcd}[column sep=40pt]
   g_!\tilde f_*\ar[r,"\eta_{f_*}g_!f_*"]&f_*f^*g_!\tilde f_*\overset{BC}{\simeq}f_*\tilde g_!\tilde f^*\tilde f_*\ar[r,"f_*\tilde g_!\epsilon_{\tilde f_*}"] &f_*\tilde g_!,
  \end{tikzcd}
 \]
and we shall also denote this by $g_!\tilde f_*\overset{BC}{\to}f_*\tilde g_!$; this last transformation is in general not an equivalence.
\end{enumerate}

Given a map of spaces $f\colon X\to Y$, we already mentioned that $f^*\colon\Mod_R^Y\to\Mod_R^X$ is a symmetric monoidal functor; moreover the first of the following composite natural transformations of functors $\Mod_R^X\times\Mod_R^Y\to\Mod_R^Y$ is an equivalence, whereas the second is whenever $f$ has compact fibres:
\[
f_!(-\otimes_Rf^*(-))\xrightarrow{\eta_{f_!}} f_!(f^*f_!(-)\otimes_R f^*(-))\simeq f_!f^*(f_!(-)\otimes_R-)\xrightarrow{\epsilon_{f_!}}f_!(-)\otimes_R-;
\]
\[
f_*(-)\otimes_R-\xrightarrow{\eta_{f_*}}f_*f^*(f_*(-)\otimes_R-)\simeq f_*(f^*f_*(-)\otimes_R f^*(-))\xrightarrow{\epsilon_{f_*}}f_*(-\otimes_Rf^*(-)).
\]

\subsection{Cospans and spans}
\label{subsec:cospansspans}
Let $\cC$ be an $\infty$-category admitting finite colimits. We denote by $\bCospan(\cC)$ the $\inftwo$-category of cospans in $\cC$; we briefly recall the ideas of the construction. Objects in $\bCospan(\cC)$ are the same as in $\cC$. The $\infty$-category of morphisms from $x$ to $y$ in $\bCospan(\cC)$ is $\cC_{(x\sqcup y)/}$;  for instance
a 1-morphism $x\to y$ in $\bCospan(\cC)$ is a cospan diagram $x\to z\ot y$, and a 2-morphism from $x\to z\ot y$ to $x\to z'\ot y$ is a commutative diagram in $\cC$ of the following form:
\[
 \begin{tikzcd}[row sep=10pt]
  x\ar[r]\ar[d]&z\ar[dl]\\
  z'&y\ar[u]\ar[l].
 \end{tikzcd}
\]
The horizontal composition of 1-morphisms $x\to z\ot x'$ and $x'\to z'\ot x''$ is $x\to t\ot x''$, where $t$ is the pushout in $\cC$ of the span diagram $z\ot x'\to z'$.
We usually denote $\Cospan(\cC):=\bCospan(\cC)^\twosimeq$ the underlying $\infty$-category of $\bCospan(\cC)$, in which non-invertible 2-morphisms have been discarded.

The coproduct symmetric monoidal structure on $\cC$ gives rise to a symmetric monoidal structure on $\bCospan(\cC)$, in which we take coproducts (in $\cC$) of objects, cospan diagrams (pointwise) and morphisms between cospan diagrams. We denote the monoidal product of $\bCospan(\cC)$ by $-\sqcup-$, even though $x\sqcup y$ is usually \emph{not} a categorical coproduct of $x$ and $y$ in $\Cospan(\cC)$.
We have two canonical symmetric monoidal $\inftwo$-functors $\cC\to\bCospan(\cC)$ and $\cC^\op\to\bCospan(\cC)$.
Dually, for an $\infty$-category $\cC$ with finite limits, we have a $\inftwo$-category $\bSpan(\cC)$, inheriting a symmetric monoidal structure from the product monoidal structure of $\cC$. We denote $\Span(\cC):=\bSpan(\cC)^\twosimeq$.

We will use the following fact in the case $\cC=\cS$: we can extend the $\infty$-functor $\Mod_R^{(-)}:=\Fun(-,\Mod_R)\colon \cS^\op\to\SPrL_R$ 
to an $\inftwo$-functor $\Mod_R^{(-)}\colon \bSpan(\cS)\to\SPrL_R$, sending $X\in\cS$ to $\Mod_R^X$, and sending a span diagram $X\overset{f}{\ot} Z\overset{g}{\to} Y$ to the composite functor $\Mod_R^X\overset{f^*}{\to}\Mod_R^Z\overset{g_!}{\to}\Mod_R^Y$. This follows from \cite[Theorem 3.2.2]{GR}, applied to the setting in which all classes of ``horizontal'', ``vertical'' and ``admissible'' morphisms (in the terminology from loc.cit.) consist of all morphisms in $\cS$; the given theorem is applicable because the Beck--Chevalley transformations from Subsection \ref{subsec:BCprojformulae}, point (2), are equivalences.

Moreover, since $\Mod_R^{(-)}\colon(\cS,\times)^\op\to\SPrL_R$ is a symmetric monoidal $\inftwo$-functor, we obtain a symmetric monoidal structure also on the $\inftwo$-functor $\Mod_R^{(-)}\colon\bSpan(\cS)\to\SPrL_R$.

\subsection{Dualising modules and \texorpdfstring{$R$}{R}-Poincar\'e duality spaces}
\label{subsec:PDspaces}
A space $M\in\cS$ is a \emph{Poincar\'e duality space} if it satisfies a suitable form of Poincar\'e duality, akin to the one enjoyed by closed manifolds. The classical definition of Poincar\'e duality space of dimension $d$ \cite{Browder1,Browder2,Spivak,Wall} requires a coefficient system $O_M\in\Pic(\Z)^M$, with fibres isomorphic to $\Z[-d]$, together with a fundamental class $[M]\in H_0(M;O_M)$, such that for any $\zeta\in\Mod_\Z(\Ab)^M\subset\Mod_\Z^M$, cap product with $[M]$ induces an isomorphism $H_*(M;\zeta)\overset{\cong}{\to} H^{-*}(M;\zeta\otimes O_M)$.
We will use the following alternative approach based on categories of parametrised spectra, due to Klein \cite{Kleindual} (see also \cite[Definition 2]{LuriePoincare}, \cite[Appendix]{LandPD}, \cite[page 93]{nineauthors} and \cite{BHKK} for more modern accounts); we will adapt the approach to the setting of parametrised $R$-modules.

Let $X\in\cS$.
In general $p(X)_*\colon\Mod_R^X\to\Mod_R$ does not preserve colimits, and it only carries a lax $R$-linear structure. However, if $X$ is a compact space (i.e. it is a homotopy retracts of a finite cell complex), then $p(X)_*$ preserves colimits and its lax $R$-linear structure is in fact $R$-linear. By the classification of $R$-linear functors known as ``Morita theory'', in this case, there is a contractible choice of the following data:
\begin{itemize}
 \item a parametrised $R$-module $\omega^R_X\in \Mod_R^X$, called ``dualising module'';
 \item an $R$-linear natural isomorphism $p(X)_*\simeq p(X)_!(-\otimes_R\omega^R_X)$.
\end{itemize}

\begin{defn}
\label{defn:RPoincare}
We say that a compact space $M$ is an \emph{$R$-Poincar\'e duality space of dimension $d\ge0$} if $\omega_M^R$ has fibres isomorphic to $R[-d]$.

A map of spaces $f\colon X\to Y$ is an $R$-Poincar\'e fibration of dimension $d$ if its (homotopy) fibres are $R$-Poincar\'e duality spaces of dimension $d$.
\end{defn}
By means of parametrised homotopy theory, if $f\colon X\to Y$ is an $R$-Poincar\'e fibration of dimension $d$ then there is a canonical (yet not contractible!) choice of the following data, see \cite{BHKK} for details:
\begin{itemize}
 \item a parametrised $R$-module $\omega^R_f\in \Mod_R^X$, called ``dualising module'';
 \item an $R$-linear natural isomorphism $f_*\simeq f_!(-\otimes_R\omega^R_f)$.
\end{itemize}
Note that if $R\to R'$ is a map of $E_\infty$-ring spectra and $M$ is an $R$-Poincar\'e duality space of dimension $d$, then $M$ is also an $R'$-Poincar\'e duality space of dimension $d$, since $\omega_M^{R'}\simeq (p(M)^*(R'))\otimes_R\omega_M^R$.

Closed manifolds are examples of $\bS$-Poincar\'e duality spaces. However, there also exist $\bS$-Poincar\'e duality spaces that are not homotopy equivalent to a closed manifold, see for instance
\cite[Theorem 5.2]{GitlerStasheff}, 
\cite[Sections 7-8]{Spivak}, 
\cite[Theorems 1.5 and 4.3, Corollaries 5.4.1 and 5.4.2]{Wall}.

\begin{defn}
\label{defn:orientation}
An \emph{orientation} on an $R$-Poincar\'e duality space $M$ of dimension $d$ is an isomorphism of parametrised $R$-modules $\omega_M^R\simeq p(M)^*(R[-d])$. Similarly, an orientation on an $R$-Poincar\'e fibration $f\colon X\to Y$ is an isomorphism $\omega_f^R\simeq p(X)^*(R[-d])$
\end{defn}
We remark that an orientation on an $R$-Poincar\'e duality space $M$ yields Poincar\'e duality isomorphism between $R$-homology and $R$-cohomology of $M$, as we have a chain of equivalences of $R$-modules:
\[
\begin{split}
 p(M)_*p(M)^*(R)& \simeq p(M)_!\big(p(M)^*(R)\otimes_R\omega_M^R\big)\\
 &\simeq p(M)_!(p(M)^*(R[-d]))\simeq p(M)_!p(M)^*(R)[-d],
\end{split}
\]
yielding an isomorphism between the respective homotopy groups.

\subsection{Orientation-preserving equivalences}
\label{subsec:hAut+}
Let $M,M'$ be $d$-dimensional $R$-Poin\-car\'e duality spaces. Given a homotopy equivalence $f\colon M\overset{\simeq}{\to} M'$, we obtain an equivalence of categories $f^*\colon \Mod_R^{M'}\overset{\simeq}{\to} \Mod_R^M$, compatible with the functors $p(M)^*$ and $p(M')^*$ from $\Mod_R$; moreover we have an equivalence $f_!\simeq f_*$ obtained by identifying both functors with the inverse of $f^*$. We can then write a composite equivalence 
\[
\begin{split}
p(M)_*&\simeq p(M')_*f_*\simeq p(M')_!(f_!(-)\otimes_R\omega_{M'}^R)\simeq p(M')_!f_!(-\otimes_R f^*(\omega_{M'}^R))\\
&\simeq p(M)_!(-\otimes_Rf^*(\omega_{M'}^R))
\end{split}
\]
and by the contractibility of the choice of a parametrised module $\omega_M^R\in\Mod_R^M$ and together with a natural equivalence $p(M)_*\simeq(-\otimes_R\omega_M^R)$, we obtain in particular an equivalence of parametrised $R$-modules $\omega_M^R\simeq f^*(\omega_{M'}^R)$.

Suppose now that both $M$ and $M'$ are oriented; then we can write an automorphism of $p(M)^*(R[-d])$ as the following composite:
\[
 p(M)^*(R[-d])\simeq\omega_M^R\simeq f^*(\omega_{M'}^R)\simeq f^*p(M')^*(R[-d])\simeq p(M)^*(R[-d]).
\]

\begin{defn}
\label{defn:orientedequivalence}
Let $M,M'$ be oriented $d$-dimensional $R$-Poincar\'e duality spaces. An \emph{orientation-preserving homotopy equivalence} from $M$ to $M'$ is the datum of a homotopy equivalence $f\colon M\to M'$ and a homotopy between the above composite and the identity of $p(M)^*(R[-d])$.
\end{defn}
More formally, let $\Poin_d^R\subset\cS^\simeq$ denote the full $\infty$-subgroupoid spanned by $R$-Poincar\'e duality spaces of dimension $d$; observe that $\Poin_d^R\subset\cS^\omega$, in particular $\Poin_d^R$ is $\fU_1$-small and we may regard it as an object of $\cS$. Denote by $\cS_{/\Pic(R)}$ the overcategory over $\Pic(R)\in\cS$. 

There is a map of $\infty$-groupoids $\omega\colon\Poin_d^R\to(\cS_{/\Pic(R)})^\simeq$, given by endowing an $R$-Poincar\'e duality space $M$, up to contractible choice, with an invertible parametrised $R$-module $\omega_M^R$ and a natural equivalence $p(M)_*\simeq p(M)_!(-\otimes_R\omega_M^R)$, and by then forgetting the second datum, namely the natural equivalence. Informally, the functor $\omega$ sends $M$ to the arrow $\omega_M^R\colon M\to \Pic(R)$.
Combining this with the essentially unique functor $\Poin_d^R\to(\cS_{/*})^\simeq$ sending $M$ to $M\to *$, we obtain a functor of $\infty$-groupoids
\[
\omega'\colon\Poin_d^R\to\Fun(\del\Delta^2,\cS)^\simeq,
\]
sending $M$ to the functor $\omega'(M)\colon\del\Delta^2\to\cS$ which in turn sends $0\mapsto M$, $1\mapsto *$, $2\mapsto\Pic(R)$, $(0\to1)\mapsto(M\to *)$, $(0\to2)\mapsto(\omega_M^R\colon M\to \Pic(R))$, and $(1\to2)\mapsto(R[-d]\colon *\to\Pic(R))$
\begin{defn}
\label{defn:fMPoin}
We define the $\infty$-groupoid of oriented $R$-Poincar\'e duality spaces
$\fM_{\Poin,d}^{R,+}$ as the fibre product in $\cS^{\fU_2}$ of the cospan
\[
\begin{tikzcd}
\Poin_d^R\ar[r,"\omega'"]&\Fun(\partial\Delta^2,\cS)^\simeq&\Fun(\Delta^2,\cS)^\simeq.\ar[l,"\text{restrict}"']
\end{tikzcd}
\]
\end{defn}
An object in $\fM_{\Poin,d}^{R,+}$ is thus a commutative triangle of the form
\[
\begin{tikzcd}
M\ar[r]\ar[dr,"\omega_M^R"']&*\ar[d,"{R[-d]}"]\\
&\Pic(R),
\end{tikzcd}
\]
which is the same as the datum of an oriented $d$-dimensional $R$-Poincar\'e duality space.
A morphism in $\fM_{\Poin,d}^{R,+}$ is precisely an orientation-preserving equivalence as in Definition \ref{defn:orientedequivalence}.

We observe that the functor $\omega'$ takes value in $\Fun(\del\Delta^2,\cS^\kappa)$  for any regular cardinal $\kappa<\fU_1$ such that $\Pic(R)$ is $\kappa$-compact: here $\cS^\kappa\subset\cS$ denotes the $\fU_1$-small full $\infty$-subcategory of $\cS$ spanned by $\kappa$-compact objects. We can thus replace both occurrences of ``$\cS$'' by ``$\cS^\kappa$'' in Definition \ref{defn:fMPoin}, thus showing that $\fM_{\Poin,d}^{R,+}\in\cS^{\fU_2}$ is in fact $\fU_1$-small, i.e. $\fM_{\Poin,d}^{R,+}\in\cS$.
\begin{nota}
\label{nota:hAut+}
For an oriented $R$-Poincar\'e duality space $M$ we let $\hAut^+(M)$ denote the loop space of $\fM_{\Poin,d}^{R,+}$ at $M$, and we let $\bB\hAut^+(M)$ denote the component of $\fM_{\Poin,d}^{R,+}$ containing $M$.
\end{nota}
\begin{rem}
In Definition \ref{defn:RPoincare}, one could require that $M\in\cS$ is just \emph{$\Mod_R$-twisted ambidextrous} in the sense of \cite{Cnossen}, instead of compact. The $\infty$-groupoid $\fM_{\Poin,d}^{R,+}$ will be $\fU_2$-small in this case, but not obviously $\fU_1$-small.
\end{rem}

\subsection{Commutative Frobenius algebras}
\label{subsec:CFrob}
We start by recalling some material from \cite[Subsection 4.6.5]{HA}.
\begin{defn}
\label{defn:duality}
Let $\cC$ be a symmetric monoidal $\infty$-category, and let $x\in\cC$ be an object. A \emph{duality datum} for $x$ is the pair $(x^\vee,c)$ of an object $x^\vee\in\cC$ and a morphism $c\colon x\otimes x^\vee\to \one$ satisfying the following property: there exists a morphism $u\colon 1\to x^\vee\otimes x$ for which there exist homotopies $\Id_x\simeq (c\otimes\Id_x)(\Id_x\otimes u)$ and $\Id_{x^\vee}\simeq(\Id_{x^\vee}\otimes c)(u\otimes\Id_{x^\vee})$. In this case we say that $x^\vee$ is a dual of $x$, and that $x$ is dualisable; the maps $c$ and $u$ are called counit and unit of the duality.
\end{defn}

The following is a symmetric monoidal adaptation of \cite[Definition 4.6.5.1]{HA}.
\begin{defn}
\label{defn:Frobeniusalgebra}
A \emph{commutative Frobenius algebra} in a symmetric monoidal $\infty$-category $\cC$ is the datum $(\fI,\ft)$ of a symmetric monoidal functor $\fI\colon\Fin\to\cC$ and a morphism $\ft\colon\fI(\ul1)\to\fI(\ul0)$ such that $(\fI(\ul1),\ft\circ\fI(\mu))$ is a duality datum for $\fI(\ul1)$, where $\mu\colon\ul2\to\ul1$ denotes the unique map in $\Fin$.
\end{defn}
A recent result of Barkan--Steinebrunner \cite{BarkanSteinebrunner} allows one to use the datum of a commutative Frobenius algebra in a symmetric monoidal $\infty$-category $\cC$ in order to construct a symmetric monoidal functor from a certain $\infty$-category $\Gr$ of \emph{graph cobordisms between finite sets} to $\cC$. 
\begin{defn}
\label{defn:Gr}
Recall Subsection \ref{subsec:cospansspans}. We denote by $\Gr$ the symmetric monoidal $\infty$-subcategory of $\Cospan(\cS)$ whose objects are spaces homotopy equivalent to a finite set, and whose morphisms are cospans $B\to W\ot A$ with $W$ homotopy equivalent to a finite cell complex of dimension $\le1$.
\end{defn}
For $i=1,2$ we have a functor $\CFrob\colon\CAlg(\Catinfty^{\fU_i})\to\cS^{\fU_i}$, associating with a symmetric monoidal $\infty$-category $\cC$ the moduli space of commutative Frobenius algebras in $\cC$;
see \cite{BarkanSteinebrunner} or \cite[Definition 2.15]{Bianchi:graphcobset} for a precise definition. 

\begin{thm}[Barkan--Steinebrunner]
\label{thm:BarkanSteinebrunner}
For $i=1,2$, the functor
\[
\CFrob\colon\CAlg(\Catinfty^{\fU_i})\to\cS^{\fU_i},
\]
associating with a symmetric monoidal $\infty$-category its moduli space of commutative Frobenius algebras, is represented by $\Gr$. In other words, we have a natural equivalence
of functors $\CAlg(\Catinfty^{\fU_i})\to\cS^{\fU_i}$
\[
\Fun^\otimes(\Gr,-)^\simeq\simeq\CFrob.
\]
Similarly, the functor $\Omega\colon\CAlg(\cS^{\fU_i})\to\cS^{\fU_i}$, associating with a symmetric monoidal $\infty$-groupoid its loop space at the identity element, is represented by $|\Gr|$, i.e. we have a natural equivalence $\Fun^\otimes (|\Gr|,-)^\simeq\simeq\Omega$ of functors $\CAlg(\cS^{\fU_i})\to\cS^{\fU_i}$; the latter equivalence is induced by evaluation at the morphism $\ul1\to\ul1\ot\ul0\in\Gr(\ul1,\ul0)$.
\end{thm}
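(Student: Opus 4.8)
\emph{Proof proposal.}
The theorem is equivalent to saying that $\Gr$ is the \emph{free symmetric monoidal $\infty$-category on a commutative Frobenius algebra}, so my plan is to exhibit the universal such algebra in $\Gr$ and prove its universality. The object $\ul 1\in\Gr$ carries a tautological commutative Frobenius structure: $\fI\colon\Fin\to\Gr$ is the composite $\Fin\to\Cospan(\Fin)\to\Cospan(\cS)$, which lands in $\Gr$ since only finite sets appear, and $\ft\colon\fI(\ul 1)\to\fI(\ul 0)$ is the cospan $\ul 1\xrightarrow{=}\ul 1\ot\ul 0$; self-duality of $\fI(\ul 1)$ with the pairing $\ft\circ\fI(\mu)$ holds in any cospan $\infty$-category. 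By functoriality of $\CFrob$ this tautological algebra $\theta\in\CFrob(\Gr)$ induces, for every $\cC\in\CAlg(\Catinfty^{\fU_i})$, a natural map $\Fun^\otimes(\Gr,\cC)^\simeq\to\CFrob(\cC)$, $G\mapsto G_*\theta$, equivalently a symmetric monoidal functor $\Phi\colon\mathrm{Fr}\to\Gr$ from the free symmetric monoidal $\infty$-category on a commutative Frobenius algebra; the theorem asserts that $\Phi$ is an equivalence, which is what I would prove.

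Essential surjectivity is clear, since every object of $\Gr$ is equivalent to $\ul 1^{\sqcup n}$. That $\Phi$ is surjective on components of mapping spaces follows from a handle-decomposition argument: given $\ul m\to W\ot\ul n$ in $\Gr$ with $W$ a finite $1$-complex, choose a relative CW structure on $(W,\ul n)$ and an ordering of its cells; attaching a $0$-cell is $(-)\sqcup(\text{unit})$, attaching a $1$-cell is a composite of a comultiplication and a multiplication, and a $1$-cell whose endpoints already lie in one component contributes in addition a \emph{handle}, i.e.\ the cospan $\mu\circ\Delta=(\ul 1\to S^1\ot\ul 1)$, which — unlike in $\Cospan(\Fin)$, where the same composite is $\mathrm{id}_{\ul 1}$ — is not equivalent to the identity. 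Thus every morphism of $\Gr$ is a composite of Frobenius structure maps up to equivalence, hence in the image of $\Phi$. The crux is to upgrade this to an \emph{equivalence} $\mathrm{Fr}(\ul m,\ul n)\xrightarrow{\ \simeq\ }\Gr(\ul m,\ul n)$ of spaces for all $m,n$.

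I would reduce that to a single statement about handle spaces. Postcomposing $\Phi$ with the symmetric monoidal functor $\pi_0\colon\Gr\to\Cospan(\Fin)$, $W\mapsto\pi_0 W$ (monoidal because $\pi_0$ preserves coproducts and pushouts), gives $\mathrm{Fr}\xrightarrow{\Phi}\Gr\xrightarrow{\pi_0}\Cospan(\Fin)$ whose composite classifies the tautological \emph{special} commutative Frobenius algebra; here I would first establish — the $\infty$-categorical lift of the classical fact that $\Cospan(\Fin)$ is the free symmetric monoidal $\infty$-category on a special commutative Frobenius algebra, via a universal property of cospan $\infty$-categories — so that $\mathrm{Fr}\to\Cospan(\Fin)$ is the ``specialisation'' quotient. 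It then suffices to compare the fibres of the two right-hand maps over each cospan of finite sets $[f]\colon\ul m\to S\ot\ul n$. On the $\Gr$-side the fibre is the space of finite $1$-complexes $W$ with $\pi_0 W=S$ under $\ul m\sqcup\ul n$; splitting $W$ over $S$, it is a product over $s\in S$ of spaces of connected finite graphs with prescribed marked points and first Betti number $g_s$, which by Culler--Vogtmann theory (contractibility of Outer space) are classifying spaces of $\Out(F_{g_s})$ (or $\Aut(F_{g_s})$ in the pointed case) — the same input that later locates $\bB\Out(F_n)$ in $\fM_\Gr(\emptyset,\emptyset)$. On the $\mathrm{Fr}$-side the fibre is, by the specialisation description, the space of ways of inserting $g_s$ handles on each block together with their coherent symmetries. \textbf{The main obstacle is to identify this ``space of $g$ handle insertions'' in the free Frobenius PROP with $\bB\Out(F_g)$} (resp.\ $\bB\Aut(F_g)$): one must show that the coherences of the Frobenius axioms — associativity, (co)commutativity, the Frobenius relation and their higher homotopies, which incarnate the handle slides — trivialise exactly $\bB\Out(F_g)$-worth of symmetry and no more. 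Concretely I would build from the generators and relations of $\mathrm{Fr}$ an explicit simplicial/CW model of the handle space and match it to a known contractible model of Outer space (its spine, or a presentation of $\Out(F_g)$), making the comparison a levelwise equivalence; once handle spaces match, $\Phi$ is an equivalence fibrewise over $\Cospan(\Fin)$, hence an equivalence.

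The ``similarly'' clause is then formal: $|-|\colon\Catinfty^{\fU_i}\to\cS^{\fU_i}$ is symmetric monoidal and left adjoint to the inclusion, so $\Fun^\otimes(|\Gr|,\cD)^\simeq\simeq\Fun^\otimes(\Gr,\cD)^\simeq\simeq\CFrob(\cD)$ for $\cD\in\CAlg(\cS^{\fU_i})$; and when $\cD$ is an $\infty$-groupoid, $\fI\colon\Fin\to\cD$ factors through $|\Fin|\simeq*$ (as $\Fin$ has an initial object), so the only surviving datum of a commutative Frobenius algebra in $\cD$ is the morphism $\ft\colon\fI(\ul 1)\to\fI(\ul 0)=\one$, which is a loop of $\cD$ at $\one$; checking that the remaining duality witnesses form contractible spaces gives $\CFrob(\cD)\simeq\Omega\cD$, and unwinding shows the equivalence is evaluation at the image of $\ul 1\to\ul 1\ot\ul 0$, as claimed.
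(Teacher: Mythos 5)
The paper does not prove this statement: it is cited from Barkan--Steinebrunner, with a pointer to an alternative proof in the author's companion paper via the symmetric monoidal $(\infty,2)$-category $\bGr$. There is therefore no internal proof to compare against; I can only assess your proposal on its own terms.

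Your preliminaries are sound. The tautological commutative Frobenius algebra in $\Gr$ is correctly identified, the zig-zag identities do hold in $\Cospan(\cS)$ (the pushout $\ul2\sqcup_{\ul3}\ul2$ entering the snake composite is a contractible tree, not a circle), and the induced $\Phi\colon\mathrm{Fr}\to\Gr$ is essentially surjective and $\pi_0$-surjective on mapping spaces by a handle-decomposition argument. Your treatment of the ``similarly'' clause is also essentially right, modulo one imprecision: being a duality datum is a \emph{property} in Definition~\ref{defn:duality}, not a datum, so there is no ``space of duality witnesses'' to be shown contractible; the condition is simply satisfied for every $\ft\in\Omega\cD$ because $\ft$ is invertible. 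Combining $|\Fin|\simeq*$ with the universal property of $(\Fin,\sqcup)$ as the free symmetric monoidal $\infty$-category on a commutative algebra does collapse the $\fI$-datum, leaving $\ft$ as a loop at the unit, and one checks the equivalence is evaluation at $\ul1\to\ul1\ot\ul0$.

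The gap you flag, however, is not a technical loose end; it \emph{is} the theorem, and your sketch does not close it. The fundamental obstruction is that $\mathrm{Fr}$ is defined only by an $\infty$-categorical universal property, so its mapping spaces carry no a priori combinatorial description; in particular there is no finite list of ``generators, relations and higher coherences'' to assemble into a CW model. An $\infty$-categorical PROP involves an infinite hierarchy of coherences, and there is no algorithm extracting a concrete simplicial model of your ``handle space'' inside $\mathrm{Fr}$ from which a levelwise comparison to the spine of Outer space could even be launched: producing such a model and showing it is equivalent to the corresponding moduli space of graphs is, up to reformulation, precisely the content one must establish, so the comparison you outline presupposes the combinatorial control it is meant to prove. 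The intuition that higher Frobenius coherences encode handle slides and that Culler--Vogtmann contractibility should enter is a good one (Outer space does appear downstream, e.g.\ in Proposition~\ref{prop:GuirardelLevitt} and in locating $\bB\Out(F_n)$ inside $\fM_\Gr(\emptyset,\emptyset)$), but as written this is a research programme rather than a proof, and neither Barkan--Steinebrunner nor the companion paper proceed by the fibrewise-over-$\Cospan(\Fin)$, Outer-space comparison you propose.
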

We refer to \cite[Remark 3.2]{Bianchi:graphcobset} for a comparison between the second part of Theorem \ref{thm:BarkanSteinebrunner} and the work of Galatius \cite{Galatius}; Jan Steinebrunner made us aware of this connection.

\begin{rem}
\label{rem:bothhomotopies}
Given a symmetric monoidal $\infty$-functor $\fI\colon\Fin\to\cC$ and a morphism $\ft\colon \fI(\ul1)\to\fI(\ul0)\simeq\one$, to check whether $(\fI,\ft)$ is a commutative Frobenius algebra in $\cC$ , after setting $x:=\fI(\ul1)$ and $c:=\ft\circ\fI(\mu)$, we need to select a morphism $u\colon \one\to x^{\otimes2}$ and prove that there exist both homotopies $\Id_x\simeq(c\otimes\Id_x)(\Id_x\otimes u)$ and $\Id_x\simeq(\Id_x\otimes c)(u\otimes\Id_x)$. However, if we manage to endow $u$ with a $C_2$-equivariant structure with respect to the trivial $C_2$-action on $\one$ and the $C_2$-action on $x^{\otimes2}$ swapping the factors, then the morphisms $(c\otimes\Id_x)(\Id_x\otimes u)$ and $(\Id_x\otimes c)(u\otimes\Id_x)$ automatically lie in the same path component of $\cC(x,x)$, so that it suffices to prove only the existence of the first, or of the second, homotopy.
\end{rem}

An alternative proof of Theorem \ref{thm:BarkanSteinebrunner} is given in \cite{Bianchi:graphcobset}; in the last work we introduced a symmetric monoidal $\inftwo$-category $\bGr$, which refines $\Gr$ in the sense that we have an equivalence of symmetric monoidal $\infty$-categories $|\bGr|_2\simeq \Gr$. 

\section{Graph cobordisms between spaces and local coefficient systems}
\label{sec:loccoeffgraphcobspaces}
In this section we introduce the symmetric monoidal $\infty$-category $\GrCob$, as well as the symmetric monoidal functor $\xi_d^R\colon\GrCob\to\bB\Pic(R)$ providing ``compatible'' local coefficient systems (with values invertible $R$-modules) over the morphism spaces of $\GrCob$. The functor $\xi_d^R$ depends on $d\in\Z$ and is natural in $R\in\CAlg(\Sp)$.

\subsection{Two categories of graph cobordisms between spaces}
\label{subsec:graphcobspaces}
We introduce two symmetric monoidal $\infty$-categories of graph cobordisms between spaces $\Gr_\cS$ and $\GrCob$. The first is a pullback of symmetric monoidal $\infty$-categories, the second is a localisation of the first.
\begin{defn}
\label{defn:GrS}
Recall the $\infty$-category $\Gr$ from Subsection \ref{subsec:CFrob}. We define $\Gr_\cS$ (respectively $\bGr_\cS$) 
as the symmetric monoidal $\infty$-category obtained as pullback of the following diagram in $\CAlg(\Catinfty^{\fU_2})$:
\[
\begin{tikzcd}[row sep=12pt]
\Gr_\cS\ar[r,"\cF"]\ar[d,"\cG"']\ar[dr,phantom,"\lrcorner",very near start]&((\bCospan(\cS))_{/\emptyset}^\rlax)^\twosimeq\ar[d,"d_1"]\\
\Gr\ar[r,hook]& \Cospan(\cS).
\end{tikzcd}
\]
\end{defn}
In order to reinterpret $\Gr_\cS$ in a more familiar way, we consider the symmetric monoidal $\infty$-category $\Cospan(\Fun([1],\cS))$.
\begin{nota}
\label{nota:D0D1}
We denote by $D_0,D_1\colon \Cospan(\Fun([1],\cS))\to\Cospan(\cS)$ the two symmetric monoidal functors induced by $d_0,d_1\colon\Fun([1],\cS)\to\cS$, respectively.
\end{nota}

We note that an object of $\Gr_\cS$ is a cospan $A\to X\ot \emptyset$, where $A$ is a finite set and $X$ is any space: this datum is essentially given by the arrow $A\to X$, i.e. by an arrow of spaces whose source is a finite set; we consider this as a particular kind of object in $\Cospan(\Fun([1],\cS))$. We further note that a morphism in $\Gr_\cS$ from $(B\to Y)$ to $(A\to X)$ is a morphism $G\in\Gr(B,A)$
together with a square in the $\inftwo$-category $\Cospan(\cS)$ as the following diagram:
\[
\begin{tikzcd}[row sep=10pt]
B\ar[r]\ar[d] &G& A\ar[l]\ar[d]\\
Y& & X\\
\emptyset\ar[r,equal]\ar[u]\ar[uurr,Rightarrow]&\emptyset&\emptyset.\ar[l,equal]\ar[u]
\end{tikzcd}
\]
Concretely, the left-bottom composition of the previous diagram, as a morphism in $\Cospan(\cS)$, is given by the cospan of spaces $B\to Y\ot\emptyset$, whereas the top-right composite is given by $B\to W\ot \emptyset$, where we set $W:= X\sqcup_AG$. Using that $\emptyset\in\cS$ is initial, we obtain the validity of the following notation.
\begin{nota}
\label{nota:morGrS}
We usually represent a morphism in $\Gr_\cS$ from $(B\to Y\ot\emptyset)$ to $(A\to X\ot\emptyset)$ as a commutative diagram in $\cS$ as follows:
\[
\begin{tikzcd}[row sep=10pt]
B\ar[d]\ar[r] &G\ar[d]&A\ar[l]\ar[d]\\
Y\ar[r] &W \ar[ur,phantom,very near start,"\urcorner"]&X.\ar[l]
\end{tikzcd}
\]
\end{nota}
The right square in the diagram in Notation \ref{nota:morGrS} has the property of being a pushout; if we interpret the same diagram as a morphism in $\Cospan(\Fun([1],\cS))$,its image along $D_1$ satisfies the additional property of lying in $\Gr\subset\Cospan(\cS)$. We obtain the following, which we state as a lemma for future reference.
\begin{lem}
\label{lem:rpo}
The symmetric monoidal $\infty$-category $\Gr_\cS$ is equivalent to the wide symmetric monoidal $\infty$-subcategory of $D_1^{-1}(\Gr)\subset\Cospan(\Fun([1],\cS))$ spanned by morphisms represented by a diagram as in Notation \ref{nota:morGrS} whose right square is a pushout square.
\end{lem}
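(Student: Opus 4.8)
The plan is to promote the informal identification of objects and morphisms of $\Gr_\cS$ given in the discussion preceding Notation \ref{nota:morGrS} to an equivalence of symmetric monoidal $\infty$-categories. Since, by Definition \ref{defn:GrS}, $\Gr_\cS\simeq\Gr\times_{\Cospan(\cS)}((\bCospan(\cS))_{/\emptyset}^\rlax)^\twosimeq$, the pullback being formed along $\Gr\hto\Cospan(\cS)$ and along $d_1$, it suffices to prove the following claim: the $\infty$-category $((\bCospan(\cS))_{/\emptyset}^\rlax)^\twosimeq$ is equivalent, as a symmetric monoidal $\infty$-category over $\Cospan(\cS)$ --- with structure functor $d_1$ on the left and $D_1$ on the right --- to the wide symmetric monoidal $\infty$-subcategory $\mathcal{E}\subseteq\Cospan(\Fun([1],\cS))$ spanned by all objects and by those morphisms whose ``right square is a pushout square''. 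One first checks that $\mathcal{E}$ is indeed a symmetric monoidal subcategory: the pushout condition is a property of the square, hence defines a union of components of each mapping space; it holds for identities; it is stable under composition by the pasting law for pushouts, composition in $\Cospan$ being computed by a pushout of apices; and it is stable under the coproduct monoidal product since coproducts commute with pushouts. Granting the claim, pulling back along $\Gr\hto\Cospan(\cS)$ identifies $\Gr_\cS$ with $\mathcal{E}\cap D_1^{-1}(\Gr)$, which is precisely the wide subcategory of $D_1^{-1}(\Gr)$ spanned by the morphisms of Notation \ref{nota:morGrS} with pushout right square.

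To prove the claim I would unwind $((\bCospan(\cS))_{/\emptyset}^\rlax)^\twosimeq$ using the description of morphism $\infty$-categories in $\bCospan(\cS)$ (Subsection \ref{subsec:cospansspans}) and of $\rlax$ comma $\infty$-categories (Subsection \ref{subsec:deloopingGray} and Notation \ref{nota:laxcomma}). Since $\emptyset\in\cS$ is initial, a $1$-morphism of $\bCospan(\cS)$ with target $\emptyset$ is the same datum as an arrow $c\to w$ in $\cS$, so the objects of $(\bCospan(\cS))_{/\emptyset}^\rlax$ are exactly the objects of $\Fun([1],\cS)$; and a $1$-morphism from $(c\to w)$ to $(c'\to w')$ consists of a cospan $c\to G\ot c'$ in $\cS$ together with the $\rlax$-filling $2$-cell of $\bCospan(\cS)$, which, again because $\emptyset$ is initial, is precisely a map $w\to G\sqcup_{c'}w'$ lying under $c$, equivalently a morphism in $\cS_{c/}$. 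This is exactly the data of a morphism $(c\to w)\to(G\to G\sqcup_{c'}w')\ot(c'\to w')$ of $\Cospan(\Fun([1],\cS))$ whose right square is the defining pushout; conversely, for a right-square-pushout morphism of $\mathcal{E}$ the apex of the middle cospan, together with the identification realising the pushout, contributes only a contractible space of choices, so the $\Cospan(\Fun([1],\cS))$-data is no more than the $\rlax$-comma data. I would package this matching into a symmetric monoidal functor $\mathcal{E}\to((\bCospan(\cS))_{/\emptyset}^\rlax)^\twosimeq$ over $\Cospan(\cS)$, sending $(c\to w)$ to itself and a morphism to its $d_1$-image equipped with the pushout-comparison $2$-cell, and then conclude it is an equivalence: it is evidently essentially surjective, it is symmetric monoidal and compatible with $d_1$ and $D_1$ by construction, and the analysis above shows it induces an equivalence on each mapping space, the only extra datum on the $\mathcal{E}$-side --- a chosen apex together with a pushout identification --- being parametrised by a contractible space.

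The main obstacle is the coherent, $\inftwo$-categorical bookkeeping hidden behind ``unwinds to'': one must genuinely build the comparison as a functor, not merely match objects, $1$-morphisms and $2$-morphisms by hand, and one must keep track of the $\rlax$ orientation of the filling $2$-cells, which is exactly what forces the direction $w\to G\sqcup_{c'}w'$, i.e.\ the map $Y\to W$ in Notation \ref{nota:morGrS}, rather than a map $W\to Y$. I would address this either by producing the functor $\mathcal{E}\to((\bCospan(\cS))_{/\emptyset}^\rlax)^\twosimeq$ directly from the universal properties of the cospan construction and of the $\rlax$ comma (the latter as a value of the right adjoint to $-\circledast[1]$), or, more economically, by reducing the claim to an equivalence of the associated mapping spaces together with the compatibility of composition, and invoking that an $\infty$-category is determined by this data. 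Once the functor and the mapping-space equivalence are established, essential surjectivity, symmetric monoidality, and the passage to $\Gr_\cS$ by base change along $\Gr\hto\Cospan(\cS)$ are all formal.
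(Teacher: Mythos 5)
Your proposal is correct and takes essentially the same route as the paper, which in fact offers no proof environment for this lemma at all: the paper presents the identification of objects and morphisms in the discussion preceding Notation \ref{nota:morGrS} and states the lemma ``for future reference''. Your proposal faithfully formalizes exactly that discussion: unwinding objects and $1$-morphisms of $(\bCospan(\cS))_{/\emptyset}^\rlax$ using the initiality of $\emptyset$, computing that the $\rlax$-orientation forces the filling $2$-cell to be a map $w\to G\sqcup_{c'}w'$ under $c$, matching this against a cospan in $\Fun([1],\cS)$ whose right square is a pushout (with the apex contributing only a contractible space of choices), and then pulling back along $\Gr\hto\Cospan(\cS)$. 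Your identified ``main obstacle'' --- promoting the objectwise and morphismwise matching to an actual $\infty$-functor --- is the genuine gap that the paper also elides, and both of your suggested strategies (universal property of the right adjoint to $-\circledast[1]$, or a mapping-space argument) are reasonable ways to close it.
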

Given a morphism in $\Gr_\cS$ as in Notation \ref{nota:morGrS}, we can think of its image along $D_0$, i.e. the cospan $Y\to W\ot X$, as a graph cobordism between the spaces $Y$ and $X$ as in Definition \ref{defn:graphcob}: the space $W$ is obtained from $X$ by attaching the graph $G$, and the space $Y$ is endowed with a map to $W$. 
The main discrepancy between Definition \ref{defn:graphcob} and the above description of morphisms in $\Gr_\cS$ is the presence of the finite sets $B$ and $A$. To get rid of this ``redundant'' data, we introduce the second $\infty$-category of graph cobordisms between spaces.
\begin{defn}
\label{defn:GrCob}
A morphism in $\Gr_\cS$ is \emph{idle} if its image along $D_0$ is an equivalence; concretely, representing such morphism as a diagram as in Notation \ref{nota:morGrS}, we are requiring that both maps of spaces $Y\to W\ot X$ are equivalences.

Idle morphism form a wide symmetric monoidal $\infty$-subcategory of $\Gr_\cS$.
We let $\GrCob$ denote the symmetric monoidal localisation of $\Gr_\cS$ at idle morphisms.
\end{defn}
\begin{rem}
\label{rem:idleshape}
Consider a morphism in $\Gr_\cS$ as in Notation \ref{nota:morGrS}.
It is clear that if $A\to G$ is an equivalence, then also $X\to W$ is an equivalence. Conversely,
if $X\to W$ is an equivalence, it is an (integral) homology equivalence; this implies that $A\to G$ is an integral homology equivalence, and since $G$ is equivalent to a finite cell complex of dimension $\le1$, this implies that $A\to G$ is in fact an equivalence of spaces.
All in all, we may think of an idle morphism as being just of the following form:
\[
\begin{tikzcd}[row sep=10pt]
B\ar[r]\ar[d]&A\ar[r,equal]\ar[d]&A\ar[d]\\
Y\ar[r,"\simeq"]&X\ar[r,equal]&X.
\end{tikzcd}
\]
In particular, taking $X=Y$ and $B=\emptyset$, we obtain that every object $(A\to X)\in\Gr_\cS$ is connected by some idle morphism to an object of the form $(\emptyset\to X)$. This justifies our thinking of objects in $\GrCob$ as being just spaces; in fact we will prove in Corollary \ref{cor:GrCobcore} the equivalence of core groupoids $\cS^\simeq\simeq\GrCob^\simeq$.
\end{rem}
\begin{rem}
\label{rem:D0localises}
By Definition \ref{defn:GrCob}, the symmetric monoidal functor $D_0\colon\Gr_\cS\to\Cospan(\cS)$ factors through a localised symmetric monoidal functor that we shall still denote $D_0\colon\GrCob\to\Cospan(\cS)$. For $Y,X\in\cS$, the induced map on morphism spaces $D_0\colon\fM_\Gr(Y,X)\to\Cospan(\cS)(Y,X)$ is easily seen to be an injection on $\pi_0$, with essential image spanned by those cospans $Y\to Y\ot X$ with the property that $W$ can be obtained from $X$ by attaching finitely many 0-cells and 1-cells. We will see in Proposition \ref{prop:GuirardelLevitt} that the above map of morphism spaces is in fact a subspace inclusion, provided that $X$ is aspherical.
\end{rem}

\begin{defn}
For spaces $X$ and $Y$ we define $\fM_\Gr(Y,X)$ as the mapping space
\[
\fM_\Gr(Y,X):=\GrCob(\emptyset\to Y,\emptyset\to X).
\]
\end{defn}
In Section \ref{sec:GrCobspaces} we will give a more concrete description of the homotopy type of $\fM_\Gr(Y,X)$ for $X,Y\in\cS$.

\subsection{Coefficient systems in the Picard space}
\label{subsec:picard}
In this subsection we define, for an $E_\infty$-ring spectrum $R$ and an integer $d\in\Z$, a symmetric monoidal $\infty$-functor $\xi_d^R\colon\Gr\to \bB\Pic(R)$. Since $\bB\Pic(R)$ is a symmetric monoidal \emph{space}, by Theorem \ref{thm:BarkanSteinebrunner} we have equivalences of spaces 
\[
\Fun^\otimes(\Gr,\bB\Pic(R))^\simeq\simeq\Fun^\otimes(|\Gr|,\bB\Pic(R))^\simeq\simeq\Omega\bB\Pic(R)\simeq\Pic(R).
\]
\begin{defn}
\label{defn:xidR}
For $d\in\Z$ we let $\xi_d^R\colon\Gr\to\bB\Pic(R)$ denote the symmetric monoidal $\infty$-functor corresponding to $R[d]\in\Pic(R)$ along the above equivalence.

By abuse of notation we also denote by $\xi_d^R$ both the composite symmetric monoidal functor $\xi_d^R\circ\cG\colon\Gr_\cS\to\bB\Pic(R)$,and the induced symmetric monoidal functor on localisation $\xi_d^R\colon\GrCob\to\bB\Pic(R)$; here we use that \emph{all morphisms} in $\Gr_\cS$, and in particular idle morphisms, are sent to invertible morphisms in $\bB\Pic(R)$.
\end{defn}

\begin{rem}
\label{rem:xiringfunctorial}
The construction of $\xi_d^R$ is evidently functorial in $R\in\CAlg(\Sp)$; moreover, since $R[d]\simeq R[1]^{\otimes d}$, we have that $\xi_d^R$ agrees with the composition of $\xi_d^1$ and the symmetric monoidal self map of $\bB\Pic(R)$ raising morphisms to their $d$\textsuperscript{th} tensor power.
\end{rem}
\begin{rem}
\label{rem:compatibleloccoef}
For $X,Y\in\cS$, the functor $\xi_d^R$ sends $X,Y\mapsto*\in\bB\Pic(R)$ and hence it induces a map of morphism spaces $\fM_\Gr(X,Y)\to\Pic(R)$. We interpret the latter as a local coefficient system over the space $\fM_\Gr(X,Y)$, valued in invertible $R$-modules. These local coefficient systems are ``compatible'' with each other in the sense that they assemble into the symmetric monoidal functor $\xi_d^R$. The homology groups $H_*(\fM_\Gr(Y,X);\xi_d^R)$ will parametrise the higher string operations from Corollary \ref{cor:A1}.
\end{rem}

If $R$ is a discrete commutative ring, then there is a remarkably explicit description of the functor $\xi_d^R$. Relying on Remark \ref{rem:xiringfunctorial}, we will focus on the case $R=\Z$; the cases of other discrete rings can be obtained by base-change. The following property of $\Z$ will make the discussion simpler: the natural functor $\Pic(\Mod_\Z(\Ab^\Z))\to\Pic(\Z)$ is an equivalence, where the source is the plain groupoid of invertible graded $\Z$-modules in abelian groups, and the target is the $\infty$-groupoid of invertible $\Z$-modules in spectra. Very explicitly, we have an equivalence of $\infty$-groupoids $\Pic(\Z)\simeq\coprod_{d\in\Z}\bB C_2$: the object $\Z[d]$ represents the $d$\textsuperscript{th} component, and has automorphism group $C_2$, generated by $-\Id_{\Z[d]}$. In particular $\bB\Pic(\Z)$ is a 2-category, all of whose 1-morphisms and 2-morphisms are invertible.

Given $G\in\Gr(B,A)$, we can compute the relative homology groups $H_i(G,A)=H_i(G,A;\Z)$; these groups are finitely generated, free $\Z$-modules, and they vanish for $i\neq0,1$. 
We have functors of $\infty$-groupoids
\[
H_i(-,A)\colon\Gr(B,A)\to(\Mod_\Z(\Ab)^{\mathrm{f.g.free}})^\simeq,\quad i=0,1,
\]
where the target is the core groupoid of the category of finitely generated, free $\Z$-modules in abelian groups. Putting $\Z$-modules in degree 1 gives a functor $(-)[1]\colon(\Mod_\Z(\Ab)^{\mathrm{f.g.free}})^\simeq\to(\Mod_\Z(\Ab^\Z)^{\mathrm{f.g.free}})^\simeq$, and by composition we obtain two functors $H_i(-,A)[1]$, for $i=0,1$.
We may now consider the determinant functor
\[
\det\colon (\Mod_\Z(\Ab^\Z)^{\mathrm{f.g.free}})^\simeq\to\Pic(\Mod_\Z(\Ab^\Z))\simeq\Pic(\Z),
\]
sending a finitely generated, $\Z$-graded free $\Z$-module to its maximal non-zero exterior power. By composition we obtain two functors $\det(H_i(-,A)[1])$.

We shall consider the functor $\hat\xi_d^\Z\colon\Gr(B,A)\to\Pic(\Z)$ given by
\[
\hat\xi_d^\Z:=\det(H_0(-,A)[1])^{\otimes d}\otimes \det(H_1(-,A)[1])^{\otimes -d}.
\]
The following observations are similar to \cite[Lemma 13]{Godin}:
\begin{itemize}
\item if $G\in\Gr(B,A)$ and $G'\in\Gr(C,B)$, then we have a long exact sequence
\[
\begin{tikzcd}
0\to H_1(G,A)\ar[r]&H_1(G\circ G',A)\ar[r] &H_1(G',B)\ar[dll]\\
H_0(G,A)\ar[r]&H_0(G\circ G',A)\ar[r] &H_0(G',B)\to0
\end{tikzcd}
\]
giving rise to an isomorphism of tensor products
\[
\begin{split}
\det(H_1(G,A)[1])\otimes \det(H_1(G',B)[1])\otimes \det(H_0(G\circ G',A)[1])\cong\\
\det(H_1(G\circ G',A)[1])\otimes \det(H_0(G,A)[1])\otimes \det(H_0(G',B)[1]),
\end{split}
\]
which, after taking $d$\textsuperscript{th} tensor powers and reassembling the terms, can be rewritten as $\hat\xi_d^\Z(G\circ G')\simeq\hat\xi_d^\Z(G')\otimes\hat\xi_d^\Z(G)$;
\item we similarly have natural isomorphisms $\hat\xi_d^\Z(G\sqcup G')\cong\hat\xi_d^\Z(G)\otimes\hat\xi_d^\Z(G')$ for $G\in\Gr(B,A)$ and $G'\in\Gr(B',A')$.
\end{itemize}
Using that the morphism spaces of $\bB\Pic(\Z)$ are aspherical, one can easily check that the above assignments give therefore rise to a symmetric monoidal functor of symmetric monoidal $\infty$-categories $\hat\xi_d^\Z\colon\Gr\to \bB\Pic(\Z)$. For a generic discrete ring $R$ we can then postcompose $\hat\xi_d^\Z$ with the functor $\bB\Pic(\Z)\to \bB\Pic(R)$ induced by the ring map $\Z\to R$, thus obtaining a symmetric monoidal $\infty$-functor $\hat\xi_d^R\colon\Gr\to \bB\Pic(R)$.
\begin{prop}
\label{prop:xihatxi}
For a discrete ring $R$ and any $d\ge0$, there is an equivalence of symmetric monoidal $\infty$-functors
\[
\hat\xi_d^R,\xi_d^R\colon\Gr\to \bB\Pic(\Mod_R).
\]
\end{prop}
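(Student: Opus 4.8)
The plan is to deduce the statement from the representability of $\Gr$ in Theorem~\ref{thm:BarkanSteinebrunner}. Since $\bB\Pic(R)$ is a symmetric monoidal space, that theorem furnishes equivalences of spaces
\[
\Fun^\otimes(\Gr,\bB\Pic(R))^\simeq\simeq\Fun^\otimes(|\Gr|,\bB\Pic(R))^\simeq\simeq\Omega\bB\Pic(R)\simeq\Pic(R),
\]
whose composite is evaluation at the morphism $\rho\colon\ul1\to\ul1\ot\ul0$ in $\Gr(\ul1,\ul0)$. Now $\xi_d^R$ and $\hat\xi_d^R$ are both symmetric monoidal functors $\Gr\to\bB\Pic(R)$ (for $\hat\xi_d^R$ this is part of its construction above), hence define points of the source of this equivalence; since the source is an $\infty$-groupoid and the displayed map is an equivalence, an equivalence $\hat\xi_d^R\simeq\xi_d^R$ exists if and only if $\hat\xi_d^R(\rho)$ and $\xi_d^R(\rho)$ lie in the same component of $\Pic(R)$, i.e.\ are equivalent as objects. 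By Definition~\ref{defn:xidR} we have $\xi_d^R(\rho)\simeq R[d]$, so the whole proof reduces to the computation $\hat\xi_d^R(\rho)\simeq R[d]$.

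I would perform this computation first for $R=\Z$. In the notation $G\in\Gr(B,A)$ used in the construction of $\hat\xi_d^\Z$, the morphism $\rho$ has $B=\ul1$, $A=\ul0=\emptyset$, and underlying space $G$ contractible; hence the relative homology is $H_0(G,A)\cong\Z$ and $H_1(G,A)=0$. Using that the determinant of a free module of rank one placed in degree $1$ is that module itself, and that the determinant of the zero module is the unit $\Z[0]$, we obtain
\[
\hat\xi_d^\Z(\rho)=\det(H_0(G,A)[1])^{\otimes d}\otimes\det(H_1(G,A)[1])^{\otimes -d}\simeq(\Z[1])^{\otimes d}\otimes(\Z[0])^{\otimes -d}\simeq\Z[d],
\]
so $\hat\xi_d^\Z\simeq\xi_d^\Z$ by the first paragraph.

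For a general discrete ring $R$, recall that $\hat\xi_d^R$ is by construction the postcomposition of $\hat\xi_d^\Z$ with the symmetric monoidal functor $\bB\Pic(\Z)\to\bB\Pic(R)$ induced by $\Z\to R$, and that by Remark~\ref{rem:xiringfunctorial} the same holds for $\xi_d^R$. Postcomposing the equivalence $\hat\xi_d^\Z\simeq\xi_d^\Z$ thus yields $\hat\xi_d^R\simeq\xi_d^R$.

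I do not expect a serious obstacle. The only delicate input is the verification --- already carried out in the construction preceding the proposition --- that the assignment $G\mapsto\det(H_0(G,A)[1])^{\otimes d}\otimes\det(H_1(G,A)[1])^{\otimes -d}$ assembles into an honest symmetric monoidal $\infty$-functor, which relies on the asphericity of the morphism spaces of $\bB\Pic(\Z)$; the small bookkeeping point is the normalisation of $\det$ in the displayed computation, so that the positive and negative tensor powers combine exactly to $\Z[d]$ rather than to a shift of it. Since we only need the \emph{existence} of an equivalence of symmetric monoidal functors --- equivalently, agreement of the two evaluations at $\rho$ up to a path in $\Pic(R)$ --- the $C_2$ of automorphisms of $\Z[d]$ in $\Pic(\Z)$ plays no role.
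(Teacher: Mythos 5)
Your proof is correct and takes essentially the same route as the paper: invoke Theorem~\ref{thm:BarkanSteinebrunner} to reduce the comparison to evaluation at the generating morphism $\ul1\to\ul1\ot\ul0\in\Gr(\ul1,\ul0)$, then compute both evaluations to be $R[d]$. The only cosmetic difference is that you first verify the case $R=\Z$ and deduce the general case via base-change along $\Z\to R$, while the paper computes $\det(H_0(\ul1;R)[1])^{\otimes d}\simeq R[d]$ directly for general discrete $R$; both are valid, and your final remark that only the path component, not the $C_2$ of automorphisms, matters is a correct and useful observation.
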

\begin{proof}
By Theorem \ref{thm:BarkanSteinebrunner}, it suffices to provide an equivalence in $\Pic(\Mod_R)\simeq\Omega\bB\Pic(\Mod_R)$ between the evaluations of $\hat\xi_d^R$ and $\xi_d^R$ at the morphism $\ul1\to\ul1\ot\ul0\in\Gr(\ul1,\ul0)$: the first evaluation is by computation $\det(H_0(\ul1;R)[1])^{\otimes d}\simeq R[d]$, whereas the second evaluation is by definition $R[d]$.
\end{proof}
\begin{ex}
\label{ex:xifromliterature}
Let $n\ge2$, let $\bigvee_nS^1$ denote a wedge of $n$ circles, and consider the subspace $B\hAut(\bigvee_nS^1)\simeq B\Out(F_n)\subset\Gr(\emptyset,\emptyset)$, where $\Out(F_n)$ denotes the group of outer automorphisms of a free group of rank $n$. For a discrete ring $R$, Proposition \ref{prop:xihatxi} allows us to identify the restricted coefficient system $\xi_d^R$ on $B\Out(F_n)$ with the pullback on $\Out(F_n)$ of the sign $C_2$-representation on $R[d\cdot (1-n)]$ along the composite group homomorphism
\[
\Out(F_n)\to \Aut(F_n^{\mathrm{ab}})\cong GL_n(\Z)\overset{\det}{\to}GL_1(\Z)\cong C_2\overset{(-)^d}{\to}C_2.
\]
These representations (which are trivial for $d$ even) have played a prominent role in the literature: most notably, the homology of $\Out(F_n)$ with coefficients in $\xi_d^\Q$ is isomorphic to the homology of Kontsevich' graph complexes associated with the Lie operad \cite{Kontsevich1993,Kontsevich1994,ConantVogtmann,LazarevVoronov}.
\end{ex}

\section{Parametrised \texorpdfstring{$R$}{R}-modules over a Poincar\'e duality space}
\label{sec:ModRPoincare}
We fix $R\in\CAlg(\Sp)$ and an oriented $R$-Poincar\'e duality space $M$ throughout the section. Our goal in this section is to construct symmetric monoidal functors with values in $\infty$-categories that are closely related to $(\SPrL_R)^\twosimeq$, see Subsection \ref{subsec:SPrLR}. The main input will be the symmetric monoidal $\infty$-category $\Mod_R^M\in\CAlg(\SPrL_R)$.

\subsection{The functor \texorpdfstring{$\Psi_R(M)$}{PsiR(M)}}
\label{subsec:PsiRMdef}
In this subsection we construct a commutative Frobenius algebra in the symmetric monoidal $\infty$-category $(\SPrL_R)^\twosimeq$; we will denote by $\Psi_R(M)\colon\Gr\to\SPrL_R$ the corresponding symmetric monoidal functor.
\begin{nota}
\label{nota:delta}
We introduce notation for some maps between powers of $M$, induced along $M^{(-)}$ by maps of finite sets:
\begin{itemize}
\item We denote by $1$ the identity of $M$.
\item We denote by $\tau\colon M^{\ul2}\to M^{\ul2}$ the automorphism swapping coordinates.
\item We denote by $p=p(M)\colon M\to*$ the map obtained from $\iota\colon\ul0\to\ul1$.
\item We denote by $\Delta\colon M\to M^{\ul2}$ the diagonal map, induced by $\mu\colon\ul2\to\ul1$; \item We denote by $\bar\Delta\colon M\to M^3$ the map obtained from the unique map $\ul3\to\ul1$.
\item We denote by $\delta\colon M^{\ul2}\to M^{\ul4}=M^{\ul2\sqcup\ul2}$ the map obtained from the fold map $\ul2\sqcup\ul2\to\ul2$, i.e. the map $\ul4\to\ul2$ sending $1,3\mapsto1$ and $2,4\mapsto 2$.
\item We denote by $\partial\colon M^{\ul2}\to M^{\ul3}$ the map obtained from the map $\ul3\to\ul2$ sending $1,3\mapsto2$ and $2\mapsto1$.
\end{itemize}
\begin{nota}
To simplify notation, we remove all symbols ``$\times$'' from now on. For instance, we write ``$(p\,1)$'' for the map $(p\times 1)\colon M^{\ul2}\to M$. We also omit the symbol ``$\circ$'' denoting composition of maps and functors.
\end{nota}
Note that $\partial=(\tau\times 1)\circ(1\times\Delta)$, which we write as $(\tau\, 1)(1\,\Delta)$, and similarly $\delta=(1\,\partial)(\Delta\, 1)$.
\end{nota}
\begin{nota}
\label{nota:s_equals_Rd}
We sometimes abbreviate $R[d]=s$ and $R[-d]=\bar s$.
\end{nota}
\begin{nota}
Given a functor $F\colon \cC\to\Mod_R^X$ with values in a category of parametrised $R$-modules, and given $\zeta\in\Mod_R$, we denote by $F\otimes \zeta$ the functor obtained by pointwise tensor product of $F$ and the constant functor $\cC\to\Mod_R^X$ with value $p(X)^*(\zeta)$. 

If $\cC=\Mod_R^Y$ is also a category of parametrised $R$-modules, and if $F$ is of the form $f^*$, $f_!$ or $f_*$ for some map $f$ between $X$ and $Y$, we will identify $F\otimes\zeta$ with $F\circ(\Id_{\Mod_R^Y}\otimes\zeta)$ by the projection formula without further mention (in the case of $f_*$, we are assuming that $f$ has compact fibres).
\end{nota}

\begin{nota}
 Whenever we want to stress an action of $C_2$ on a finite set, we write the finite set as a coproduct of finite sets of the form $\ul1$ and $\ul2$. For instance, the  $C_2$-action on $\ul3=\ul2\sqcup\ul1$ swaps 1 and 2, and fixes 3.
\end{nota}
We have the following commutative diagrams of $C_2$-equivariant finite sets and spaces, respectively; the second is obtained from the first by applying $M^{(-)}$:
\[
\begin{tikzcd}[row sep=20pt,column sep=40pt]
\ul1\ar[dr,"\lrcorner",phantom, very near start]&\ul2\ar[l]\ar[d,bend left=15, "\Id_{\ul2}\sqcup\iota\sqcup\iota"]&\ul0\ar[l]
\\
\ul2\sqcup\ul1\ar[u]&\ul2\sqcup\ul2;\ar[l,"\Id_{\ul2}\sqcup\mu"']\ar[u,"\text{fold}"]
\end{tikzcd}
\hspace{1cm}
 \begin{tikzcd}[row sep=20pt,column sep=40pt]
 M\ar[r,"\Delta"']\ar[dr,"\lrcorner",phantom, very near start]\ar[d,"\bar\Delta"] &M^{\ul 2}\ar[d,"\delta"']\ar[r,"p^{\ul2}"']&*\\
 M^{\ul 2\sqcup\ul1},\ar[r,"(1^{\ul2}\,\Delta)"]
 &M^{\ul 2\sqcup\ul 2}.\ar[u,bend right=15,"(1^{\ul2}\,p^{\ul2})"']
 \end{tikzcd}
\]
We remark that the right square, involving $\bar\Delta$ and $\delta$, is cartesian, and that the composite $(1^{\ul2}\, p^{\ul2})\delta$ is homotopic to $1^{\ul2}$.
\begin{defn}
\label{defn:PsiRM}
We denote by $\Psi_R(M)\colon\Gr\to(\SPrL_R)^\twosimeq$ the symmetric monoidal $\inftwo$-functor corresponding, by virtue of Corollary \ref{cor:A1}, to the commutative Frobenius algebra $(\fI_\Psi,\ft_\Psi)$ given as follows:
\begin{itemize}
 \item We let $\fI_\Psi\colon\Fin\to(\SPrL_R)^\twosimeq$ be the symmetric monoidal functor classifying $\Mod_R^M\in\CAlg(\SPrL_R)$, endowed with the fibrewise tensor product over $R$. 
\item We let $\ft_\Psi\colon \fI_\Psi(\ul1)\to \fI_\Psi(\ul0)$ be the functor $p_*\colon\Mod_R^M\to\Mod_R$.
\end{itemize}
\end{defn}
The rest of the subsection is devoted to the proof that the data provided in Definition \ref{defn:PsiRM} indeed yield a commutative Frobenius algebra. For this we shall make use of the
$C_2$-equivariant functor $\fe_\Psi\colon \fI_\Psi(\ul0)\to \fI_\Psi(\ul2)$ given by the formula
\[
(1^{\ul2}\, p^{\ul2})_!(1^{\ul2}\,\Delta)_*\bar\Delta_! p^*\colon\Mod_R\to\Mod_R^{M^{\ul 2}}\simeq\Mod_R^M\otimes_{\Mod_R}\Mod_R^M.
\]

\begin{lem}
\label{lem:feisleftadjoint}
The functors $\ft_\Psi$ and $\fe_\Psi$ belong to $\SPrL_R$, i.e. they preserve colimits and their lax $R$-linear structure is in fact $R$-linear.
\end{lem}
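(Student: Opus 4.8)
The plan is to check each of the two functors separately. For $\ft_\Psi=p_*\colon\Mod_R^M\to\Mod_R$, this is exactly the content of the discussion in Subsection \ref{subsec:PDspaces}: since $M$ is a compact space, $p(M)_*$ preserves colimits and its lax $R$-linear structure is $R$-linear; equivalently, the chosen dualising module $\omega_M^R$ and the natural equivalence $p(M)_*\simeq p(M)_!(-\otimes_R\omega_M^R)$ exhibit $p_*$ as a composite of the $R$-linear colimit-preserving functors $-\otimes_R\omega_M^R$ and $p(M)_!$. So nothing new is needed here beyond recalling the Morita-theoretic description.

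For $\fe_\Psi=(1^{\ul2}\,p^{\ul2})_!(1^{\ul2}\,\Delta)_*\bar\Delta_!p^*$, I would argue that each of the four factors individually lies in $\SPrL_R$, so that their composite does. The outer factors $(1^{\ul2}\,p^{\ul2})_!$, $\bar\Delta_!$ and $p^*$ are $R$-linear and colimit-preserving by the general facts recalled in the subsection on parametrised $R$-modules: $f^*$ is always symmetric monoidal hence $R$-linear and (being a left adjoint) colimit-preserving, and $f_!$ is always in $\SPrL_R$. The only nontrivial factor is the pushforward $(1^{\ul2}\,\Delta)_*$, which is the functor $g_*$ for the map $g:=(1^{\ul2}\,\Delta)\colon M^{\ul2}\to M^{\ul2\sqcup\ul1}$. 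To handle this I would use that $g$ fits into the cartesian square displayed just before Definition \ref{defn:PsiRM}, namely the pullback square with corners $M$, $M^{\ul2}$, $M^{\ul2\sqcup\ul1}$, $M^{\ul2\sqcup\ul2}$, whose maps are $\Delta$, $\bar\Delta$, $(1^{\ul2}\,\Delta)$, $\delta$; the (homotopy) fibres of $(1^{\ul2}\,\Delta)$ agree with the fibres of $\bar\Delta\colon M\to M^{\ul2\sqcup\ul1}$, and those in turn are fibres of the diagonal $\Delta\colon M\to M^{\ul2}$. Since $M$ is an (oriented, in particular) $R$-Poincar\'e duality space, $\Delta$ is an $R$-Poincar\'e fibration, so in particular it has $R$-"Gorenstein" fibres; more directly, the fibre of $\Delta$ over a point is the based loop space $\Omega M$, and the relevant statement is that $p(\Omega M)_*$, and hence $\Delta_*$, preserves colimits and is $R$-linear because $M$ is compact. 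Alternatively, and more in the spirit of the paper, I would invoke the orientation: $\omega_\Delta^R\simeq p^*(R[-d])$ is invertible, so the ambidexterity/projection-formula apparatus of \cite{BHKK} gives a natural equivalence $\Delta_*\simeq\Delta_!(-\otimes_R\omega_\Delta^R)$ exhibiting $\Delta_*$ as a composite of functors in $\SPrL_R$; the same then holds after base change for $(1^{\ul2}\,\Delta)_*$.

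The main obstacle is precisely this middle step: verifying that $(1^{\ul2}\,\Delta)_*$ is colimit-preserving and $R$-linear. Once one accepts that the fibres of the diagonal of a compact ($R$-Poincar\'e) space are themselves "ambidextrous enough" — which is where compactness of $M$, or the orientation of $M$, really enters — the rest is the bookkeeping that a composite of functors in $\SPrL_R$ is again in $\SPrL_R$, together with the observation that the $C_2$-equivariant structure on $\fe_\Psi$ is compatible with these identifications (this last point is only needed for the Frobenius structure later, not for the present lemma). I expect the cleanest writeup to phrase the middle step via base change along the cartesian square above, reducing $(1^{\ul2}\,\Delta)_*$ to $\Delta_*$ and then citing the $R$-linearity of $p(\Omega M)_*$ coming from compactness of $M$ (equivalently, from $M$ being $R$-Poincar\'e), exactly as for $\ft_\Psi=p(M)_*$.
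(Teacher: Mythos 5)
Your treatment of $\ft_\Psi=p_*$ is fine, but your argument for $\fe_\Psi$ has a genuine gap: it is \emph{false} that $(1^{\ul2}\,\Delta)_*$ by itself lies in $\SPrL_R$. The homotopy fibres of $(1^{\ul2}\,\Delta)\colon M^{\ul3}\to M^{\ul4}$ agree with those of $\Delta\colon M\to M^{\ul2}$, and the latter are (torsors over) $\Omega M$, which is \emph{not} a compact space — for $M=S^1$ the fibre is $\Omega S^1\simeq\Z$, and $p(\Z)_*$ is a countable product, which does not commute with filtered colimits. So $\Delta_*$ and $(1^{\ul2}\,\Delta)_*$ fail to preserve colimits already for $M=S^1$. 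For the same reason, $\Delta$ is not an $R$-Poincar\'e fibration (its fibres are not compact Poincar\'e duality spaces), so there is no ambidexterity isomorphism $\Delta_*\simeq\Delta_!(-\otimes_R\omega_\Delta^R)$ of the kind you invoke from \cite{BHKK}; your two variants of the middle step both break down at the same place.

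The fix — which is what the paper actually does — is that the product $(1^{\ul2}\,p^{\ul2})_!\,(1^{\ul2}\,\Delta)_*$ must be handled as a single block rather than factor by factor. The map $(1^{\ul2}\,p^{\ul2})\colon M^{\ul4}\to M^{\ul2}$ \emph{is} an $R$-Poincar\'e fibration (fibres $M^{\ul2}$), so Poincar\'e duality lets you replace $(1^{\ul2}\,p^{\ul2})_!$ by $(1^{\ul2}\,p^{\ul2})_*$ up to a twist. Since $(1^{\ul2}\,p^{\ul2})\circ(1^{\ul2}\,\Delta)=(1^{\ul2}\,p)$, the two $(-)_*$'s combine to $(1^{\ul2}\,p)_*$, and $(1^{\ul2}\,p)\colon M^{\ul3}\to M^{\ul2}$ is again an $R$-Poincar\'e fibration (fibres $M$), so one converts back to $(1^{\ul2}\,p)_!$ up to another twist. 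In short, the composite is identified with $(1^{\ul2}\,p)_!(-)\otimes s^{\otimes\ul2}\otimes\bar s$, which is visibly in $\SPrL_R$. The non-compactness of the diagonal's fibres is exactly the obstacle to your factor-by-factor strategy; the paper sidesteps it because $(1^{\ul2}\,\Delta)_*$ appears sandwiched between a $(-)_!$ that it can be absorbed into.
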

\begin{proof}
Use Notation \ref{nota:s_equals_Rd}.
A priori a functor of the form $f_*$ is only a right adjoint functor and lax $R$-linear, so that neither of $\ft_\Psi$ and $\fe_\Psi$ is obviously in $\SPrL_R$. For
$\ft_\Psi=p_*$, we use that $M$ is an oriented $R$-Poincar\'e duality space and identify 
$p_*\simeq p_!(-\otimes p^*(\bar s))\simeq p_!\otimes \bar s$, where the last two expressions, related by the projection formula, are both evidently compositions of left adjoint functors.\footnote{Note that if $M$ is any compact space, we can still identify $p_*\simeq p_!(-\otimes\omega_M^R)$.}

For $\fe_\Psi=(1^{\ul2}\, p^{\ul2})_!(1^{\ul2}\,\Delta)_*\bar\Delta_! p^*$, it suffices to check that $(1^{\ul2}\, p^{\ul2})_!(1^{\ul2}\,\Delta)_*$ is in $\SPrL_R$. For this we use that, since $M$ is an oriented $R$-Poincar\'e duality space, the $C_2$-space $M^{\ul2}$ is a $C_2$-equivariantly oriented $C_2$-equivariant $R$-Poincar\'e duality space.

More precisely, we can identify $\omega_{M^{\ul2}}^R$ with the tensor product 
$(p\, 1)^*(\omega_M^R)\otimes (1\, p)^*(\omega_M^R)$ in $\Mod_R^{M^{\ul2}}$;
and we can further identify the latter with $(p^{\ul2})^*(\bar s^{\otimes 2})$, using twice the $R$-orientation on $M$. We get a $C_2$-equivariant equivalence 
\[
(p^{\ul2})_*\simeq (p^{\ul2})_!(-\otimes (p^{\ul2})^*(\bar s^{\ul2}))\simeq (p^{\ul2})_!\otimes\bar s^{\otimes\ul2},
\]
where $C_2$ acts on the functors $(p^{\ul2})_!,(p^{\ul2})_*,(p^{\ul2})^*$ by pre/postcomposition with $\tau_!\simeq\tau_*\simeq\tau^*$, and it acts on $\bar s^{\otimes 2}$ by swapping the factors.

Similarly, $(1^{\ul2}\, p)\colon M^{\ul3}\to M^{\ul2}$ is an oriented $R$-Poincar\'e fibration with $\omega^R_{(1^{\ul2}\, p)}\simeq (p^{\ul2}\, 1)^*(\omega_M)\simeq (p^{\ul3})^*(\bar s)$, and $(1^{\ul2}\, p^{\ul2})\colon M^{\ul4}\to M^{\ul2}$ is a $C_2$-equivariantly oriented $C_2$-equivariant $R$-Poincar\'e fibration with
$\omega^R_{(1^{\ul2}\, p^{\ul2})}\simeq (p^{\ul2}\,1^{\ul2})^*(\omega_{M^{\ul2}})\simeq (p^{\ul4})^*(\bar s^{\otimes\ul2})$.
We can now write a $C_2$-equivariant chain of $R$-linear equivalences of lax $R$-linear functors
\[
\begin{split}
(1^{\ul2}\, p^{\ul2})_!(1^{\ul2}\,\Delta)_*(-)&\simeq (1^{\ul2}\, p^{\ul2})_*\big((1^{\ul2}\,\Delta)_*(-)\otimes s^{\otimes\ul2}\big)
\simeq (1^{\ul2}\, p^{\ul2})_*(1^{\ul2}\,\Delta)_*(-\otimes s^{\otimes\ul2})\\
&\simeq  (1^{\ul2}\, p)_*(-\otimes s^{\otimes \ul2})
\simeq (1^{\ul2}\, p)_!(-\otimes s^{\otimes \ul2}\otimes \bar s)\\
&\simeq (1^{\ul2}\, p)_!(-)\otimes s^{\otimes \ul2}\otimes \bar s,
\end{split}
\]
and the last two expressions are evidently compositions of functors in $\SPrL_R$.\footnote{Note that if $M$ is any $R$-Poincar\'e duality space, even in the absence of an $R$-orientation we can still identify $(1^{\ul2}\, p^{\ul2})_!(1^{\ul2}\,\Delta)_*(-)$ with the analog of the second to last expression in the chain of equivalences, reading $(1^{\ul2}p)_!(-\otimes (p^{\ul2}1)^*((\omega_M^{-1})^{\otimes2}\otimes\omega_M)$.}
\end{proof}
By virtue of Remark \ref{rem:bothhomotopies}, the final
task is to provide a natural equivalence
\[
\beta_\Psi\colon ((\ft_\Psi\circ \fI_\Psi(\mu))\otimes\Id_{\fI_\Psi(\ul 1)})\circ(\Id_{\fI_\Psi(\ul 1)}\otimes \fe_\Psi)\simeq \Id_{\fI_\Psi(\ul 1)}
\]
The target of the natural equivalence can be identified with $\Id_{\Mod_R^M}$; to understand the source, we recall that $\Delta\colon M\to M^{\ul2}$ is the map induced by $\mu\colon\ul2\to\ul1$ under the contravariant functor $M^{(-)}$; applying further the functor $\Mod_R^{(-)}\colon\cS^\op\to\SPrL_R$ we obtain the identification $\fI_\Psi(\mu)\simeq\Delta^*$.
Hence, we need to define a natural equivalence $\beta_\Psi$ of endofunctors of $\Mod_R^M$ as follows:
\[
 \beta_\Psi\colon(p\, 1)_*(\Delta\, 1)^*(1^{\ul3}\, p^{\ul2})_!(1^{\ul3}\,\Delta)_*(1\, \bar\Delta)_!(1\, p)^* \xrightarrow{\simeq} \Id_{\Mod_R^M}.
\]
\begin{defn}
\label{defn:betaPsi}
We define $\beta_\Psi$ as the concatenation of the following two natural equivalences $\beta_{\Psi,1}$, on left, and $\beta_{\Psi,2}$, on right: 

\[
    \begin{tikzcd}
      (p\, 1)_*(\Delta\, 1)^*(1^{\ul3}\, p^{\ul2})_!(1^{\ul3}\,\Delta)_*(1\, \bar\Delta)_!(1\, p)^*\ar[d,"\simeq"',"BC/\text{s.m.}"]\\
  (p\, 1)_*(1^{\ul2}\, p^{\ul2})_!(1^{\ul2}\,\Delta)_*(\Delta\, 1^{\ul2})^*(1\, \bar\Delta)_!(1\, p)^* 
  \ar[d,"\simeq"',"BC"]\\
   (p\, 1)_*(1^{\ul2}\, p^{\ul2})_!(1^{\ul2}\,\Delta)_*\bar\Delta_!\Delta^*(1\, p)^*
  \ar[d,"\simeq"',"(1\, p)\Delta\simeq 1"]\\
    (p\, 1)_*(1^{\ul2}\, p^{\ul2})_!(1^{\ul2}\,\Delta)_*\bar\Delta_!;
 \end{tikzcd}
    \hspace{1cm}
     \begin{tikzcd}
      (p\, 1)_*(1^{\ul2}\, p^{\ul2})_!(1^{\ul2}\,\Delta)_*\bar\Delta_!\\
  (p\, 1)_*(1^{\ul2}\, p^{\ul2})_!\delta_!\Delta_*
  \ar[u,"BC"',"\simeq?"]\ar[d,"(1^{\ul2}p^{\ul2})\delta\simeq1^{\ul2}","\simeq"']\\
   (p\, 1)_*\Delta_*
  \ar[d,"\simeq"',"(p1)\Delta\simeq 1"] \\
  \Id_{\Mod_R^M}.
  \end{tikzcd}
\]
The first arrow of $\beta_{\Psi,1}$ is given by symmetric monoidality of $(\SPrL_R)^\twosimeq$, by which both functors $(1^{\ul2}\, p^{\ul2})_!(1^{\ul2}\,\Delta)_*(\Delta\, 1^{\ul2})^*$ and $(\Delta\, 1)^*(1^{\ul3}\, p^{\ul2})_!(1^{\ul3}\,\Delta)_*$ can be identified with the tensor product of three functors $
\Delta^*\otimes 1\otimes((p^{\ul2})_!\Delta_*)$.
The same arrow can be equivalently considered as a composition of two Beck--Chevalley equivalences, whence we have labeled it by ``$BC$/s.m.'', where ``s.m.'' stands for ``symmetric monoidality''.
The second arrow of $\beta_{\Psi,1}$
is the Beck--Chevalley equivalence given by the following pullback square, which is the image along $M^{(-)}$ of a pushout square of finite sets:

\[
\begin{tikzcd}[row sep=15pt]
 M\ar[d,"\Delta"]\ar[dr,"\lrcorner",phantom, very near start]\ar[r,"\bar\Delta"] &M^{\ul3}\ar[d,"\Delta\,1"]\\
 M^{\ul2}\ar[r,"1\,\bar\Delta"] &M^{\ul4}.
\end{tikzcd}
\]
\end{defn}
We observe that in the definition of $\beta_{\Psi,2}$, one of the occurring arrows is a Beck--Chevalley transformation going in the wrong direction; the following lemma shows that this 2-morphism, which a priori only belongs to $\SPrL_R$, is in fact an equivalence, so that the entire $\beta_\Psi$, in the end, is an invertible 2-morphism.
\begin{lem}
\label{lem:BCinvertible}
The following Beck--Chevalley transformation is invertible:
\[
\begin{tikzcd}
(p\, 1)_*(1^{\ul2}\, p^{\ul2})_!\delta_!\Delta_*\ar[r,"BC"]& (p\, 1)_*(1^{\ul2}\, p^{\ul2})_!(1^{\ul2}\,\Delta)_*\bar\Delta_!.
\end{tikzcd}
\]
\end{lem}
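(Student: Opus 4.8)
The plan is to unwind the Beck--Chevalley transformation into its defining composite (a unit, an invertible standard base-change equivalence, a counit) and then to show that postcomposition with $(p\,1)_*(1^{\ul2}\,p^{\ul2})_!$ — after trading the shriek functor for the star functor via the $R$-orientation of $M$ — forces the two non-invertible legs to collapse by the triangle identities. First I would pin down the transformation: by the construction of $\beta_{\Psi,2}$ in Definition~\ref{defn:betaPsi}, the displayed map is obtained by applying $(p\,1)_*(1^{\ul2}\,p^{\ul2})_!$ to the Beck--Chevalley transformation $\delta_!\Delta_*\to(1^{\ul2}\,\Delta)_*\bar\Delta_!$ attached, as in point~(3) of Subsection~\ref{subsec:BCprojformulae}, to the cartesian square
\[
\begin{tikzcd}[row sep=15pt]
M\ar[r,"\bar\Delta"]\ar[d,"\Delta"']\ar[dr,phantom,"\lrcorner",very near start]&M^{\ul3}\ar[d,"1^{\ul2}\,\Delta"]\\
M^{\ul2}\ar[r,"\delta"']&M^{\ul4}
\end{tikzcd}
\]
(the left-hand square of the second commutative diagram in Subsection~\ref{subsec:PsiRMdef}); explicitly it is the composite
\[
\delta_!\Delta_*\xrightarrow{\eta_{(1^{\ul2}\Delta)_*}}(1^{\ul2}\,\Delta)_*(1^{\ul2}\,\Delta)^*\delta_!\Delta_*\overset{BC}{\simeq}(1^{\ul2}\,\Delta)_*\bar\Delta_!\Delta^*\Delta_*\xrightarrow{\epsilon_{\Delta_*}}(1^{\ul2}\,\Delta)_*\bar\Delta_!,
\]
the middle map being the invertible Beck--Chevalley equivalence $(1^{\ul2}\,\Delta)^*\delta_!\overset{BC}{\simeq}\bar\Delta_!\Delta^*$ of the square.

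Next I would trivialise source and target. The source $(p\,1)_*(1^{\ul2}\,p^{\ul2})_!\delta_!\Delta_*$ is identified with $\Id_{\Mod_R^M}$ using only $(1^{\ul2}\,p^{\ul2})\delta\simeq 1$ and $(p\,1)\Delta\simeq 1$ — this is exactly what the two lower arrows of $\beta_{\Psi,2}$ record. For the target I would invoke the $R$-orientation of $M$: as in the proof of Lemma~\ref{lem:feisleftadjoint}, the projections $(p\,1)$, $(1^{\ul2}\,p)$ and $(1^{\ul2}\,p^{\ul2})$ are oriented $R$-Poincar\'e fibrations with constant dualising modules, and running the final chain of equivalences of that proof together with $(1^{\ul2}\,p^{\ul2})(1^{\ul2}\,\Delta)\simeq(1^{\ul2}\,p)$, $(1^{\ul2}\,p)\bar\Delta\simeq\Delta$ and $(p\,1)_*\Delta_!\simeq(-\otimes\bar s)$ identifies $(p\,1)_*(1^{\ul2}\,p^{\ul2})_!(1^{\ul2}\,\Delta)_*\bar\Delta_!$ with $\Id_{\Mod_R^M}$. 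Thus the transformation in question is a natural self-map of $\Id_{\Mod_R^M}$, and it remains to show it is an equivalence.

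For this I would carry out a diagram chase. Transporting the composite of the first display through the two identifications just made, and replacing $(1^{\ul2}\,p^{\ul2})_!$ by $(1^{\ul2}\,p^{\ul2})_*$ using the orientation, the leg $\eta_{(1^{\ul2}\Delta)_*}$ becomes the unit-type comparison $(1^{\ul2}\,p^{\ul2})_*\Rightarrow(1^{\ul2}\,p^{\ul2})_*(1^{\ul2}\,\Delta)_*(1^{\ul2}\,\Delta)^*=(1^{\ul2}\,p)_*(1^{\ul2}\,\Delta)^*$, the invertible middle map becomes the base-change equivalence of the square, and the leg $\epsilon_{\Delta_*}$ becomes the counit of the adjunction witnessing $(p\,1)\Delta\simeq 1$; applying the triangle identities for $\big((1^{\ul2}\,\Delta)^*,(1^{\ul2}\,\Delta)_*\big)$ and then for $\big(\Delta^*,\Delta_*\big)$ collapses the composite to the identity. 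I expect the only real difficulty to be exactly this coherence bookkeeping: the orientation trivialisations and the many base-change $2$-cells must be threaded through together, and one must verify that the surviving endomorphism of $\Id_{\Mod_R^M}$ is genuinely the identity rather than merely \emph{some} natural self-map. If one prefers to avoid a full $2$-categorical chase, one may instead note that a natural self-map of $\Id_{\Mod_R^M}$ is (for $M$ connected) multiplication by an element of $\pi_0R$, and pin that element down to $1$ by evaluating the whole construction at a single point of $M$, where it base-changes to a diagram built from cartesian squares of based loop spaces and the claim follows from the projection formula and the $R$-orientation.
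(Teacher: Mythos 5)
Your identification of the Beck--Chevalley transformation, its decomposition into unit, invertible base-change and counit, and the observation that both source and target of the outer composite $(p\,1)_*(1^{\ul2}\,p^{\ul2})_!(\text{BC})$ trivialise to $\Id_{\Mod_R^M}$ are all correct. But your overall route is genuinely different from the paper's, and the place where yours is not yet a proof is exactly the place where the paper's route goes around the difficulty rather than through it.

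The paper never trivialises the endpoints and never needs to identify the value of the map. Instead it decomposes the Beck--Chevalley transformation \emph{itself}: first it uses the orientation to replace $(p\,1)_*$ by $(p\,1)_!\otimes\bar s$ and $(1^{\ul2}\,p^{\ul2})_!$ by $(1^{\ul2}\,p^{\ul2})_*\otimes s^{\otimes\ul2}$ via a commutative square, so the ``wrong-variance'' functors disappear and the problem becomes to show a BC transformation sandwiched between $(p\,1)_!$ and $(1^{\ul2}\,p^{\ul2})_*$ is invertible; it then uses symmetric monoidality to interleave those projections, and after factoring the cartesian square through $M^{\ul3}$ the remaining transformation is a composite of a BC for a cartesian square whose parallel arrows are equivalences (hence automatic) and a BC that is a K\"unneth-type identification coming from monoidality. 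At no point does it need to compute what the transformation \emph{is}, only that each piece is an equivalence. Your plan, by contrast, needs to know the residual self-map of $\Id_{\Mod_R^M}$ is the identity (or at least invertible), and that is where the proposal has two concrete gaps.

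First, the ``triangle identities'' argument is not justified. The unit $\eta_{(1^{\ul2}\,\Delta)_*}$ and counit $\epsilon_{\Delta_*}$ belong to the adjunctions $(1^{\ul2}\,\Delta)^*\dashv(1^{\ul2}\,\Delta)_*$ and $\Delta^*\dashv\Delta_*$, but the outer composite you apply on the left is $(p\,1)_*(1^{\ul2}\,p^{\ul2})_!$. After trading $(1^{\ul2}\,p^{\ul2})_!$ for $(1^{\ul2}\,p^{\ul2})_*\otimes s^{\otimes\ul2}$ by the orientation, the unit leg becomes $(1^{\ul2}\,p^{\ul2})_*\eta_{(1^{\ul2}\,\Delta)_*}\colon(1^{\ul2}\,p^{\ul2})_*\Rightarrow(1^{\ul2}\,p)_*(1^{\ul2}\,\Delta)^*$, which is \emph{not} the unit or counit of any adjunction between $(1^{\ul2}\,p^{\ul2})_*$ and $(1^{\ul2}\,\Delta)^*$ — these two functors are not adjoint to one another. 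There is therefore no triangle identity that cancels this leg against the outer functor, and the same remark applies to the counit leg. What you actually have is a self-map of $\Id_{\Mod_R^M}$ built out of several units, counits, BC equivalences and orientation trivialisations, and showing it equals the identity requires a real $2$-categorical coherence computation, precisely what the paper's route avoids by never producing a self-map of $\Id$ in the first place.

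Second, the fallback argument relies on a false claim. For $M$ connected, the space of natural self-maps of $\Id_{\Mod_R^M}$ in $\SPrL_R$ is \emph{not} in general $R$: it is the $R$-linear Hochschild cohomology of $\Mod_R^M$, which one computes as $\Hom_{\Mod_R^M}\bigl(p(M)^*R,\,\Delta^*\Delta_!p(M)^*R\bigr)$; the fibre of $\Delta^*\Delta_!p(M)^*R$ at $m\in M$ is $\Sigma^\infty_+\Omega_m M\otimes R$. Already for $M=S^1$ (a perfectly good oriented $R$-Poincar\'e duality space) one has $\Mod_R^{S^1}\simeq\Mod_{R[\Z]}$ and $\pi_0\Hom(\Id,\Id)\cong(\pi_0R)[\Z]$, not $\pi_0R$. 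So ``an element of $\pi_0R$'' is wrong, and ``pin it down by evaluating at a point'' does not work: evaluating the transformation by conjugating with $m_!$ and $m^*$ produces an endomorphism of $m^*m_!R\simeq R[\Omega_m M]$, not of $R$, and the construction does not base-change to a point (the loop spaces do not disappear). Since the lemma must hold for aspherical PD spaces (where $\pi_1M$ is large), this gap is not a matter of missing a simply-connected hypothesis — the proof strategy itself fails.

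In short: the endpoints do trivialise, but establishing that the resulting self-map is invertible is the actual content of the lemma, and neither of your two arguments for that step is correct as stated. The paper's strategy — convert all $(-)_*$'s and $(-)_!$'s in the way that removes the wrong-variance occurrences, then split the BC along the factorisation of the cartesian square and along the symmetric monoidal structure — is a different and more robust approach that you should compare with yours.
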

\begin{proof}
Use Notation \ref{nota:s_equals_Rd}.
The given Beck--Chevalley transformation is the left vertical map in the following commutative square, whose horizontal maps are obtained by combining the Poincar\'e duality equivalences for $(1^{\ul2}\,p^{\ul2})$ and $(p\,1)$:
\[
\begin{tikzcd}[column sep=90pt]
(p\,1)_*(1^{\ul2}\,p^{\ul2})_!\delta_!\Delta_*\ar[d,"BC"]\ar[r,"PD\text{ on }(1^{\ul2}\,p^{\ul2})\text{ and }(p\,1)","\simeq"']& (p\,1)_!(1^{\ul2}\,p^{\ul2})_*\delta_!\Delta_*\otimes s^{\otimes \ul2}\otimes \bar s\ar[d,"BC"]\\
(p\,1)_*(1^{\ul2}\,p^{\ul2})_!(1^{\ul2}\,\Delta)_*\bar\Delta_!\ar[r,"PD\text{ on }(1^{\ul2}\,p^{\ul2})\text{ and }(p\,1)","\simeq"']& (p\,1)_!(1^{\ul2}\,p^{\ul2})_*(1^{\ul2}\,\Delta)_*\bar\Delta_!\otimes s^{\otimes \ul2}\otimes \bar s
\end{tikzcd}
\]
It then suffices to prove that the right vertical map is invertible. After removing the factors ``$s^{\otimes \ul2}\otimes \bar s$'', the right vertical map fits as left vertical map in the following commutative diagram:
\[
\begin{tikzcd}
(p\,1)_!(1^{\ul2}\,p^{\ul2})_*\delta_!\Delta_*\ar[d,"BC"]\ar[r,"BC/\text{s.m.}","\simeq"']&(1\,p^{\ul2})_*(p\,1^{\ul3})_!\delta_!\Delta_*\ar[d,"BC"]\ar[r,"\simeq"'] &(1\,p)_*(1\,p\,1)_*\partial_!\Delta_*\ar[d,"BC"]\\
(p\,1)_!(1^{\ul2}\,p^{\ul2})_*(1^{\ul2}\,\Delta)_*\bar\Delta_!\ar[r,"BC/\text{s.m.}","\simeq"']& (1\,p^{\ul2})_*(1\,\Delta)_*(p\,1^{\ul2})_!\bar\Delta_!\ar[r,"\simeq"']&(1\,p)_*(1\,p\,1)_*(1\,\Delta)_*\Delta_!
\end{tikzcd}
\]
It suffices again to prove that the right vertical map is an equivalence. This is the Beck--Chevalley transformation corresponding to the left cartesian square in the following diagram; we can combine it with the Beck--Chevalley transformations corresponding to the right and the total cartesian squares, obtaining the commutative triangle on the right:
\[
\begin{tikzcd}
M\ar[r,"\Delta"]\ar[d,"\Delta"]\ar[dr,"\lrcorner",phantom, very near start] & M^{\ul2}\ar[d,"\partial"]\ar[r,"(p\,1)"]\ar[dr,"\lrcorner",phantom, very near start] & M\ar[d,"\Delta"]\\
M^{\ul2}\ar[r,"(1\,\Delta)"] &M^{\ul3}\ar[r,"(1\,p\,1)"] &M^{\ul2}
\end{tikzcd}
\hspace{1cm}
\begin{tikzcd}
(1\,p)_*(1\,p\,1)_*\partial_!\Delta_*\ar[d,"BC"]&(1\,p)_*\Delta_!(p\,1)_*\Delta_*\ar[dl,"BC"]\ar[l,"BC"]\\
(1\,p)_*(1\,p\,1)_*(1\,\Delta)_*\Delta_!
\end{tikzcd}
\]
We conclude by observing that the diagonal and horizontal arrows in the above triangle are equivalences. For the diagonal arrow, we observe that the total cartesian square has horizontal arrows being equivalences, hence also the corresponding Beck--Chevalley transformation is an equivalence. For the horizontal arrow we appeal to symmetric monoidality: we can identify $(1\,p\,1)_*\partial_!\simeq (p\,1^{\ul2})_*(\tau\,1)_*(\tau\,1)_!(1\,\Delta)_!\simeq (p\,1^{\ul2})_*(1\,\Delta)_!$, where we use that $(\tau\,1)_*$ and $(\tau\,1)_!$ are equivalent to each other (the adjunctions with $(\tau\,1)^*$ exhibit each of them as an inverse of $(\tau\,1)^*$) and that $\tau$ is an involution. Up to this replacement, the above Beck--Chevalley equivalence is just the identification of both $(p\,1^{\ul2})_*(1\,\Delta)_!$ and $\Delta_!(p\,1)_*$ with $p_*\otimes\Delta_!$.
\end{proof}

\subsection{A unital enhancement of \texorpdfstring{$\Psi_R(M)$}{PsiR(M)}}
\label{subsec:unitalenhancement}
In this subsection we enhance $\Psi_R(M)$ to a symmetric monoidal functor $\hPsi_R(M)\colon\Gr\to((\SPrL_R)_{\Mod_R/}^\llax)^\twosimeq$; see Notation \ref{nota:laxcomma} for the lax comma category. Recall in particular that an object in $((\SPrL_R)_{\Mod_R/}^\llax)^\twosimeq$ is an $R$-linear functor between $R$-linear categories $F\colon\Mod_R\to\cC$, and a 1-morphism between $F\colon\Mod_R\to\cC$ and $F'\colon\Mod_R\to\cC'$ is the datum of a functor $G$ and a natural transformation fitting in a square as follows:
\[
 \begin{tikzcd}[row sep=10pt]
  \Mod_R\ar[d,"F"']\ar[r,equal]&\Mod_R\ar[d,"F'"]\ar[dl,Rightarrow]\\
  \cC\ar[r,"G"'] &\cC'.
 \end{tikzcd}
\]
The symmetric monoidal structure on $((\SPrL_R)_{\Mod_R/}^\llax)^\twosimeq\subseteq(\Fun([1],\SPrL_R)^\llax)^\twosimeq$ is given by pointwise tensor product in $\SPrL_R$.
We have a symmetric monoidal target functor $d_0\colon((\SPrL_R)_{\Mod_R/}^\llax)^\twosimeq\to(\SPrL_R)^\twosimeq$, and $\hPsi_R(M)$ will be a ``unital enhancement'' of $\Psi_R(M)$ in that we will have a symmetric monoidal equivalence $d_0\circ \hPsi_R(M)\simeq \Psi_R(M)$.

\begin{defn}
\label{defn:hPsiRM}
We define $\hPsi_R(M)$ as the symmetric monoidal $\inftwo$-functor corresponding, by virtue of Corollary \ref{cor:A1}, to the commutative Frobenius algebra $(\fI_{\hPsi},\ft_{\hPsi})$ given by the following data:

\begin{itemize}
 \item We let $\fI_\hPsi\colon\Fin\to((\SPrL_R)_{\Mod_R/}^\llax)^\twosimeq$ be the symmetric monoidal functor classifying the algebra $(p^*\colon\Mod_R\to\Mod_R^M)\in\CAlg((\SPrL_R)_{\Mod_R/}^\llax)$. Here we use that $(p^*\colon\Mod_R\to\Mod_R^M)$ is a commutative algebra object in $((\SPrL_R)_{\Mod_R/})^\twosimeq$, and that the canonical functor $  ((\SPrL_R)_{\Mod_R/})^\twosimeq\to((\SPrL_R)_{\Mod_R/}^\llax)^\twosimeq$ is symmetric monoidal.
\item We let $\ft_\hPsi$ be the following rectangle
\[
 \begin{tikzcd}[row sep=12pt]
  \Mod_R\ar[r,equal]\ar[d,"p^*"']&\Mod_R\ar[d,equal]\ar[dl,Rightarrow,"\eta_{p_*}"]\\
  \Mod_R^M\ar[r,"p_*"']&\Mod_R.
 \end{tikzcd}
\]
\end{itemize}
\end{defn}
The rest of the subsection is devoted to the proof that the data provided in Definition \ref{defn:hPsiRM} indeed yield a commutative Frobenius algebra. Note that, if this is the case, the application of $d_0$ obviously yields the data $(\fI_{\Psi},\ft_{\Psi})$ of the commutative Frobenius algebra from Definition \ref{defn:PsiRM}.

We shall need the following $C_2$-equivariant 1-morphism $\fe_\hPsi\colon\fI_{\hPsi}(\ul0)\to\fI_{\hPsi}(\ul2)$, given by the horizontal composition of the following $C_2$-equivariant diagram of categories, functors and natural transformations:
\[
\begin{tikzcd}[column sep=80pt,row sep=30pt]
\Mod_R\ar[d,equal]\ar[r,equal]&\Mod_R\ar[d,"\delta_!\Delta_* 
\Delta^*(p^{\ul2})^*",very near 
start]\ar[dl,Rightarrow,"BC","p^{\ul2}\Delta\simeq 
p"']\ar[r,equal]&\Mod_R\ar[d,"(p^{\ul2})^*" , near 
start]\ar[dl,Rightarrow,"1^{\ul2}\simeq(1^{\ul2}\, 
p^{\ul2})\delta","\eta_{\Delta_*}"'near end]\\
\Mod_R\ar[r,"(1^{\ul2}\,\Delta)_*\bar\Delta_! 
p^*"']&\Mod_R^{M^{\ul2\sqcup\ul2}}\ar[r,"(1^{\ul2}\, 
p^{\ul2})_!"']&\Mod_R^{M^{\ul2}}.
\end{tikzcd}
\]

We remark that not all functors appearing in the previous diagram are necessarily colimit preserving and strictly $R$-linear; nevertheless the functors $\Id_{\Mod_R^M}$, $(p^{\ul2})^*$ and $\fe_\Psi=(1^{\ul2}\, 
p^{\ul2})_!(1^{\ul2}\,\Delta)_*\bar\Delta_! 
p^*$ have these properties (the latter by Lemma \ref{lem:feisleftadjoint}), and these are the only functors occurring in the rectangle giving $\fe_\hPsi$, which therefore is a well-defined 1-morphism in $((\SPrL_R)_{\Mod_R/}^\llax)^\twosimeq\subseteq(\Fun([1],\SPrL_R)^\llax)^\twosimeq$.

By virtue of Remark \ref{rem:bothhomotopies}, it suffices to construct a homotopy
$\beta_\hPsi$ connencting the 1-morphisms $((\ft_\hPsi\circ \fI_\hPsi(\mu))\otimes\Id_{\fI_\hPsi(\ul 1)})\circ(\Id_{\fI_\hPsi(\ul 1)}\otimes \fe_\hPsi)$ and $\Id_{\fI_\hPsi(\ul 1)}$. Concretely, we need to provide a filling of 
the following prism, in which all vertices, edges and vertical faces are already 
filled via the data provided so far; in particular the back vertical face is 
filled with the identity of $p^*$.
Moreover, the requirement on $d_1\hPsi_R(M)$ and $d_0\hPsi_R(M)$ forces us to 
fill the the top face with the identity of $\Id_{\Mod_R}$, and the bottom face 
with $\beta_\Psi$.

\[
\begin{tikzcd}[column sep=65pt,row sep=30pt]
\Mod_R\ar[dd,"p^*"',near start]\ar[dr,"="]\ar[rrr,"=",""{name=A,inner 
sep=2pt,below, pos=.6},""{name=E,inner 
sep=2pt,below, pos=.7}]& & &\Mod_R\ar[dd,"p^*",near 
start]\ar[dddl,Rightarrow,"\eta_{(p\,1)_*}","p^{\ul3}(\Delta1)\simeq 
p^{\ul2}"'pos=.3,"\simeq p(p1)"'pos=.35] \\
&\Mod_R\ar[dd,"(1\delta)_!(1\Delta)_*(1\Delta)^* (p^{\ul3})^*", pos=.1]\ar[dl,Rightarrow,"BC"'near 
end,"p^{\ul3}(1\Delta)\simeq 
p^{\ul2}"'pos=.2,"\simeq p(1p)"'pos=.4]\ar[r,"="pos=.3,""{name=B,inner 
sep=2pt,above}]&\Mod_R\ar[dd,"(p^{\ul3})^*",near 
start]\ar[ddl,Rightarrow,"\eta_{(1\,\Delta)_*}"pos=.3,"(1^{\ul3}p^{\ul2} 
)(1g)\simeq 1^{\ul3}"'pos=.4]\ar[ur,"="']\\
\Mod_R^M\ar[dr,"(1^{\ul3}\,\Delta)_*(1\,\bar\Delta)_! 
(1\,p)^*"',very near end]\ar[rrr,"="pos=.2,dashed,""{name=C,inner 
sep=2pt,below, pos=.6},""{name=F,inner 
sep=2pt,above, pos=.4}]& & &
\Mod_R^M\\
&\Mod_R^{M^{\ul5}}\ar[r,"(1^{\ul3}\, p^{\ul2})_!"',""{name=D,inner 
sep=2pt,above}]&\Mod_R^{M^{\ul3}}.\ar[ur,"(p\,1)_*(\Delta\,1)^*"']
\ar[from=B, to=A, equal]
\ar[from=D, to=C, dotted,Rightarrow, "\beta_\Psi"',"\simeq"near start]
\ar[from=E, to=F, dotted,equal]
\end{tikzcd}
\]
So all that is left to do, in order to complete the definition of $\beta_\hPsi$, is 
to fill the interior of the prism, which is equivalent to providing a 
filling of the following diagram.
\begin{lem}
\label{lem:fill3cell}
The identity of $p^*$ is homotopic to the cyclic composition in the following diagram in $\Fun(\Mod_R,\Mod_R^M)$, starting on the top left corner; moreover the right vertical arrow is invertible:
\[
\begin{tikzcd}[column sep=70pt]
p^*
\ar[d,"\eta_{(p\,1)_*}p^*","p^{\ul3}(\Delta\,1)\simeq p(p\,1)"'] & 
(p\,1)_*(\Delta\,1)^*(1^{\ul3}\,p^{\ul2})_!(1^{\ul3}\,\Delta)_*(1\,
\bar\Delta)_!(1\,p)^*p^*\ar[l,"\beta_\Psi p^*"',"\simeq"]\\
(p\,1)_*(\Delta\,1)^*(p^{\ul3})^*\ar[r,"(p1)_*\!(\Delta1)^*\eta_{(1\Delta)_*}\!(p^{\ul3})^*","(1^{\ul3}\,p^{
\ul2 })(1\,\delta)\simeq 1^{\ul3}"']
&(p\,1)_*(\Delta\,1)^*(1^{\ul3}\,p^{\ul2})_!(1\,\delta)_!(1\,\Delta)_*(1\,
\Delta)^*(p^{\ul3})^*.\ar[u,"\simeq?","BC\text{, }p^{\ul3}(1\,\Delta)\simeq p(1\,p)"']
 \end{tikzcd}
\]
\end{lem}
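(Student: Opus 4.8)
The plan is to treat the statement as a pair of $2$-categorical diagram chases, reducing both assertions to facts already available: the invertibility statement of Lemma~\ref{lem:BCinvertible} and the Poincar\'e-duality rewritings of Lemma~\ref{lem:feisleftadjoint}. No new geometric input is needed beyond identifying a few cartesian squares as images under $M^{(-)}$ of pushouts of finite sets.

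\textbf{Invertibility of the right vertical arrow.} First I would pin down this $2$-cell. After discarding the constant input --- note $(1\,\Delta)^*(p^{\ul3})^*\simeq(1\,p)^*p^*\simeq(p^{\ul2})^*$, since all three are pullback along the unique map $M^{\ul2}\to*$ --- and the common prefix $(p\,1)_*(\Delta\,1)^*(1^{\ul3}\,p^{\ul2})_!$, the arrow is the (not \emph{a priori} invertible) Beck--Chevalley transformation $(1\,\delta)_!(1\,\Delta)_*\to(1^{\ul3}\,\Delta)_*(1\,\bar\Delta)_!$ of the kind described in Subsection~\ref{subsec:BCprojformulae}, attached to the cartesian square with corners $M^{\ul2},M^{\ul4},M^{\ul3},M^{\ul5}$ and legs $1\,\bar\Delta$, $1\,\Delta$, $1\,\delta$, $1^{\ul3}\,\Delta$. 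This square is $M\times(-)$ applied to the cartesian square displayed just before Definition~\ref{defn:PsiRM} (the image under $M^{(-)}$ of the pushout of finite sets with corners $\ul1,\ul2,\ul2\sqcup\ul1,\ul2\sqcup\ul2$). All maps occurring here, as well as $1^{\ul3}\,p^{\ul2}$, have the form $1_M\times(-)$, so I would use the external-tensor factorisations $(1^{\ul3}\,p^{\ul2})_!\simeq\Id_{\Mod_R^M}\otimes(1^{\ul2}\,p^{\ul2})_!$ (and similarly for $(1\,\delta)_!$, $(1\,\Delta)_*$) together with a Beck--Chevalley equivalence commuting $(\Delta\,1)^*$ past $(1^{\ul3}\,p^{\ul2})_!$ to rewrite the composite in a form to which Lemma~\ref{lem:BCinvertible} applies directly; should this identification not be entirely formal, one can instead simply repeat the Poincar\'e-duality argument of Lemma~\ref{lem:BCinvertible} one dimension up.

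\textbf{The cyclic composition equals the identity.} Here I would expand $\beta_\Psi p^*$ via Definition~\ref{defn:betaPsi}, i.e.\ write it as $(\beta_{\Psi,2}p^*)\circ(\beta_{\Psi,1}p^*)$ and, inside $\beta_{\Psi,2}$, unfold its first (upward) leg as the inverse of the Lemma~\ref{lem:BCinvertible} equivalence. The cyclic composition then becomes a closed loop of $2$-cells built exclusively from: units and counits of the adjunctions $f_!\dashv f^*\dashv f_*$; Beck--Chevalley equivalences for cartesian squares that are images under $M^{(-)}$ of pushouts of finite sets; Beck--Chevalley equivalences along product projections; the symmetric-monoidality identifications of functors with external tensor products; and the elementary homotopies $p\Delta\simeq1$, $(p\,1)\Delta\simeq1$, $(1\,p)\Delta\simeq1$, $(1^{\ul2}\,p^{\ul2})\delta\simeq1^{\ul2}$, $(1^{\ul3}\,p^{\ul2})(1\,\delta)\simeq1^{\ul3}$ and $p^{\ul3}(1\,\Delta)\simeq p(1\,p)$ among maps of finite sets. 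I would then chase: the two unit $2$-cells $\eta_{(p\,1)_*}p^*$ and $(p\,1)_*(\Delta\,1)^*\eta_{(1\,\Delta)_*}(p^{\ul3})^*$ are absorbed, via the triangle identities, into the counits internal to the right vertical Beck--Chevalley transformation and to the expansion of $\beta_\Psi$; once those cancellations are performed the surviving Beck--Chevalley equivalences and geometric homotopies pair off, leaving $\Id_{p^*}$. Matching the loop against the boundary of the prism displayed in the surrounding text --- all of whose other faces are already filled --- is what tells one which $2$-cell cancels which.

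\textbf{Expected obstacle.} The conceptual input is light: identifying the cartesian squares and verifying which legs are equivalences essentially duplicates Lemmas~\ref{lem:BCinvertible} and~\ref{lem:feisleftadjoint}. The genuine difficulty is bookkeeping --- the loop is large, being the unfolding of a $3$-cell in a $2$-category, and one must keep precise track of which counit cancels which unit and of the orientation of every Beck--Chevalley and every geometric $2$-cell. I expect this organisational step, rather than any individual calculation, to be where essentially all of the work lies.
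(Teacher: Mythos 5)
Your overall plan coincides with the paper's: for the invertibility, commute $(\Delta\,1)^*$ rightward (equivalently use the external-tensor decomposition of $(\SPrL_R)^\twosimeq$-functors) so that the right vertical arrow becomes the horizontal composite of the Beck--Chevalley transformation of Lemma~\ref{lem:BCinvertible} with $p^*$; your identification of the cartesian square with corners $M^{\ul2},M^{\ul3},M^{\ul4},M^{\ul5}$ as $M\times(-)$ applied to the one displayed before Definition~\ref{defn:PsiRM} is correct. This half is fine.

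For the cyclic composition, however, the step that you flag as "bookkeeping" actually conceals a genuine idea that your sketch does not supply, and the mechanism you propose in its place is not correct as stated. You say the two units $\eta_{(p\,1)_*}p^*$ and $(p\,1)_*(\Delta\,1)^*\eta_{(1\,\Delta)_*}(p^{\ul3})^*$ are "absorbed, via the triangle identities, into the counits internal to" the right vertical Beck--Chevalley transformation and to $\beta_\Psi$. But these two units are units for \emph{different} adjunctions (one for $(p\,1)^*\dashv(p\,1)_*$, the other for $(1\,\Delta)^*\dashv(1\,\Delta)_*$), applied at different positions in the word, with several Beck--Chevalley equivalences interposed before any compatible counit appears; a triangle identity relates a unit and counit of the \emph{same} adjunction applied at adjacent positions, and cannot be invoked here without first executing precisely the reorganisation you are deferring. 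What actually makes the chase close up is different: after applying $\beta_{\Psi,1}p^*$ to rewrite the hollow square (moving $(\Delta\,1)^*$ to the right), the left and bottom arrows become $\eta_{(p\,1)_*}p^*$ followed by $(p\,1)_*\eta_{\Delta_*}(p\,1)^*p^*$, and these \emph{compose} --- by compositionality of units of adjunctions, not by a triangle identity --- to $\eta_{((p\,1)\Delta)_*}p^*$, which is the equivalence $\Id_{\Mod_R}\simeq (p\,1)_*\Delta_*\Delta^*(p\,1)^*$ coming from $(p\,1)\Delta\simeq 1$. That equivalence is exactly the first leg of $\beta_{\Psi,2}^{-1}$, and the right vertical arrow is exactly the remaining Beck--Chevalley leg of $\beta_{\Psi,2}^{-1}$; thus the left-bottom-right composite of the rewritten square equals $\beta_{\Psi,2}^{-1}p^*$, while $\beta_{\Psi,1}p^*$ is literally the rewriting itself, giving $\beta_\Psi p^*\circ(\text{right})\circ(\text{bottom})\circ(\text{left})\simeq\Id_{p^*}$. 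You should also note that your sketch never says where $\beta_{\Psi,1}$ goes in the cancellation; that piece is accounted for exactly by the preliminary rewriting. Without the compositionality-of-units observation and the explicit matching against $\beta_{\Psi,2}^{-1}$, the unfolded loop of $2$-cells does not visibly cancel, so what you have is an incomplete plan rather than a proof of the identity.
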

\begin{proof}
We apply the following procedure to replace the given hollow square by a more convenient one: by means of compatible Beck--Chevalley equivalences, we bring the term ``$(\Delta\,1)^*$'' to the right in all the three corners of the above hollow
square in which it appears; we then suitably recombine all terms 
``$(-)^*$'' that have accumulated at the right end of each corner. The result is the 
following hollow square, which needs to be suitably filled:
\[
\begin{tikzcd}[column sep=60pt]
p^*\ar[d,"\eta_{(p\,1)_*}p^*"] & 
(p\,1)_*(1^{\ul2}\,p^{\ul2})_!(1^{\ul2}\,\Delta)_*
\bar\Delta_!p^*\ar[l,"\beta_{\Psi,1}\,p^*"',"\simeq"]\\
(p\,1)_*(p\,1)^*p^*\ar[r,"(p1)_*\eta_{\Delta_*}(p1)^*p^*","(1^{\ul2}\,p^{
\ul2 })\delta\simeq 1^{\ul2}"']
&(p\,1)_*(1^{\ul2}\,p^{\ul2})_!\delta_!\Delta_*
\Delta^*(p\,1)^*p^*.\ar[u,"\simeq?","BC\text{, }(p\,1)\Delta\simeq 1"']
 \end{tikzcd}
\]
Note that the right vertical map of the last square is the horizontal composition of the Beck--Chevalley transformation from Lemma \ref{lem:BCinvertible} with $p^*$, hence it is invertible.

We next observe that the composition of $\eta_{(p\,1)_*}$ and 
$(p\,1)_*\eta_{\Delta_*}(p\,1)^*$ is homotopic to $\eta_{((p\,1)\Delta)_*}$, 
which is homotopic to the composite equivalence $\Id_{\Mod_R}\simeq (p\,1)_*\Delta_*\simeq (p\,1)_*\Delta_*\Delta^*(p\,1)^*$, as $(p\,1)\Delta\simeq 1$. This implies that the 
left-bottom-right composite in the above square is homotopic to the bottom 
composite in the following commutative diagram, whose top composite is by definition the inverse of
$\beta_{\Psi,2}\,p^*$:
\[
\begin{tikzcd}[column sep=6pt]
\! p^*\ar[r,"\simeq"',"(p1)\Delta\simeq 
1^{\ul2}"] &(p1)_*\Delta_*p^*\!\ar[d,"\simeq"',"(p1)\Delta\simeq 
1^{\ul2}"]\ar[r,"(1^{\ul2}p^{\ul2} 
)\delta\simeq 1^{\ul2}"]&\!(p1)_*(1^{\ul2}p^{\ul2})_! 
\delta_!\Delta_*p^*\!\!\ar[d,"\simeq"',"(p1)\Delta\simeq 
1^{\ul2}"]\ar[r,"BC","\simeq"']& \!\!(p1)_*(1^{\ul2}p^{\ul2})_! 
(1^{\ul2}\Delta)_*\bar\Delta_!p^* \\
&(p1)_*\Delta_*\Delta^*(p1)^*p^*\ar[r] &(p1)_*(1^{\ul2}p^{\ul2}
)_!\delta_!\Delta_*\Delta^*(p1)^*p^*.
\end{tikzcd}
\]
\end{proof}
\begin{conj}
In order to prove that the data $(\fI_\hPsi,\ft_\hPsi)$ from Definition \ref{defn:hPsiRM} yields a commutative Frobenius algebra in $((\SPrL_R)^\llax_{\Mod_R/})^\twosimeq$, we have only used that $M$ is an $R$-Poincar\'e duality space, whereas the datum of an $R$-orientation was only useful to simplify some formulas.

We conjecture that the converse holds true: if $M$ is a compact space such that the analogous data $(\fI_\hPsi,\ft_\hPsi)$ as in Definition \ref{defn:hPsiRM} is a commutative Frobenius algebra in $((\SPrL_R)^\llax_{\Mod_R/})^\twosimeq$, then $M$ is an $R$-Poincar\'e duality space.
\end{conj}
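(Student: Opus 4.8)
The plan is to reduce everything to the invertibility of the dualising module. Applying the symmetric monoidal functor $d_0\colon((\SPrL_R)_{\Mod_R/}^\llax)^\twosimeq\to(\SPrL_R)^\twosimeq$ --- which carries commutative Frobenius algebras to commutative Frobenius algebras --- and using that $d_0(\fI_\hPsi,\ft_\hPsi)=(\fI_\Psi,\ft_\Psi)$, it suffices to show: if $M$ is compact and $(\fI_\Psi,\ft_\Psi)$ of Definition \ref{defn:PsiRM} is a commutative Frobenius algebra in $(\SPrL_R)^\twosimeq$, then $M$ is an $R$-Poincar\'e duality space. Since $M$ is compact, $p(M)_*$ is already colimit-preserving and $R$-linear (Subsection \ref{subsec:PDspaces}), so Morita theory furnishes $\omega_M^R\in\Mod_R^M$ with an $R$-linear equivalence $p(M)_*\simeq p(M)_!(-\otimes_R\omega_M^R)$; by Definition \ref{defn:RPoincare} it is then enough to prove that $\omega_M^R$ is an invertible object of $\Mod_R^M$, equivalently that its fibres lie in $\Pic(R)$. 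So the goal becomes to extract invertibility of $\omega_M^R$ from the bare existence of a Frobenius structure.

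The key input is that, for $M$ compact, $\Mod_R^M$ is a self-dual object of $\SPrL_R$ --- the $R$-linear, parametrised form of Atiyah duality; see \cite{Kleindual, BHKK} --- with canonical evaluation $c_{\mathrm{can}}:=p(M)_!\circ\Delta_M^*\colon\Mod_R^M\otimes_{\Mod_R}\Mod_R^M\simeq\Mod_R^{M\times M}\to\Mod_R$, where $\Delta_M\colon M\to M\times M$ is the diagonal. Unwinding Definition \ref{defn:Frobeniusalgebra}, the pairing packaged into $(\fI_\Psi,\ft_\Psi)$ is $c:=\ft_\Psi\circ\fI_\Psi(\mu)\simeq p(M)_*\circ\Delta_M^*$. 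Using the projection formula, that $\Delta_M^*$ is symmetric monoidal, and that $(p(M)\times\Id_M)\circ\Delta_M\simeq\Id_M$, the identity $p(M)_*\simeq p(M)_!(-\otimes_R\omega_M^R)$ rewrites $c$ as
\[
c\simeq c_{\mathrm{can}}\circ\big(\Id_{\Mod_R^M}\otimes(-\otimes_R\omega_M^R)\big).
\]
Consequently, under the self-duality identification $(\Mod_R^M)^\vee\simeq\Mod_R^M$ supplied by $c_{\mathrm{can}}$, the map $\Mod_R^M\to(\Mod_R^M)^\vee$ adjoint to $c$ is an equivalence if and only if $-\otimes_R\omega_M^R$ is an auto-equivalence of $\Mod_R^M$.

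Now the argument closes formally: a pairing on a dualisable object extends to a duality datum precisely when its adjoint map to the dual is an equivalence, so the hypothesis that $(\fI_\Psi,\ft_\Psi)$ is a Frobenius algebra --- which asks exactly that $c$ be a duality datum for $\fI_\Psi(\ul1)=\Mod_R^M$ --- forces $-\otimes_R\omega_M^R$ to be an auto-equivalence, hence $\omega_M^R\in\Pic(\Mod_R^M)$ and its fibres lie in $\Pic(R)$. When $M$ is connected its fibres are all equivalent to a single invertible $R$-module $L$; if in addition $\pi_0\Pic(R)$ is generated by $[R[1]]$ (e.g. $R$ a field, $R=\Z$, or $R=\bS$), then $L\simeq R[-d]$ and $M$ is an $R$-Poincar\'e duality space of dimension $d$. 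I emphasise that neither commutativity of $\fI_\Psi$ nor the $C_2$-equivariance of the coevaluation is used: only the bare duality datum enters, which is why the converse ought to be exactly as robust as the forward implication established in Subsection \ref{subsec:unitalenhancement}.

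The hard part will be the very last step for a general $E_\infty$-ring $R$: when $R$ carries exotic invertible modules, invertibility of $\omega_M^R$ only delivers Poincar\'e duality in the twisted sense of the remark after Definition \ref{defn:fMPoin} (that $M$ is $\Mod_R$-twisted ambidextrous with invertible dualising module), and reaching Definition \ref{defn:RPoincare} on the nose would require knowing that the dualising $R$-module of a compact space is always a shift of $R$ when it is invertible --- which I would not expect in this generality. So the clean form of the conjecture probably needs ``$R$-Poincar\'e duality space'' read in the twisted sense, or a hypothesis on $\Pic(R)$. A secondary, purely technical point is to carry out the self-duality comparison of the second paragraph rigorously inside the $\inftwo$-categorical and enriched setting --- tracking the duality data through $(\SPrL_R)^\twosimeq$ rather than in the informal notation above --- but this should be routine given \cite{BHKK}.
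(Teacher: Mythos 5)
This statement is a \emph{conjecture}: the paper offers no proof, so there is no argument of the author's to compare against. What you have written is therefore an original proof proposal, and it should be judged on its own merits.

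Your reduction and core argument are sound. Applying the symmetric monoidal functor $d_0$ to reduce from the lax-comma category to $(\SPrL_R)^\twosimeq$ is valid, since symmetric monoidal functors carry Frobenius algebras to Frobenius algebras and you only weaken the hypothesis. The decisive step --- identifying $c = p(M)_*\Delta_M^*$ with $c_{\mathrm{can}}\circ(\Id\otimes(-\otimes_R\omega_M^R))$ where $c_{\mathrm{can}}=p(M)_!\Delta_M^*$ is the evaluation of the canonical self-duality of $\Mod_R^M$, and then observing that the Frobenius requirement on $c$ is precisely that $-\otimes_R\omega_M^R$ be an auto-equivalence --- is correct and isolates the essential content. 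One small remark: the self-duality of $\Mod_R^M$ with evaluation $p_!\Delta^*$ and coevaluation $\Delta_!p^*$ holds for \emph{every} space $M$, not only compact ones; compactness is used only to make $\ft_\Psi=p_*$ land in $\SPrL_R$ and to produce $\omega_M^R$ at all, which you already have by hypothesis. (``Atiyah duality'' is not quite the right name for this dualizability of $\Fun(M,\Mod_R)$, though the underlying Morita mechanism is the same.)

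The gap you flag is genuine and is, in my view, the real content of your review: invertibility of $\omega_M^R$ only places its fibres in $\Pic(R)$, while Definition \ref{defn:RPoincare} demands that they be $R[-d]$ for a \emph{single} $d$. For $E_\infty$-rings $R$ whose $\pi_0\Pic(R)$ is not generated by $[R[1]]$ (Morava $K$- and $E$-theories, various forms of $K$-theory, etc.), these differ. There is also a simpler failure already over $R=\bS$: a disconnected compact space whose components are Poincar\'e of \emph{different} dimensions has invertible $\omega_M^R$ yet is not an $R$-Poincar\'e duality space in the sense of Definition \ref{defn:RPoincare}; nevertheless $\Mod_R^M$ is a product of Frobenius algebras and hence Frobenius. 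So the conjecture as stated is not quite right. Your argument actually proves the \emph{correct} converse --- Frobenius forces $\omega_M^R\in\Pic(\Mod_R^M)$, i.e.\ $M$ is a Poincar\'e duality space in the ``twisted'' sense alluded to in the remark after Definition \ref{defn:fMPoin} --- and the on-the-nose statement holds under an added hypothesis, e.g.\ $M$ connected and $\pi_0\Pic(R)$ generated by $[R[1]]$. I would keep that caveat front and centre rather than in a closing paragraph, since it is the thing that actually corrects the statement.
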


\subsection{The functor \texorpdfstring{$\Phi_R(M)$}{PhiR(M)}}
We conclude the section by introducing a variation $\Phi_R(M)$ of the functor $\Psi_R(M)$ from Subsection \ref{subsec:PsiRMdef}.
\begin{defn}
\label{lem:Phiviabardash}
Let $M$ be a generic space.
The symmetric monoidal $\infty$ functor $\Phi_R(M)\colon\Gr\to(\SPrL_R)^\twosimeq$ is defined as the composite symmetric monoidal functor
\[
\begin{tikzcd}
{\Gr}\ar[r,hook]&\Cospan(\cS)\ar[r,"M^{(-)}"]&\Span(\cS)\ar[r,"\Mod_R^{(-)}"]&(\SPrL_R)^\twosimeq,
\end{tikzcd}
\]
where $\Mod_R^{(-)}\colon\Span(\cS)\to(\SPrL_R)^\twosimeq$ 
is introduced in Subsection
\ref{subsec:cospansspans}.
\end{defn}

The next goal is to prove Proposition \ref{prop:PsitwistedPhi}, which provides a more precise connection between $\Psi_R(M)$ and $\Phi_R(M)$.

\begin{nota}
\label{nota:ell}
We consider the symmetric monoidal functor
\[
\ell\colon \bB\Pic(R)\to(\SPrL_R)^\twosimeq
\]
classifying the action of $\Pic(R)\subseteq\Mod_R$ on $\Mod_R$ by tensor product. Composing $\ell$ with the functor $\xi_d^R$ from Subsection \ref{subsec:picard}, we obtain a symmetric monoidal functor $\ell\xi_d^R\colon\Gr\to(\SPrL_R)^\twosimeq$.
\end{nota}
Taking pointwise tensor product in $(\SPrL_R)^\twosimeq$, we can now consider the symmetric monoidal functor $\Psi_R(M)\otimes \ell\xi_d^R\colon\Gr\to\SPrL_R$. 
\begin{prop}
\label{prop:PsitwistedPhi}
Let $M$ be an oriented, $d$-dimensional $R$-Poincar\'e duality space. Then there is an equivalence of symmetric monoidal functors
\[
 \Psi_R(M)\otimes \ell\xi_d^R\simeq\Phi_R(M)\colon\Gr\to(\SPrL_R)^\twosimeq.
\]
\end{prop}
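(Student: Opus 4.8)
The plan is to exploit the universal property of $\Gr$ from Theorem~\ref{thm:BarkanSteinebrunner}: both sides of the claimed equivalence are symmetric monoidal functors $\Gr\to(\SPrL_R)^\twosimeq$, hence correspond to commutative Frobenius algebras in $(\SPrL_R)^\twosimeq$, and it suffices to exhibit an equivalence between these Frobenius algebras. Concretely, by the second part of Theorem~\ref{thm:BarkanSteinebrunner} it is enough to produce a compatible equivalence of the underlying symmetric monoidal functors $\fI\colon\Fin\to(\SPrL_R)^\twosimeq$ and of the counits, i.e. an equivalence of the values at $\ul1$ together with an equivalence of the morphisms $\ft\circ\fI(\mu)$; since $\CFrob$ is a \emph{space}-valued functor and the datum of a Frobenius algebra is essentially determined by the functor $\fI$ and the trace $\ft\colon\fI(\ul1)\to\fI(\ul0)$, checking this amounts to a finite, low-dimensional diagram chase.

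First I would identify the two Frobenius algebras explicitly. On the $\Psi_R(M)\otimes\ell\xi_d^R$ side, the functor $\fI$ sends $\ul1$ to $\Mod_R^M$ with fibrewise tensor product, exactly as $\fI_\Psi$ (the $\ell\xi_d^R$ factor is trivial on objects, since $\xi_d^R$ sends every finite set to the basepoint of $\bB\Pic(R)$), while the trace is $p_*$ \emph{twisted by} $\xi_d^R$ evaluated on the relevant morphism $\ul1\to\ul1\sqcup\ul0$, i.e. tensored with $R[d]=s$; using the orientation we rewrite $p_*\simeq p_!(-\otimes\bar s)$, so that the twisted trace becomes $p_!(-\otimes\bar s)\otimes s\simeq p_!$. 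On the $\Phi_R(M)$ side, by Definition~\ref{lem:Phiviabardash} the functor $\fI$ again sends $\ul1$ to $\Mod_R^M$, and the trace — being the image of the cospan $\ul1\to\ul1\ot\ul0$, i.e. of the span $M\xleftarrow{p}M\xrightarrow{\mathrm{id}}{*}$ wait, more precisely of $*\xleftarrow{}M\xrightarrow{}{*}$ under $\Mod_R^{(-)}\colon\Span(\cS)\to(\SPrL_R)^\twosimeq$ — is precisely $p_!$. Thus after using the $R$-orientation the two traces literally agree; what remains is to check that the \emph{comultiplications} (equivalently, the units $u$, equivalently the maps $\fe$) match, and that all the coherence data is identified. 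Here I would invoke Remark~\ref{rem:bothhomotopies}: it is enough to produce a $C_2$-equivariant equivalence between $\fe_\Psi\otimes(\ell\xi_d^R\text{-part})$ and the corresponding map $\fe_\Phi$ for $\Phi_R(M)$, the latter being the image under $\Mod_R^{(-)}$ of the cospan $\ul0\to\ul2\ot\ul2$ — i.e. of the span $M^{\ul2}\xleftarrow{\Delta'}{?}$ — realised via the cartesian/cocartesian squares of powers of $M$ already recorded before Definition~\ref{defn:PsiRM}. These are exactly the squares (the one with $\bar\Delta$, $\delta$ cartesian; the one with $\Delta$, $p^{\ul2}$) used to build $\fe_\Psi$, and the chain of Beck--Chevalley and Poincaré-duality equivalences from Lemma~\ref{lem:feisleftadjoint} rewrites $\fe_\Psi$ as $(1^{\ul2}\,p)_!(-)\otimes s^{\otimes\ul2}\otimes\bar s$, whose twist by $\ell\xi_d^R$ (which contributes $\bar s^{\otimes 2}\otimes s$ on the relevant morphism, by Proposition~\ref{prop:xihatxi}-style bookkeeping of $\chi$) cancels to give exactly the span-functor value $\fe_\Phi$.

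The main obstacle I expect is purely \emph{bookkeeping of the degree shifts and their $C_2$-equivariance}: one must check that the powers of $s=R[d]$ produced by $\ell\xi_d^R$ on each generating morphism of $\Gr$ (multiplication, comultiplication, unit, counit) match, \emph{equivariantly}, the shifts produced by the Poincaré-duality rewritings of $p_*$, $(p^{\ul2})_*$ and $(1^{\ul2}\,p^{\ul2})_*$ inside $\Psi_R(M)$, so that after twisting everything becomes a composite of left adjoints of the shape appearing in $\Phi_R(M)=\Mod_R^{(-)}\circ M^{(-)}$. Once the unit $u$ (with its $C_2$-structure) is matched, Remark~\ref{rem:bothhomotopies} removes the need to verify the second zig-zag identity by hand, and Theorem~\ref{thm:BarkanSteinebrunner} upgrades the equivalence of Frobenius algebras to the desired equivalence of symmetric monoidal functors $\Psi_R(M)\otimes\ell\xi_d^R\simeq\Phi_R(M)$. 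A cleaner alternative, which I would try first, is to avoid choosing $u$ altogether: observe that $\Phi_R(M)$ is \emph{defined} as a composite through $\Span(\cS)$, that $M^{(-)}\colon\Cospan(\cS)\to\Span(\cS)$ restricted to $\Gr$ makes $M^{\ul1}=M$ into a Frobenius algebra in $\Span(\cS)$ (via the (co)cartesian squares of powers of $M$), and that $\Mod_R^{(-)}$ carries this to $(\fI_\Phi,\ft_\Phi)$; then the $R$-orientation gives an equivalence of this Frobenius algebra with $(\fI_\Psi,\ft_\Psi)\otimes(\text{Frobenius algebra underlying }\ell\xi_d^R)$ \emph{fibrewise over each morphism} by the Poincaré-duality identifications — and naturality in $\Gr$ is then automatic because all the identifications are instances of Beck--Chevalley and projection-formula equivalences, which are natural. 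This reduces the whole statement to the single computation at the generating morphisms, carried out exactly as in the proof of Proposition~\ref{prop:xihatxi}.
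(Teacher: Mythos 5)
Your core argument --- match the underlying symmetric monoidal functors $\fI$ (both classify $\Mod_R^M$, since $\ell\fI_{\xi_d^R}$ is constant at the unit $\Mod_R$) and match the traces $\ft_\Pxi\simeq p_*(-)\otimes_R s\simeq p_!\simeq\ft_\Phi$ via Poincar\'e duality --- is exactly the paper's proof, and it is already complete. The subsequent effort you devote to matching the comultiplication $\fe$, to $C_2$-equivariance bookkeeping, and to Remark~\ref{rem:bothhomotopies} is unnecessary: by Definition~\ref{defn:Frobeniusalgebra} a commutative Frobenius algebra \emph{is} the pair $(\fI,\ft)$ together with the \emph{property} that $\ft\circ\fI(\mu)$ is a duality counit, and the duality unit $u$, when it exists, is determined up to contractible choice. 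Once the two pairs $(\fI_\Pxi,\ft_\Pxi)\simeq(\fI_\Phi,\ft_\Phi)$ are identified, the Frobenius structures agree automatically; the comultiplication is not additional data to be matched by hand. One further small slip: you cite the ``second part'' of Theorem~\ref{thm:BarkanSteinebrunner}, which concerns symmetric monoidal \emph{spaces}, but $(\SPrL_R)^\twosimeq$ is not a space; the relevant statement is the \emph{first} part, identifying $\Fun^\otimes(\Gr,-)^\simeq$ with the moduli space $\CFrob$ of Frobenius algebra data $(\fI,\ft)$.
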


\begin{proof}
We denote in the following by 
$(\fI_{\Pxi},\ft_\Pxi)$ the underlying commutative Frobenius algebra for 
$\Psi_R(M)\otimes \ell\xi_d^R$, and compare $(\fI_{\Pxi},\ft_\Pxi)$ with $(\fI_{\Phi},\ft_\Phi)$.
We compare the underlying commutative Frobenius algebras of $\Psi_R(M)\otimes \ell\xi_d^R$ and $\Psi_R(M)$.
\begin{itemize}
\item The underlying symmetric monoidal functor $\fI_\Phi\colon\Fin\to(\SPrL_R)^\twosimeq$ is the same as $\fI_\Psi$, i.e. the symmetric monoidal functor classifying $\Mod_R^M$.  This is immediate for $\fI_\Phi$. For $\fI_\Pxi=\fI_\Psi\otimes \ell\fI_{\xi_d^R}$,
we use that the symmetric monoidal functor $\fI_{\xi_d^R}\colon\Fin\to  \bB\Pic(R)$ is constant with value $*\in  \bB\Pic(R)$, and composing with $\ell$ we obtain the constant functor $\ell\fI_{\xi_d^R}\colon\Fin\to(\SPrL_R)^\twosimeq$ at $\Mod_R$, i.e. the functor classifying the monoidal unit $\Mod_R$ as a commutative algebra. The tensor product of $\Mod_R^M$ and $\Mod_R$ is then readily identified, as a commutative algebra, with $\Mod_R^M$.
\item Use Notation \ref{nota:s_equals_Rd}. The functor $\ft_\Pxi$ is the tensor product in $(\SPrL_R)^\twosimeq$ of the functors $\ft_\Psi=p_*\colon\Mod_R^M\to\Mod_R$ and $\ell\ft_{\xi_d^R}=(-\otimes s)\colon\Mod_R\to\Mod_R$; this gives an equivalence $\ft_\Pxi\simeq p_*(-)\otimes_R s$, which by Poincar\'e duality is equivalent to $p_!\simeq \ft_\Phi$.
\end{itemize}
\end{proof}

\begin{rem}
\label{rem:reformulation}
Since the symmetric monoidal functor $\ell\xi_d^R\otimes\ell\xi_{-d}^R\colon\Gr\to\SPrL_R$ is equivalent to the constant functor at $\Mod_R$, the statement of proposition \ref{prop:PsitwistedPhi} is equivalent to a symmetric monoidal equivalence $\Psi_R(M)\simeq\Phi_R(M)\otimes\ell\xi_{-d}^R$; we will in fact employ this reformulation in Section \ref{sec:GFTfunctor}.
\end{rem}

\section{The functor \texorpdfstring{$\GFT_M$}{GFTM}}
\label{sec:GFTfunctor}
In this section we construct the symmetric monoidal $\Mod_R$-enriched functor $\GFT_M\colon(\GrCob_!\xi_d^R)^\op\to\Mod_R$ from Theorem \ref{thm:A}, associated with an $E_\infty$-ring spectrum $R$ and a $d$-dimensional oriented $R$-Poincar\'e duality space $M$.
\subsection{The functors \texorpdfstring{$Z_1,Z_2,Z_3$}{Z1, Z2, Z3}}
We start by concatenating two symmetric monoi\-dal $\infty$-functors $\Gr_\cS\to(\Fun([1],\SPrL_R)^\llax)^\twosimeq$.
The first, denoted $Z_1$, is the composite functor
\[
\begin{tikzcd}
Z_1\colon{\Gr_\cS}\ar[r,"\cG"]&{\Gr}\ar[r,"\hPsi_R(M)"]&((\SPrL_R)_{\Mod_R/}^\llax)^\twosimeq
\end{tikzcd}
\]
obtained by composing the functor $\hPsi_R(M)$ from in Subsection \ref{subsec:unitalenhancement} with the functor $\cG$ from Definition \ref{defn:GrS}. The second, denoted  $Z_2\otimes\ell\xi_{-d}^R$, is the pointwise tensor product in $(\Fun([1],\SPrL_R)^\llax)^\twosimeq$ of the following two composite functors denoted $Z_2$ and, by abuse of notation, $\ell\xi_{-d}^R$:
\[
\begin{tikzcd}
Z_2\colon{\Gr_\cS} \ar[r,"\cF"]&(\bCospan(\cS,\sqcup)^\rlax_{/\emptyset})^\twosimeq \ar[r,"M^{(-)}"]& (\bSpan(\cS,\times)^\llax_{/*})^\twosimeq\ar[dl,"\Mod_R^{(-)}"'very near end]\\
&((\SPrL_R)^\llax_{/\Mod_R})^\twosimeq\ar[r,hook]&(\Fun([1],\SPrL_R)^\llax)^\twosimeq;
\end{tikzcd}
\]
\[
\begin{tikzcd}[column sep=15pt]
\ell\xi_{-d}^R\colon{\Gr_\cS} \ar[r,"\cG"]&{\Gr} \ar[r,"\xi_{-d}^R"]&\bB\Pic(R)\ar[r,"\ell"]&\SPrL_R\ar[r,"s_0"]&(\Fun([1],\SPrL_R)^\llax)^\twosimeq.
\end{tikzcd}
\]

In the composite $Z_2$, the first arrow is the the functor $\cF$ from Definition \ref{defn:GrS}. The second arrow is obtained from the functor $M^{(-)}\colon\cS\to\cS^\op$ by passing to right lax comma $\infty$-categories of cospan $\inftwo$-categories, and by then replacing ``$\cS^\op$'' back to ``$\cS$'' at the cost of also replacing ``$\bCospan$'' by ``$\bSpan$'' and ``$\rlax$'' by ``$\llax$''. The diagonal arrow is induced from the symmetric monoidal $\inftwo$-functor $\Mod_R^{(-)}\colon\bSpan(\cS)\to\SPrL_R$ from Subsection \ref{subsec:cospansspans}.

The second functor $\ell\xi_{-d}^R$ involves the homonymous functor $\ell\xi_{-d}^R$ from Notation \ref{nota:ell}; the last arrow labeled ``$s_0$'' is induced by the degeneracy $s^0\colon[1]\to[0]$ in $\bDelta$.

We next compare the composites $d_0Z_1$ and $d_1(Z_2\otimes\ell\xi_{-d}^R)$. By the fact that $\hPsi_R(M)$ is a ``unital enhancement'' of $\Psi_R(M)$, we have
$d_0Z_1\simeq d_0\hPsi_R(M)\cG\simeq \Psi_R(M)\cG$. Combining Lemma \ref{lem:Phiviabardash}, the pullback square defining $\Gr_\cS$, and the evident way in which $d_1$ interleaves the various functors labeled ``$M^{(-)}$'' and ``$\Mod_R^{(-)}$'', we have a composite equivalence:
\[
\begin{split}
d_1(Z_2\otimes\ell\xi_{-d}^R)& \simeq (d_1Z_2)\otimes\ell\xi_{-d}^R
\simeq(d_1\circ\Mod_R^{(-)}\circ M^{(-)}\circ\cF)\otimes\ell\xi_{-d}^R\\
&\simeq (\Mod_R^{(-)}\circ M^{(-)}\circ d_1\circ\cF)\otimes\ell\xi_{-d}\\
&\simeq (\Mod_R^{(-)}\circ M^{(-)}\circ \Re\circ\cG) \otimes\ell\xi_{-d}^R\\
&\simeq(\Phi_R(M)\cG)\otimes\ell\xi_{-d}^R\simeq (\Phi_R(M)\otimes\ell\xi_{-d}^R)\cG.
\end{split}
\]
By Proposition \ref{prop:PsitwistedPhi} (see in particular Remark \ref{rem:reformulation}), we then have an equivalence $d_0Z_1\simeq d_1(Z_2\otimes\ell\xi_{-d}^R)$.

\begin{nota}
\label{nota:Z3}
We denote by $Z_3\colon\Gr_\cS\to((\SPrL_R)^\llax_{\Mod_R/})^\twosimeq$ the concatenation of the two functors $Z_1$ and $Z_2\otimes\ell\xi_{-d}^R$.
\end{nota}
\begin{lem}
\label{lem:Zthreefactors}
The $\infty$-functor $Z_3$ factors through the localisation $\Gr_\cS\to\GrCob$.
\end{lem}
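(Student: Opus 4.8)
The plan is to invoke the universal property of the localisation $\Gr_\cS\to\GrCob$: since $Z_3$ is symmetric monoidal, it factors symmetric monoidally through $\GrCob$ as soon as it sends every idle morphism to an equivalence in $((\SPrL_R)_{\Mod_R/}^\llax)^\twosimeq$. By Remark \ref{rem:idleshape} every idle morphism of $\Gr_\cS$ is equivalent to one of the displayed ``reduced'' shape, with finite-set datum a map $\phi\colon B\to A$ (nothing attached) and space datum an equivalence $q\colon Y\xrightarrow{\simeq}X$; I will therefore only treat such a morphism $\varphi$.

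Next I would separate the ``automatic'' part of $Z_3(\varphi)$ from the part that requires idleness. A $1$-morphism in $((\SPrL_R)_{\Mod_R/}^\llax)^\twosimeq$ is the datum of a functor $v$ (its image under $d_0$) together with a structure $2$-cell $\theta$, and it is an equivalence precisely when $v$ is an equivalence of $\SPrL_R$ and $\theta$ is invertible. Now $d_0Z_3=d_0(Z_2\otimes\ell\xi_{-d}^R)$, and $Z_2\otimes\ell\xi_{-d}^R$ takes values in $((\SPrL_R)^\llax_{/\Mod_R})^\twosimeq$, where by construction the $d_0$-component of every morphism is $\Id_{\Mod_R}$; hence $d_0Z_3(\varphi)=\Id_{\Mod_R}$ for \emph{every} morphism $\varphi$, and the lemma reduces to showing that the structure $2$-cell of $Z_3(\varphi)$ is invertible when $\varphi$ is idle.

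Because $Z_3$ is the concatenation of $Z_1$ and $Z_2\otimes\ell\xi_{-d}^R$ along the identification $d_0Z_1\simeq d_1(Z_2\otimes\ell\xi_{-d}^R)$ established above, the structure $2$-cell of $Z_3(\varphi)$ is the pasting of those of $Z_1(\varphi)$ and $(Z_2\otimes\ell\xi_{-d}^R)(\varphi)$, so it suffices to check that each of these two $2$-cells is invertible. For $Z_1(\varphi)=\hPsi_R(M)(\cG(\varphi))$: since $\varphi$ is in reduced form, $\cG(\varphi)$ is, up to equivalence, the image of $\phi\colon B\to A$ under the canonical functor $\Fin\to\Gr$, and the restriction of $\hPsi_R(M)$ along this functor is $\fI_{\hPsi}$, which by Definition \ref{defn:hPsiRM} factors through the \emph{strict} comma category $((\SPrL_R)_{\Mod_R/})^\twosimeq$, all of whose morphisms have invertible structure $2$-cell. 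For $(Z_2\otimes\ell\xi_{-d}^R)(\varphi)$ I argue on the two tensor factors. The factor $\ell\xi_{-d}^R(\varphi)$ is $s_0$ applied to a morphism of $\SPrL_R$ --- here one uses that $\xi_{-d}^R$ sends every morphism in the image of $\Fin\to\Gr$ to the identity of $*\in\bB\Pic(R)$, the relative Euler characteristic being $0$ --- and hence has invertible structure $2$-cell. For the factor $Z_2(\varphi)$, with $Z_2=\Mod_R^{(-)}\circ M^{(-)}\circ\cF$, the key observation is that for idle $\varphi$ already the morphism $\cF(\varphi)$ of $((\bCospan(\cS))^\rlax_{/\emptyset})^\twosimeq$ has invertible structure $2$-cell: this $2$-cell is precisely the cospan $Y\to W\ot X$ of Notation \ref{nota:morGrS}, which is an equivalence by the very definition of an idle morphism (Definition \ref{defn:GrCob}); and the functors $M^{(-)}$ and $\Mod_R^{(-)}$ then carry morphisms with invertible structure $2$-cell to morphisms with invertible structure $2$-cell --- equivalently, they are compatible with the inclusions of the strict comma categories into the lax ones, using for $\Mod_R^{(-)}$ that the relevant Beck--Chevalley transformations of Subsection \ref{subsec:BCprojformulae} are equivalences. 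Tensoring and then pasting invertible $2$-cells yields an invertible structure $2$-cell for $Z_3(\varphi)$, which together with $d_0Z_3(\varphi)=\Id_{\Mod_R}$ shows that $Z_3(\varphi)$ is an equivalence; by the universal property, $Z_3$ factors through $\GrCob$.

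The main obstacle I anticipate is the bookkeeping inside the nested $\rlax$/$\llax$ comma $\infty$-categories: one must pin down, for a reduced idle $\varphi$, exactly which datum is the ``structure $2$-cell'' of $\cF(\varphi)$, then of $M^{(-)}(\cF(\varphi))$, and finally of $Z_2(\varphi)$, and verify that no non-invertible laxness is created along the way --- i.e.\ that $M^{(-)}$ and $\Mod_R^{(-)}$ genuinely restrict to the strict comma categories. Once that compatibility is recorded, the remaining steps are formal.
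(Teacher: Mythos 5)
Your proof is correct in substance and takes a slightly different route from the paper's. The paper simply writes out $Z_3(\varphi)$ for a reduced idle $\varphi$ as a single explicit commutative diagram in $(\SPrL_R)^\twosimeq$ and observes that the rows are equivalences; along the way it records the computation of $Z_3$ on objects (used again later in the article). You instead keep the factorization of $Z_3$ as the concatenation of $Z_1$ with $Z_2\otimes\ell\xi_{-d}^R$, and check invertibility of the structure $2$-cell factor by factor: the strictness of $\fI_\hPsi$ for $Z_1$, the triviality of $\xi_{-d}^R$ on the image of $\Fin\to\Gr$ for the $\ell\xi_{-d}^R$ factor, and the idleness hypothesis for $Z_2$. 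This is more modular and isolates exactly which hypothesis is used where, at the cost of skipping the explicit formula for $Z_3$ on objects.

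One inaccuracy worth flagging: the claim that $d_0Z_3(\varphi)=\Id_{\Mod_R}$ for \emph{every} morphism $\varphi$, justified by saying that $Z_2\otimes\ell\xi_{-d}^R$ takes values in $((\SPrL_R)^\llax_{/\Mod_R})^\twosimeq$. That last assertion is false: $Z_2$ alone lands there, but after tensoring with $\ell\xi_{-d}^R$ the target is $(\Fun([1],\SPrL_R)^\llax)^\twosimeq$, and the $d_0$-component of $(Z_2\otimes\ell\xi_{-d}^R)(\varphi)$ is $(-)\otimes_R\xi_{-d}^R\cG(\varphi)$, which is an auto-equivalence of $\Mod_R$ but not the identity unless $\xi_{-d}^R\cG(\varphi)\simeq R$. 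The error is harmless: the invertibility criterion you state only needs $d_0Z_3(\varphi)$ to be an equivalence (which holds automatically since $\xi_{-d}^R$ is $\Pic(R)$-valued), and for idle $\varphi$ the $\xi_{-d}^R$-contribution is indeed trivial, as you argue via the Euler characteristic later in the same paragraph. I would simply replace the false general claim by ``$d_0Z_3(\varphi)$ is an equivalence for every $\varphi$'' and keep the rest. A smaller stylistic point: the structure $2$-cell of $\cF(\varphi)$ is not ``the cospan $Y\to W\ot X$'' but rather the morphism in $\cS_{B/}$ given by the map $Y\to W$; that this map (and $X\to W$) is an equivalence is what Definition~\ref{defn:GrCob} provides, and your conclusion stands.
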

\begin{proof}
The image of $(A\to X)\in\Gr_\cS$ along $Z_3$ is the object in  $((\SPrL_R)^\llax_{\Mod_R/})^\twosimeq$ given by the composite functor
\[
\begin{tikzcd}[column sep=50pt]
\Mod_R\ar[r,"p(M^A)^*"]&\Mod_R^{M^A}\ar[r,"(M^X\to M^A)^*"] &\Mod_R^{M^X}\ar[r,"p(M^X)_!"]&\Mod_R,
\end{tikzcd}
\]
where the first arrow accounts for $Z_1$ and the second two for $Z_2\otimes\ell\xi_{-d}^R$. This shows that $Z_3(A\to X)$ is equivalent to the endofunctor $p(M^X)_!p(M^X)^*$ of $\Mod_R$, which in turn is equivalent to $-\otimes_R(M^X\otimes R)$.

Without loss of generality, we may now consider an idle 1-morphism as in Remark \ref{rem:idleshape}, and prove that it is inverted along $Z_3$.
This 1-morphism is sent along $Z_3$ to the outer square of the following commutative diagram in $(\SPrL_R)^\twosimeq$, considered as a morphism between vertical composites. Since the top and bottom rows are invertible functors, we obtain indeed an invertible 1-morphism in $((\SPrL_R)^\llax_{\Mod_R/})^\twosimeq$:
\[
\begin{tikzcd}[column sep=80pt, row sep=10pt]
\Mod_R\ar[rr,equal]\ar[d,"p(M^B)^*"]& &\Mod_R\ar[d,"p(M^A)^*"]\\
\Mod_R^{M^B}\ar[d,"(M^Y\to M^B)^*"]\ar[r,"(M^A\to M^B)^*"] &\Mod_R^{M^A}\ar[d,"(M^X\to M^A)^*"]\ar[r,equal,"(\Id_{M^A})_!"]&\Mod_R^{M^A}\ar[d,"(M^X\to M^A)^*"]\\
\Mod_R^{M^Y}\ar[d,"p(M^X)_!"]\ar[r,"(M^X\simeq M^Y)^*","\simeq"']&\Mod_R^{M^X}\ar[d,"p(M^X)_!"]\ar[r,equal,"(\Id_{M^X})_!"]&\Mod_R^{M^X}\ar[d,"p(M^X)_!"]\\
\Mod_R\ar[r,equal] &\Mod_R\ar[r,equal]&\Mod_R.
\end{tikzcd}
\]
\end{proof}

\subsection{The lax comma categories of \texorpdfstring{$\bB\Mod_R$}{BModR}}
Recall that a colimit-preserving, $R$-linear endofunctor of $\Mod_R$ is of the form $-\otimes_R\zeta$ for some $\zeta\in\Mod_R$, and that the space of $R$-linear natural transformations between $-\otimes_R\zeta$ and $-\otimes_R\zeta'$ is equivalent to $\Mod_R(\zeta,\zeta')$. In few words, the full $\inftwo$-subcategory of $\SPrL_R$ spanned by the object $\Mod_R$ is equivalent to the pointed $\inftwo$-category $\bB\Mod_R$ obtained by delooping once the symmetric monoidal $\infty$-category $\Mod_R$, see Subsection \ref{subsec:deloopingGray}. The core groupoid of $\bB\Mod_R$ is equivalent to $\bB\Pic(R)$, with basepoint denoted by $*$; the endomorphism $\infty$-category $\bB\Mod_R(*,*)$ is equivalent to $\Mod_R$, and horizontal composition is given by tensor product of $R$-modules.

\begin{defn}
We define $((\bB\Mod_R)_{*/}^\llax)^\twosimeq_{\bB\Pic}$ as the wide symmetric monoidal $\infty$-subcategory of $((\bB\Mod_R)_{*/}^\llax)^\twosimeq$ spanned by those morphisms that are sent along the target functor $d_0\colon((\bB\Mod_R)_{*/}^\llax)^\twosimeq\to (\bB\Mod_R)^\twosimeq$ to invertible morphisms. In symbols we have
\[
((\bB\Mod_R)_{*/}^\llax)^\twosimeq_{\bB\Pic}=((\bB\Mod_R)_{*/}^\llax)^\twosimeq\times_{(B\Mod_R)^\twosimeq}\bB\Pic(R).
\]
We define similarly $((\bB\Mod_R)_{*/}^\rlax)^\twosimeq_{\bB\Pic}\subset((\bB\Mod_R)_{*/}^\rlax)^\twosimeq$.
\end{defn}
Roughly speaking, objects in the $\infty$-category $((\bB\Mod_R)_{*/}^\rlax)^\twosimeq_\Pic$ are $R$-modu\-les; a morphism $\zeta\to\zeta'$ in $((\bB\Mod_R)_{*/}^\rlax)^\twosimeq_{\bB\Pic}$ is the datum of an invertible $R$-module $\zeta''$ and a morphism of $R$-modules $\zeta\otimes_R\zeta''\to\zeta'$. The description of $((\bB\Mod_R)_{*/}^\llax)^\twosimeq_{\bB\Pic}$ is similar, except that a morphism $\zeta\to\zeta'$ is the datum of an invertible $R$-module $\zeta''$ and a morphism $\zeta'\to\zeta\otimes_R\zeta''$.
\begin{rem}
Barkan--Steinebrunner consider an alternative construction of the symmetric monoidal $\infty$-category $((\bB\Mod_R)_{*/}^\rlax)^\twosimeq_{\bB\Pic}$. They consider the action of $\Pic(R)$ on $\Mod_R$ given by tensor product; there is an associated lax symmetric monoidal functor $\bB\Pic(R)\to\Catinfty^{\fU_2}$, and its colimit in $\Catinfty^{\fU_2}$, denoted $\Mod_R/\Pic(R)$, is a symmetric monoidal $\infty$-category equivalent to our $((\bB\Mod_R)_{*/}^\rlax)^\twosimeq_{\bB\Pic}$.
\end{rem}

As observed in the proof of Lemma \ref{lem:Zthreefactors}, the functor $Z_3$ takes image inside the $\infty$-subcategory of
$((\SPrL_R|_{\Mod_R})^\llax_{\Mod_R/})^\twosimeq$, comprising all objects and those 1-morphisms that are sent along $d_0$ to an invertible 1-morphism; this $\infty$-subcategory corresponds precisely to $((\bB\Mod_R)_{*/}^\llax)^\twosimeq_{\bB\Pic}$ along the equivalence
\[
((\SPrL_R|_{\Mod_R})^\llax_{\Mod_R/})^\twosimeq\simeq ((\bB\Mod_R)_{*/}^\llax)^\twosimeq.
\]
We shall henceforth abuse notation and regard $Z_3$ as a symmetric monoidal $\infty$-functor $\GrCob\to((\bB\Mod_R)_{*/}^\llax)^\twosimeq_{\bB\Pic}$.

We also have a commutative square in $\CAlg(\Catinfty^{\fU_2})$, whose vertical maps are the target functors, and whose bottom equivalence is induced by the symmetric monoidal automorphism $(-)^{-1}\colon\Pic(R)\to\Pic(R)$:
\[
\begin{tikzcd}
\big(((\bB\Mod_R)_{*/}^\llax)^\twosimeq_{\bB\Pic}\big)^\op\ar[r,"\simeq"']\ar[d,"d_0"]& ((\bB\Mod_R)_{*/}^\rlax)^\twosimeq_{\bB\Pic}\ar[d,"d_0"]\\
\bB\Pic(R)^\op\ar[r,"(-)^{-1}","\simeq"']&\bB\Pic(R).
\end{tikzcd}
\]
In particular we may pass to opposite categories and consider the symmetric monoi\-dal $\infty$-functor $Z_3^\op\colon\GrCob^\op\to((\bB\Mod_R)_{*/}^\rlax)^\twosimeq_{\bB\Pic}$. The composite functor
\[
d_0Z_3^\op\colon\GrCob^\op\to \bB\Pic(R)
\]
agrees with $(\xi_d^R)^\op$, and both functors are constant at $*$ on spaces of objects.

\begin{nota}
We let $\Seg^{\fU_2}\subset s\cS^{\fU_2}$ denote the full $\infty$-subcategory spanned by (possibly non-complete) $\fU_2$-small Segal spaces, i.e. by functors $X\colon\bDelta^\op\to\cS^{\fU_2}$ satisfying the Segal condition: for all $p\ge2$ the canonical map of spaces $X([p])\to X([1])\times_{X([0])}X([1])\times_{X([0])}\dots\times_{X([0])}X([1])$ is an equivalence.
\end{nota}
Note that $\Seg^{\fU_2}\subset s\cS^{\fU_2}$ is closed under products, i.e. it is a symmetric monoidal $\infty$-subcategory.
\begin{defn}
We denote by $\bbB \Pic(R)\in \CAlg(\Seg^{\fU_2})$ the Segal space given by the following pullback of symmetric monoidal Segal spaces
\[
\begin{tikzcd}[row sep=10pt]
\bbB\Pic(R)\ar[dr,phantom,"\lrcorner"very near start]\ar[r]\ar[d]&\bB\Pic(R)\ar[d,"\eta_{[0]_*}"]\\
*\ar[r]& {[0]_*[0]^*}\bB\Pic(R),
\end{tikzcd}
\]
where $[0]\colon*\to\bDelta^\op$ is the functor picking the object $[0]$, and where $\bB\Pic(R)$ is considered as a constant simplicial space. The bottom right corner is informally the Segal space sending $[p]\mapsto (\bB\Pic(R))^{p+1}$, with face maps forgetting coordinates and degeneracy maps inserting $*\in\bB\Pic(R)$; its symmetric monoidal structure relies on the fact that both $[0]^*$ and $[0]_*$ are product preserving.
\end{defn}
Intuitively, $\bbB\Pic(R)([p])\simeq\Pic(R)^p$; face maps are given by forgetting the first or last coordinate, or by multiplying two consecutive coordinates, using the group structure on $\Pic(R)$; and degeneracy maps are given by inserting $*\in\Pic(R)$. 

By definition, for $X\in\Seg^{\fU_2}$, a morphism $X\to\bbB\Pic(R)$ is the same as a morphism $X\to\bB\Pic(R)$ together with a nullhomotopy of the induced map of spaces $X([0])\to\bB\Pic(R)$.
The previous discussion allows us to consider $(\xi_d^R)^\op$ as a symmetric monoidal map of Segal spaces $\GrCob^\op\to\bbB\Pic(R)$, i.e. as an object in $\CAlg(\Seg^{\fU_2}_{/\bbB\Pic(R)})$. Here we model the $\infty$-category $\GrCob^\op$ as a \emph{complete} Segal space.
\begin{defn}
We let $((\bB\Mod_R)_{*/}^\rlax)^\twosimeq_{\bbB\Pic}\in \CAlg(\Seg^{\fU_2}_{/\bbB\Pic(R)})$ denote the pullback of symmetric monoidal Segal spaces
\[
\begin{tikzcd}[row sep=10pt]
((\bB\Mod_R)_{*/}^\rlax)^\twosimeq_{\bbB\Pic}\ar[r]\ar[d]\ar[dr,phantom,"\lrcorner"very near start]&((\bB\Mod_R)_{*/}^\rlax)^\twosimeq_{\bB\Pic}\ar[d]\\
\bbB\Pic(R)\ar[r]&\bB\Pic(R)
\end{tikzcd}
\]
\end{defn}

The previous discussion also allows us to regard $Z_3^\op$ as a morphism from $\GrCob^\op$ to $((\bB\Mod_R)_{*/}^\rlax)^\twosimeq_{\bbB\Pic}$ in $\CAlg(\Seg^{\fU_2}_{/\bbB\Pic(R)})$.

\subsection{Categories enriched in presheaf categories}
In this subsection we make use of the theory of enriched $\infty$-categories as developed by Gepner--Haugseng \cite{GepnerHaugseng}. For a (symmetric) monoidal $\infty$-category $\cC$, Gepner--Haugseng introduce an $\infty$-category $\Alg_\cat(\cC)$ of ``categorical algebras in $\cC$'', whose objects are pairs consisting of a space $S\in\cS$ and a map of generalised non-symmetric operads $\bDelta^\op_S\to\cC$. 
If $\cC$ is symmetric monoidal, then also $\Alg_\cat(\cC)$ admits a symmetric monoidal structures, so that it makes sense to talk of symmetric monoidal categorical algebras in $\cC$.
A lax (symmetric) monoidal functor of (symmetric) monoidal $\infty$-categories $\cC\to\cC'$ allows one to ``transform'' a categorical algebra in $\cC$ into a categorical algebra in $\cC'$. Gepner--Haugseng also introduce an $\infty$-category of ``$\cC$-enriched $\infty$-categories'', denoted $\Cat_\infty^\cC$, which is a certain reflective full $\infty$-subcategory of $\Alg_\cat(\cC)$ spanned by ``complete'' objects; we shall content ourselves with categorical algebras here, and leave the discussion about completions to the interested reader. We will therefore also use terminology that clashes with \cite{GepnerHaugseng} and refer to an object in $\Alg_\cat(\cC)$ as a ``$\cC$-enriched $\infty$-category'', and to a morphism in $\Alg_\cat(\cC)$ as a ``$\cC$-enriched $\infty$-functor''.

The identification $\Alg_\cat(\cS^{\fU_2})\simeq\Seg^{\fU_2}$ from \cite[Theorem 4.4.7]{GepnerHaugseng} is symmetric monoidal, as both $\infty$-categories carry the cartesian symmetric monoidal structure. Using this identification, we can get a symmetric monoidal identification $\Alg_\cat(\cS^{\fU_2}_{/\Pic(R)})\simeq\Seg^{\fU_2}_{/\bbB\Pic(R)}$. It follows that the symmetric monoidal map of Segal spaces $Z_3^\op\colon\GrCob^\op\to((\bB\Mod_R)_{*/}^\rlax)^\twosimeq_{\bbB\Pic}$ corresponds to a symmetric monoidal $\cS^{\fU_2}_{/\Pic(R)}$-enriched functor, which for simplicity we still denote 
$Z_3^\op\colon\GrCob^\op\to((\bB\Mod_R)_{*/}^\rlax)^\twosimeq_{\bbB\Pic}$.

We next recall that $\cS^{\fU_2}_{/\Pic(R)}\simeq\PSh(\Pic(R),\cS^{\fU_2})$ in $\CAlg(\Catinfty^{\fU_3})$, see \cite[Corollary D]{Ramzi}. In this light, the $\cS^{\fU_2}_{/\Pic(R)}$-enriched category $((\bB\Mod_R)_{*/}^\rlax)^\twosimeq_\Pic$ corresponds to the category $\Mod_R$, whose enrichment in $\PSh(\Pic(R),\cS^{\fU_2})$ is given by first considering the self-enrichment of $\Mod_R$ and then by applying the following sequence of lax symmetric monoidal functors, where $\kappa<\fU_1$ is a regular cardinal such that $\Mod_R$ is $\kappa$-accessible (see Subsection \ref{subsec:presheaves} for background):
\[
\begin{tikzcd}[column sep=40pt]
\Mod_R\ar[r,"\yo_\kappa"]&\PSh(\Mod_R^\kappa)\ar[rr,"(\Pic(R)\hto\Mod_R^\kappa)^*"]& &\PSh(\Pic(R))\subset\PSh(\Pic(R);\cS^{\fU_2}).
\end{tikzcd}
\]
We thus learn that $((\bB\Mod_R)_{*/}^\rlax)^\twosimeq_\Pic$ is in fact enriched in the smaller category $\PSh(\Pic(R))=\PSh(\Pic(R);\cS^{\fU_1})$; the same holds also for $\GrCob^\op$, and hence $Z_3^\op$ can be regarded as a $\PSh(\Pic(R))$-enriched symmetric monoidal $\infty$-functor. By applying the symmetric monoidal left adjoints $(\Pic(R)\hto\Mod_R^\kappa)_!$ and $\yo'_\kappa$ to the previous composite of lax symmetric monoidal right adjoints, we finally obtain a $\Mod_R$-enriched symmetric monoidal $\infty$-category $(\GrCob_!\xi_d^R)^\op$ as the image of $\GrCob^\op\in\Alg_\cat(\PSh(\Pic(R)))$, and a $\Mod_R$-enriched symmetric monoidal functor
\[
\GFT_M\colon(\GrCob_!\xi_d^R)^\op\to\Mod_R.
\]
This concludes the proof of Theorem \ref{thm:A}, up to checking that $\GFT_M$ indeed recovers the basic string operations (1)-(3) from Subsection \ref{subsec:basic}; this last step will be achieved in Section \ref{sec:CSproduct}. The proof of Corollary \ref{cor:A1} from Theorem \ref{thm:A} is also immediate, using the fact that taking homotopy groups is a lax symmetric monoidal functor $\Mod_R=\Mod_R(\Sp)\to\Mod_{\pi_*(R)}(\Ab^\Z)$.
\begin{rem}
The above composite $(\Pic(R)\hto\Mod_R^\kappa)^*\yo_\kappa$, combined with the equivalence $\cS_{/\Pic(R)}\simeq\PSh(\Pic(R))$, gives rise to a lax symmetric monoidal functor $\Mod_R\to\cS_{/\Pic(R)}$; this functor associates with an $R$-module $\zeta$ the moduli space of $R$-linear maps $\zeta'\to\zeta$ with $\zeta'\in\Pic(R)$; this is a space over $\Pic(R)$ by forgetting the map and only retaining $\zeta'$. The left adjoint functor $\cS_{/\Pic(R)}\to\Mod_R$ is the functor sending $\zeta\colon X\to\Pic(R)$ to $\colim_X\zeta\in\Mod_R$. In particular the morphism module from $(\emptyset\to Y)$ to $(\emptyset\to X)$ in $\GrCob_!\xi_d^R$ is indeed $\colim_{\fM_\Gr(Y,X)}\xi_d^R$ as mentioned in Subsection \ref{subsec:graphfieldtheories}.
\end{rem}
\subsection{Characteristic classes of homotopy automorphisms}
We conclude the section by proving Corollary \ref{cor:A2}. The functor $\GFT_M\colon\colon(\GrCob_!\xi_d^R)^\op\to\Mod_R$ depends naturally on $M\in\fM_{\Poin,d}^{R,+}$, so that we obtain a map of spaces
\[
\GFT_{(-)}\colon\fM_{\Poin,d}^{R,+}\to\CAlg(\Alg_\cat(\Mod_R))\,((\GrCob_!\xi_d^R)^\op,\Mod_R).
\]
The crucial observation is now that composing with evaluation at the monoidal unit $\emptyset\in(\GrCob_!\xi_d^R)^\op$ gives rise to the constant map $\fM_{\Poin,d}^{R,+}\to\Mod_R^\simeq$ at $R\in\Mod_R$; indeed, for any $M\in\fM_{\Poin,d}^{R,+}$, the mapping space $M^{\emptyset}$ is canonically identified with $*\in\cS$, and thus $\GFT_M(\emptyset)\simeq M^\emptyset\otimes R$ is canonically identified with $R$.

We can thus compose $\GFT_{(-)}$ with the evaluation at $\GrCob_!\xi_d^R(\emptyset,\emptyset)\in\Mod_R$, i.e. the endomorphism object of $\emptyset$; we thus obtain a map of spaces
\[
\fM_{\Poin,d}^{R,+}\to\Mod_R(\colim_{\fM_\Gr(\emptyset,\emptyset)}\xi_d^R ,\underline{\Hom}_R(R,R)),
\]
where $\underline{\Hom}_R(R,R)\simeq R\in\Mod_R$ is the internal endomorphism object of $R\in\Mod_R$. The last datum is then equivalent to an $R$-linear pairing
\[
(\fM_{\Poin,d}^{R,+}\otimes R)\otimes_R(\colim_{\fM_\Gr(\emptyset,\emptyset)}\xi_d^R)\to R,
\]
and the statement of Corollary \ref{cor:A2} is obtained by restricting to the component in $\fM_{\Poin,d}^{R,+}$ of a given oriented $R$-Poincar\'e duality space $M$.

\section{Recovering the basic string operations}
\label{sec:CSproduct}
Let $R\in\CAlg(\Sp)$ and let
$M$ be a topological, closed, $R$-oriented $d$-dimensional manifold. In this subsection we conclude the proof of Theorem \ref{thm:A}, by identifying the basic operations (1)-(3) from Subsection \ref{subsec:basic} with the analogous operations provided by the graph field theory $\GFT_M$.

\subsection{Recovering the operations (1) and (2)}
We start with the basic operation (1). Given a map of spaces $f\colon Y\to X$, we consider it as the graph cobordism corresponding to the following diagram of spaces:
\[
\begin{tikzcd}[row sep=8pt]
\emptyset\ar[d]\ar[r,equal]&\emptyset\ar[r,equal]\ar[d]&\emptyset\ar[d]\ar[dl,phantom,"\urcorner"very near end]\\
Y\ar[r,"f"]&X\ar[r,equal]&X.
\end{tikzcd}
\]
The functor $Z_3$ from Notation \ref{nota:Z3} sends $f$ to the following diagram in the $(\infty,2)$-category $\SPrL_R$, giving a morphism in $((\SPrL_R)^\llax_{\Mod_R/})^\twosimeq$ between horizontal composites; here the left and right squares represent the contributions of $Z_1$ and $Z_2\otimes\ell\xi_{-d}^R$:
\[
\begin{tikzcd}[row sep=15pt, column sep=60pt]
\Mod_R\ar[r,equal]\ar[d,equal]&\Mod_R\ar[d,equal]\ar[r,"p(M^Y)_!p(M^Y)^*"]&\Mod_R\ar[d,equal]\\
\Mod_R\ar[r,equal]&\Mod_R\ar[r,"p(M^X)_!p(M^X)^*"']\ar[ur,Rightarrow,"\epsilon_{M^f_!}"near start]&\Mod_R.
\end{tikzcd}
\]
Evaluating at $R\in\Mod_R$ in the top left corner, we obtain that the image along $\GFT_M$ of the previous graph cobordism is the following morphism in $\Mod_R$:
\[
p(M^X)_!p(M^X)^*(R)\simeq p(M^Y)_!M^f_!(M^f)^*p(M^Y)^*(R)\xrightarrow{\epsilon_{M^f_!}}p(M^Y)_!p(M^Y)^*(R).
\]
After taking homotopy groups we obtain the map $H_*(M^f;R)$ as desired.

We next consider the basic operation (2). Given a decomposition $Y=X\sqcup *$, we obtain the graph cobordism corresponding to the following diagram of spaces:
\[
\begin{tikzcd}[row sep=9pt]
\emptyset\ar[r]\ar[d]&*\ar[d]&\emptyset\ar[l]\ar[d]\ar[dl,phantom,"\urcorner"very near end]\\
Y\ar[r,equal]&Y&X.\ar[l]
\end{tikzcd}
\]
The image of the previous along $Z_3$ is the following diagram in $\SPrL_R$, in which we abbreviated $p=p(M)\colon M\to*$, and we denote by $[-d]\simeq-\otimes R[-d]$ the $d$-fold desuspension functor:
\[
\begin{tikzcd}[column sep=60pt,row sep=15pt]
\Mod_R\ar[d,equal]\ar[r,equal]&\Mod_R\ar[d,"p_*p^*\simeq"'near end,"{p_!p^*[-d]}"near end]\ar[r,"p(M^Y)_!p(M^Y)^*"]&\Mod_R\ar[d,"{[-d]}"]\\
\Mod_R\ar[r,equal]\ar[ur,Rightarrow,"\eta_{p_*}"]&\Mod_R\ar[r,"p(M^X)_!p(M^X)^*"']&\Mod_R.
\end{tikzcd}
\]
Evaluating at $R$, we obtain that the image along $\GFT_M$ of the previous graph cobordism is the following morphism in $\Mod_R$, with target shifted by $-d$:
\[
\begin{split}
p(M^X)_!p(M^X)^*(R)&\simeq R\otimes p(M^X)_!p(M^X)^*(R)\xrightarrow{\eta_{p_*}\otimes\Id}\\
&\to p_*p^*(R)\otimes p(M^X)_!p(M^X)^*(R)\\
&\simeq p_!p^*(R)\otimes p(M^X)_!p(M^X)^*(R)[-d]\\
&\simeq p(M^Y)_!p(M^Y)^*(R)[-d].
\end{split}
\]
On homotopy groups, the previous composite agrees with the map $H_*(M^X;R)\to H_{*+d}(M^Y;R)$ given by cross product with the fundamental class of $M$.

\subsection{The Chas--Sullivan product}
\label{subsec:CSclassical}
We next recall the classical definition of the Chas-Sullivan product.
Let $X\in\Top$ be a topological space, and let $Y$ be a topolo\-gical space obtained from $X$ by attaching a 1-cell $I=[0,1]$ along a map $\psi\colon\del I\to X$. Up to replacing $X$ with the mapping cylinder of $\psi$, we may assume that $\psi$ is a cofibration, and in particular we may regard $\del I$ as a subset of $X$; in this case $Y$ is homotopy equivalent to the quotient $Y/I\cong X/\del I$, and $M^Y$ is homotopy equivalent to $M^{Y/I}\cong M^{X/\del I}$.
Evaluation at $\del I$ gives a map $e\colon M^X\to M^2$, which is a fibre bundle. If $\Delta\colon M\hookrightarrow M^2$ denotes the diagonal inclusion, we obtain a pullback square in $\Top$ as follows, with $e_1$ defined as the restriction of $e$ over $M$:
\[
 \begin{tikzcd}[row sep=10pt]
  M^Y\ar[r,phantom,"\simeq"]&M^{X/\del I}\ar[r, hook,"\tilde\Delta"]\ar[d,"e_1"']\ar[dr,phantom,"\lrcorner",very near start] & M^X\ar[d,"e"]\\
  &M\ar[r,hook,"\Delta"] & M^2.
 \end{tikzcd}
\]
Let $U$ be a neighbourhood of $\Delta(M)$ in $M^2$ that is
homeomorphic to the total space of the normal microbundle\footnote{For simplicity we will not distinguish between microbundles and topological $\R^d$-bundles.} $N_\Delta M\to M$. Let $V=e^{-1}(U)$ be the corresponding neighbourhood of $M^{X/\del I}$ in $M^X$. Then the quotient $M^2/(M^2\setminus U)$ is homeomorphic to the Thom space $\Th(N_\Delta M\to M)$; similarly, $V$ is homeomorphic to the total space of the microbundle $e_1^*(N_\Delta M)\to M^{X/\del I}$ , and $M^X/(M^X\setminus V)$ is homeomorphic to the Thom space $\Th(e_1^*(N_\Delta M)\to M^{X/\del I})$.

Recall from Definition \ref{defn:orientation} that an $R$-orientation on $M$ is an isomorphism $\omega_M^R\simeq p^*(R[-d])$, where $p=p(M)\colon M\to *$ is the terminal map. An $R$-orientation on $M$ yields an $R$-orientation on $M^2$, as product of the orientations of the two factors: more precisely, we have a canonical identification $\omega_{M^2}^R\simeq p_1^*(\omega_M^R)\otimes p_2^*(\omega_M^R)$, where $p_1,p_2\colon M^2\to M$ are the two projections, and we may identify the latter expression with $(p^2)^*(R[-d])\otimes (p^2)^*(R[-d])\simeq (p^2)^*(R[-2d])$ \footnote{The reader will notice that the last identification $R[-d]\otimes R[-d]\simeq R[-2d]$ also depends on a choice of \emph{order} of the two factors, i.e. it can be sensitive to swapping the two factors; for instance this occurs when $R=\Z$ and $d$ is odd. This is reflected in the generic non-triviality of the coefficient systems $\xi_d^R$.}.
The normal microbundle $N_\Delta M$, which is a topological $\R^d$-bundle over $M$, gives rise to a fibration of pointed spaces on $M$, with fibre the pointed space $S^d$, by taking one-point compactifications fibrewise; by further taking suspension spectra fibrewise we obtain a spherical fibration $n_\Delta M\in \Sp^M$, with fibres isomorphic to $\bS[d]$. We have moreover a canonical identification
\[
\Delta^*(\omega_{M^2}^\bS)\otimes_\bS n_\Delta M\simeq \omega_M^\bS,
\]
coming from the isomorphism of microbundles $\Delta^*(TM^2)\cong TM\oplus N_\Delta (M)$ and the identifications $\omega_M^\bS\simeq TM^{-1}$ and $\omega_{M^2}^\bS\simeq (TM^2)^{-1}$. Here $TM$ and $TM^2$ denote the tangent microbundles.

The $R$-orientations on $M$ and $M^2$, together with the isomorphism $n_\Delta M \otimes_\bS R\simeq \omega_M^R\otimes_R \Delta^*(\omega_{M^2}^R)^{-1}$, give an equivalence $n_\Delta M\simeq p^*(R[d])$; by pullback along $e_1$ we also obtain an equivalence $e_1^*(n_\Delta M)\otimes_\bS R\simeq p(M^{X/\del I})^*(R[d])$.

We can then define an operation $H_*(M^X;R)\to H_{*-d}(M^W;R)$, which we shall refer to as the \emph{general Chas--Sullivan operation}, as the composite
\[
\begin{split}
 &H_*(M^X;R)\to H_*(M^X, M^X\setminus V;R)\cong\tilde H_*(\Th(e_1^*(N_\Delta M)\to M^{X/\del I});R)\cong\\
 &H_*(M^{X/\del I};e_1^*(n\Delta(M))\otimes_\bS R)\cong H_*(M^{X/\del I};R[d])\cong H_{*-d}(M^Y;R).
\end{split}
\]
Here the isomorphism connecting the two lines is the Thom isomorphism, identifying the homology of a Thom space with the homology of the base twisted in the associated invertible parametrised $R$-module. 

The mainly studied case of the previous string operation is the Chas-Sullivan product. We let $X$ be the disjoint union $S^1\sqcup S^1$, and we let $\psi$ send the two points of $\del I$ to two points lying on the two different copies of $S^1$. We then have $Y\simeq S^1\vee S^1$, and we obtain a map $H_*(LM\times LM;R)\to H_{*-d}(M^{S^1\vee S^1};R)$. We can further compose this with the type (1) operation $H_{*-d}(M^{S^1\vee S^1};R)\to H_{*-d}(LM;R)$ induced by the pinch map $S^1\to S^1\vee S^1$; the resulting composite $H_*(LM\times LM;R)\to H_{*-d}(LM;R)$ is the Chas-Sullivan product \cite{ChasSullivan}.

To interpret the above construction in terms of parametrised $R$-modules,
define the topological spaces $\del U:=U\setminus\Delta(M)$ and $\del V:=V\setminus \tilde\Delta(M^{X/\del I})=e^{-1}(\del U)$, and
consider the following commutative cube in $\cS$, whose top and bottom face are pushouts, and whose vertical faces are pullbacks:
\[
\begin{tikzcd}[row sep=13pt]
\del V\ar[rr,"\tilde a"]\ar[dr,"\tilde b"]\ar[dd,"\del e"]& &V\simeq M^Y\ar[dr,"\tilde\Delta"]\ar[dd,"e_1"near end]\\
&M^X\setminus V\ar[rr,"\tilde c" near start]\ar[dd,"e_2"near start]& &M^X\ar[dd,"e"]\\
\del U\ar[rr,"a"near end]\ar[dr,"b"]& &U\simeq M\ar[dr,"\Delta"]\\
&M^2\setminus U\ar[rr,"c"] & &M^2.
\end{tikzcd}
\]
Then the above composite giving the general Chas--Sullivan string operation at the level of homology can be identified with the evaluation at $R$ of the natural transformation obtained from the top composite in the following diagram, after precomposition by $p(M^X)^*$ and postcomposition by $(p^2)_!$:
\[
\begin{tikzcd}
e_!\ar[r]\ar[d]&e_!\cof(\epsilon_{\tilde c_!})&e_!\tilde\Delta_!\cof(\epsilon_{\tilde a_!})\tilde\Delta^*\ar[l,"\simeq"]&e_!\tilde\Delta_!\tilde\Delta^*[d]\ar[l,"TI"',"\simeq"]\\
\cof(\epsilon_{c_!})e_!\ar[ur,"\simeq"',"BC"]&\Delta_!\cof(\epsilon_{a_!})\Delta^*e_!\ar[l,"\simeq"]\ar[ur,"\simeq"',"BC"]&\Delta_!\Delta^*e_![d].\ar[l,"TI"',"\simeq"]\ar[ur,"\simeq"',"BC"]
\end{tikzcd}
\]
In the previous diagram we use Notation \ref{nota:fibcofalpha}, and the arrows labeled ``$TI$'' are the Thom isomorphisms. Concretely, the projection formula allows us to identify the endofunctor $\cof(\epsilon_{a_!})\colon\Mod_R^M\to\Mod_R^M$ with the endofunctor $-\otimes\cof(\epsilon_{a_!})(p^*(R))$; we may moreover identify $\cof(\epsilon_{a_!})(p^*(R))\simeq n_\Delta M\otimes_\bS R$; and as we saw above, the $R$-orientation on $M$ identifies the latter endofunctor with the $d$-fold suspension. The commutativity of the previous diagram allows us to identify the general Chas--Sullivan operation with the evaluation at $R$ of the top composite natural transformation in the following diagram, after precomposition by $p(M^X)^*\simeq e^*(p^2)^*$:
\[
\begin{tikzcd}[column sep=18pt]
p^2_!e_!\ar[r]\ar[d,"\eta_{\Delta_*}"]&p^2_!\cof(\epsilon_{c_!})e_!&p_!\cof(\epsilon_{a_!})\Delta^*e_!\ar[l,"\simeq"]&p_!\Delta^*e_![d]\ar[l,"TI"',"\simeq"]\ar[r,"\simeq"',"BC"]&p_!(e_1)_!\tilde\Delta^*[d]\\
p^2_!\Delta_*\Delta^*e_!&p^2_*e_![2d]\ar[ul,"PD"',"\simeq"]\ar[r,"\eta_{\Delta_*}"]&p_*\Delta^*e_![2d]\ar[u,"PL"',"\simeq"]\ar[ur,"PD","\simeq"']\ar[ll,bend left=20,"PD"',"\simeq"]
\end{tikzcd}
\]
In the last diagram we use that $(U,\del U)$ is an $R$-Poincar\'e duality pair, obtained as the total space of a fibration pair $(U,\del U)\to M$ with base an $R$-Poincar\'e duality space, and fibre an $R$-Poincar\'e duality pair (in fact, a pair of the form $(*,S^{d-1})$); we have denoted by ``$PL$'' the Poincar\'e--Lefschetz duality isomorphism. The middle pentagon commutes by \cite[Corollary 4.5.2]{BHKK}, whereas the right triangle commutes by \cite[Theorem 6.4.3, Example 6.5.1]{BHKK}.

We thus obtain that the general Chas--Sullivan operation is the evalutation at $R$ of the following composite transformation of endofunctors of $\Mod_R$:
\[
\begin{split}
&p(M^X)_!p(M^X)^*\simeq p^2_!e_!e^*(p^2)^*\xrightarrow{\eta_{\Delta_*}}p^2_!\Delta_*\Delta^*e_!e^*(p^2)^*\overset{PD}{\simeq}p^2_*\Delta_*\Delta^*e_!e^*(p^2)^*[2d]\simeq\\
&p_*\Delta^*e_!e^*(p^2)^*[2d]\overset{PD}{\simeq}p_!\Delta^*e_!e^*(p^2)^*[d]\overset{BC}{\simeq}p_!(e_1)_!\tilde\Delta^*e^*(p^2)^*\simeq p(M^Y)_!p(M^Y)^*.
\end{split}
\]
\subsection{Recovering the operation (3)}
Given a decomposition $Y\simeq X\sqcup_{\ul2}*\in\cS$, we obtain the graph cobordism corresponding to the following diagram of spaces:
\[
\begin{tikzcd}[row sep=9pt]
\emptyset\ar[d]\ar[r]&*\ar[d,"\phi"']&\ul2\ar[l]\ar[d,"\psi"]\ar[dl,phantom,"\urcorner"very near end]\\
Y\ar[r,equal]&Y&X.\ar[l,"i"']
\end{tikzcd}
\]
The image of the previous along $Z_3$ is the following diagram in $\SPrL_R$, in which the left square gives the 1-morphism $\fe_\hPsi$ in 
$((\SPrL_R)^\llax_{\Mod_R/})^\twosimeq$ from Subsection \ref{subsec:unitalenhancement}; we keep denoting $e=M^\psi$ and $e_1=M^\phi$:
\[
\begin{tikzcd}[column sep=100pt,row sep=30pt]
\Mod_R\ar[d,equal]\ar[r,equal]&\Mod_R\ar[d,"(1^2\, 
p^2)_!"'pos=.5,"(1^2\,\Delta)_*\bar\Delta_! 
p^*\simeq"'pos=.8,"{\Delta_!p^*[d]}"pos=.8]\ar[r,"p(M^Y)_!p(M^Y)^*"]&\Mod_R\ar[d,"{[d]}"]\\
\Mod_R\ar[r,"(p^2)^*"']\ar[ur,Rightarrow,"\eta_{\Delta_*}","BC"'near start]&\Mod_R^{M^2}\ar[r,"p(M^X)_!e^*"']&\Mod_R.
\end{tikzcd}
\]
The diagram yields the following composite transformation of endofunctors of $\Mod_R$, whose evaluation at $R$ gives the image of the above graph cobordism along $\GFT_M$:
\[
\begin{split}
p(M^X)_!p(M^X)^*&\simeq p(M^X)_!e^*(1^2\,p^2)_!\delta_!(p^2)^*\xrightarrow{\eta_{\Delta_*}} p(M^X)_!e^*(1^2\,p^2)_!\delta_!\Delta_*\Delta^*(p^2)^*\\
&\xrightarrow{BC} p(M^X)_!e^*(1^2\,p^2)_!(1^2\,\Delta)_*\bar\Delta_!\Delta^*(p^2)^*\\
&\overset{2PD}{\simeq} p(M^X)_!e^*(1^2\,p)_!\bar\Delta_!p^*[d]\simeq p(M^X)_!e^*\Delta_!\Delta^*(p^2)^*[d]\\
&\overset{BC}{\simeq} p(M^X)_!\tilde\Delta_!(e_1)^*\Delta^*(p^2)^*[d]\simeq p(M^Y)_!p(M^Y)^*[d].
\end{split}
\]
In the previous composite we have labeled ``$2PD$'' the composite of two Poincar\'e duality equivalences.
We next observe that the composite
\[
\delta_!\xrightarrow{\eta_{\Delta_*}}\delta_!\Delta_*\Delta^*\xrightarrow{BC}(1^2\,\Delta)_*\bar\Delta_!\Delta^*
\]
agrees with the composite $\delta_!\xrightarrow{\eta_{(1^2\,\Delta)_*}}
(1^2\,\Delta)_*(1^2\,\Delta)^*\delta_!\overset{BC}{\simeq}(1^2\,\Delta)_*\bar\Delta_!\Delta^*$; using this, we can simplify the previous transformation as the following one:
\[
\begin{split}
&p(M^X)_!p(M^X)^*\simeq (p^2)_!e_!e^*(1^2p^2)_!\delta_!(p^2)^*\xrightarrow{\eta_{(1^2\Delta)_*}}\\
&(p^2)_!e_!e^*(1^2p^2)_!(1^2\Delta)_*(1^2\Delta)^*\delta_!(p^2)^*\overset{2PD}{\simeq} (p^2)_!e_!e^*(1^2p)_!(1^2\Delta)^*\delta_!(p^2)^*[d]\\
&\overset{BC}{\simeq}(p^2)_!e_!e^*(1^2p)_!\bar\Delta_!\Delta^*(p^2)^*[d]\simeq(p^2)_!e_!e^*\Delta_!\Delta^*(p^2)^*[d]\\
&\overset{BC}{\simeq} (p^2)_!e_!\tilde\Delta_!(e_1)^*\Delta^*(p^2)^*[d]\simeq p(M^Y)_!p(M^Y)^*[d].
\end{split}
\]
We next observe that the endofunctor $e_!e^*\colon\Mod_R^{M^2}\to\Mod_R^{M^2}$ is equivalent by the projection formula to the endofunctor $-\otimes e_!p(M^X)^*(R)$, and in particular it commutes with any $R$-linear natural transformation between $R$-linear functors. This allows us to rewrite the previous as the following composite:
\[
\begin{split}
&p(M^X)_!p(M^X)^*\simeq (p^2)_!(1^2p^2)_!\delta_!e_!e^*(p^2)^*\xrightarrow{\eta_{(1^2\Delta)_*}}\\
&(p^2)_!(1^2p^2)_!(1^2\Delta)_*(1^2\Delta)^*\delta_!e_!e^*(p^2)^*\overset{2PD}{\simeq} (p^2)_!(1^2p)_!(1^2\Delta)^*\delta_!e_!e^*(p^2)^*[d]\\
&\overset{BC}{\simeq}(p^2)_!(1^2p)_!\bar\Delta_!\Delta^*e_!e^*(p^2)^*[d]\simeq p_!\Delta^*e_!e^*(p^2)^*[d]\\
&\overset{BC}{\simeq} p_!(e_1)_!\tilde\Delta^*e^*(p^2)^*[d]\simeq p(M^Y)_!p(M^Y)^*[d].
\end{split}
\]
Finally, we observe that the functor $(p^2)^*$ can be exchanged with all of the functors $(1^2p^2)_!$, $(1^2\Delta)_*$ etc., by symmetric monoidality of $\SPrL_R$. 
We may therefore replace the previous with the following composite
\[
\begin{split}
&p(M^X)_!p(M^X)^*\simeq p^2_!(p^21^2)_!\delta_!e_!e^*(p^2)^*\xrightarrow{\eta_{\Delta_*}}\\
&p^2_!\Delta_*\Delta^*(p^21^2)_!\delta_!e_!e^*(p^2)^*\overset{2PD}{\simeq} p_!\Delta^*(p^21^2)_!\delta_!e_!e^*(p^2)^*[d]\\
&\overset{BC}{\simeq}p_!(p^21)_!\bar\Delta_!\Delta^*e_!e^*(p^2)^*[d]\simeq p_!\Delta^*e_!e^*(p^2)^*[d]\\
&\overset{BC}{\simeq} p_!(e_1)_!\tilde\Delta^*e^*(p^2)^*[d]\simeq p(M^Y)_!p(M^Y)^*[d].
\end{split}
\]
which agrees with the one given at the end of Subsection \ref{subsec:CSclassical} by the equivalences $(1^2p^2)\delta\simeq1^2$ and $p(p^21)\Delta\simeq p$.

\section{Morphism spaces in \texorpdfstring{$\GrCob$}{GrCob}}
\label{sec:GrCobspaces}
In this final section we analyse the morphism spaces in $\GrCob$, and prove in particular Theorem \ref{thm:B}. Throughout this section we abbreviate $\cC=\Gr_\cS$.
\subsection{A colimit formula}
Recall Definition \ref{defn:GrCob}.
In this subsection we denote by $W\subset\cC$ the wide subcategory spanned by idle morphisms, so that we have $\GrCob:=L(\cC,W)$ according to the notation from Subsection \ref{subsec:localisation}.

As already observed in Remark \ref{rem:idleshape}, any object in $\GrCob$ is equivalent to the image of an object in $\cC$ of the form $(\emptyset\to X)$ along the localisation functor.
\begin{nota}
For $X\in\cS$ we abbreviate $\Fin_{/X}:=\Fin\times_\cS\cS_{/X}$.
\end{nota}

\begin{nota}
\label{nota:colimFinX}
We fix an object $x=(\emptyset\to X)\in\cC$ for the rest of the subsection; we let $W(x)$ denote the full subcategory of $\cC_{x/}$ spanned by objects that are morphisms in $W$ with source $x$, and we let $\pi(x)\colon W(x)\to\cC_{x/}$ denote the inclusion. 
\end{nota}

\begin{rem}
\label{rem:2of3}
Since $W$ is defined as the class of morphisms in $\cC$ that are inverted along $D_0\colon\cC\to\Cospan(\cS)$, we have that $W$ satisfies the 2-of-3 property: given morphisms $f\colon x\to y$ and $g\colon y\to z$ in $\cC$, we have that if any two of $f,g,gf$ belong to $W$, so does the third.
As a consequence, we have an equivalence $W(x)\simeq W_{x/}$.

By Remark \ref{rem:idleshape}, and in the light of Lemma \ref{lem:rpo}, we may identify $W$ with the $\infty$-subcategory of $\Cospan(\Fun([1],\cS))$ spanned by objects whose image along $D_1$ is a finite set, and by morphisms that are sent to maps of finite sets along $D_1$, and to equivalences along $D_0$. In other words, we have an identification $W\simeq\Fin\times_\cS\Fun([1],\cS)\times_\cS\cS^\simeq$. Similarly, we have an identification $W_{x/}\simeq\Fin_{/X}$.
\end{rem}
The goal of this subsection is to prove the following proposition.
\begin{prop}
\label{prop:colimFinX}
The data $\pi(x)\colon W(x)\to\cC_{x/}$ introduced in Notation \ref{nota:colimFinX} give a left calculus of fractions at $x$ for $(\cC,W)$ in the sense of Definition \ref{defn:leftcalculus}. 
\end{prop}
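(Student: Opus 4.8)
The plan is to verify the three conditions in Definition~\ref{defn:leftcalculus}; the first two are formal, and the third carries the essential content.

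\emph{Conditions (i) and (ii).} By the 2-of-3 property recorded in Remark~\ref{rem:2of3} there is an equivalence $W(x)\simeq W_{x/}$, and combining Remark~\ref{rem:idleshape} with Lemma~\ref{lem:rpo} this is further identified with $\Fin_{/X}$ in such a way that $\pi(x)$ becomes the functor sending $(A\to X)\in\Fin_{/X}$ to the idle morphism $x=(\emptyset\to X)\to(A\to X)$, and $d_0\pi(x)$ becomes the functor $\Fin_{/X}\to\Gr_\cS$ remembering only the underlying object $(A\to X)$. The category $\Fin_{/X}$ has the initial object $(\emptyset\to X)$, which under this identification is exactly $\Id_x$; this is (i). Condition (ii) holds by construction, $W(x)$ being the full subcategory of $\cC_{x/}$ spanned by the idle morphisms with source $x$.

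\emph{Condition (iii).} I must show that $(d_0\pi(x))_!(*)\in\PSh(\Gr_\cS)$ inverts every idle morphism. By the formula for $(d_0\pi(x))_!(*)$ recorded after Definition~\ref{defn:leftcalculus}, for $y=(B\to Y)$ we have $(d_0\pi(x))_!(*)(y)\simeq|\cC_{y/}\times_\cC W(x)|$, and by Lemma~\ref{lem:rpo} the indexing $\infty$-category is the category $\mathcal{J}_y$ of graph cobordisms out of $y$ landing over the fixed base $X$, i.e. of tuples $(A\to X;\ B\to G\leftarrow A;\ Y\to X\sqcup_A G\text{ under }B)$ with $(B\to G\leftarrow A)\in\Gr(B,A)$, with morphisms refining the auxiliary datum $(A,G)$. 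Given an idle morphism $w\colon y'\to y$, restriction along $w$ defines a functor $\mathcal{J}_y\to\mathcal{J}_{y'}$ inducing the relevant map on realizations, and it suffices to show this functor is a weak equivalence. Since any presheaf inverts equivalences in $\Gr_\cS$ automatically, and since every idle morphism factors up to such equivalences as a composite of idle morphisms that adjoin one point to the finite-set datum and idle morphisms that fold two points of it, it is enough to treat these two types. For such a generator I would construct a functor $\mathcal{J}_{y'}\to\mathcal{J}_y$ that absorbs the change of finite-set datum into the auxiliary graph --- concretely, enlarging $G$ by a disjoint vertex glued, via an enlarged $A$, to the relevant point of $X$, an operation that leaves the pushout $X\sqcup_A G$ unchanged up to equivalence --- together with natural transformations relating the two composites with the restriction functor to the respective identities, whose components are morphisms of $\mathcal{J}_y$, resp. $\mathcal{J}_{y'}$. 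These data exhibit $|\mathcal{J}_y|\simeq|\mathcal{J}_{y'}|$, giving (iii).

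The step I expect to be the main obstacle is the coherence of the absorption construction: the functors $\mathcal{J}_{y'}\to\mathcal{J}_y$ and the connecting transformations must be produced not at the level of strict diagrams but compatibly with the pullback presentation of $\Gr_\cS$ from Definition~\ref{defn:GrS} and with the $\Gr$-structure on the middle terms of the cospans, and one must verify that the connecting transformations really land among idle morphisms --- equivalently, that the restriction functor is cofinal in the sense of Quillen's Theorem~A, the absorbed object serving as an initial object of each comma category. Isolating and proving this contractibility statement is, to my mind, the crux of the argument.
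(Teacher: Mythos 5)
Your treatment of conditions (i) and (ii) is correct and matches the paper, and the overall shape of your strategy for (iii) --- reduce to idle morphisms that change the finite-set datum one element at a time, then argue via comma categories --- is broadly right, though the paper sidesteps the ``fold two points'' case entirely by first reducing, via the two-of-three property, to idle morphisms $(\emptyset\to Y)\to y$ and then proceeding by an induction that only ever adds single elements.

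The genuine gap is in the absorption step. For an object of $\mathcal{J}_{y'}$ with pushout $W'=X\sqcup_{A'}G'$, the new element $b\in B$ acquires an image in $W'$ along $B\to Y\to W'$, but this image need not lift along $X\to W'$ (it may lie in a part of $G'$ not touching $X$), and when lifts do exist they form the homotopy fibre $\fib_b(X\to W')$, which is generally not contractible. So there is no well-defined absorption functor, and the comma categories you hope have an initial object do not. The paper's actual computation is different: after passing to the full subcategory of \emph{reduced} objects via the right adjoint of Lemma~\ref{lem:redrightadjoint}, the restriction functor becomes a cocartesian fibration $\cD_\red\to\cD'_\red$ (Lemma~\ref{lem:redcocartesian}), and each fibre $\cD_\red^{z'}$ has two kinds of objects --- those in which $b$ marks a fresh based tree attached along a lift to $X$ (your absorption), with moduli $\fib_b(X\to W)$, and those in which $b$ maps directly to a non-tree component of $G'$, with moduli $\fib_b(G'_2\to W)$ --- with morphisms running only from the first type to the second and parametrised by $\fib_b(A'_2\to W)$. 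Contractibility of the fibre is therefore \emph{not} witnessed by an initial object; it comes from the identification of the classifying space with the pushout $\fib_b(X\to W)\sqcup_{\fib_b(A'_2\to W)}\fib_b(G'_2\to W)\simeq\fib_b(W\xrightarrow{\simeq}W)\simeq *$, which uses descent in $\cS$ together with the pushout presentation $W\simeq X\sqcup_{A'_2}G'_2$. Both the reduction to reduced objects and this two-type pushout analysis are missing from your outline and cannot be replaced by an adjunction or a Quillen Theorem~A argument with one-point comma categories.
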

\begin{defn}
Given a graph cobordism between finite sets $B\to G\ot A$, we say that a component of $G$ is a \emph{based tree} if it is contractible and its preimage in $A$ consists of a unique point. Compare the terminology with \cite[Definition 3.5]{Bianchi:graphcobset}.
\end{defn}
\begin{defn}
For $y=(B\to Y)\in\cC$, we represent an object in $\cC_{y/}\times_\cC\Fin_{/X}$ by a diagram as in Notation \ref{nota:morGrS}.
We say that an object is \emph{reduced} if every based tree in $G$ has preimage in $B$ consisting of precisely one element.
We let $(\cC_{y/}\times_\cC\Fin_{/X})_\red$ denote the full subcategory of $\cC_{y/}\times_\cC\Fin_{/X}$ spanned by reduced objects.
\end{defn}

\begin{lem}
\label{lem:redrightadjoint}
For an object $y=(B\to Y)\in\cC$, the inclusion $(\cC_{y/}\times_\cC\Fin_{/X})_\red\hto\cC_{y/}\times_\cC\Fin_{/X}$ admits a right adjoint, which we denote
\[
\red_y\colon \cC_{y/}\times_\cC\Fin_{/X}\to(\cC_{y/}\times_\cC\Fin_{/X})_\red.
\]
\end{lem}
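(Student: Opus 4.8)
The plan is to verify the standard criterion for the existence of a right adjoint to a fully faithful inclusion: a fully faithful functor $i\colon\mathcal A\hto\mathcal B$ of $\infty$-categories admits a right adjoint precisely when, for every $Q\in\mathcal B$, the $\infty$-category $\mathcal A\times_{\mathcal B}\mathcal B_{/Q}$ of objects of $\mathcal A$ equipped with a map to $Q$ has a terminal object (see, e.g., \cite{LurieHTT}); the value of the adjoint at $Q$ is then that terminal object, and the counit is its structure map to $Q$. So the whole proof reduces to exhibiting, for each $Q\in\cC_{y/}\times_\cC\Fin_{/X}$, a terminal reduced object over $Q$, which will be $\red_y(Q)$.

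Fix $Q$, represented as in Notation \ref{nota:morGrS} by a graph cobordism $B\to G\ot A$ together with $Y\to W\ot X$ and $W=X\sqcup_A G$. I would construct $\red_y(Q)$ by modifying $G$ separately over each based-tree component $T$ of $G$, with attachment point $a_T\in A$ and set of $B$-vertices $S_T\subseteq B$. If $S_T=\emptyset$ I delete $T$ (adjusting $A$ accordingly): since $T$ is contractible and meets the rest of $W$ only at $a_T\in X$, this does not change the pushout $X\sqcup_A G$ up to canonical equivalence. If $|S_T|=k\ge2$ I replace the single point $a_T$ by $k$ new points $a_{T,b}$, $b\in S_T$, all lying over the image of $a_T$ in $X$, and I replace $T$ by the disjoint union of the $k$ edges joining $a_{T,b}$ to $b$. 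This yields a finite set $A'$ with a map $A'\to A$ over $X$, a graph cobordism $B\to G'\ot A'$ all of whose based-tree components carry exactly one $B$-vertex, a canonical equivalence $X\sqcup_{A'}G'\xrightarrow{\simeq}W$, and hence an object $\red_y(Q)\in(\cC_{y/}\times_\cC\Fin_{/X})_\red$ whose structure maps from $y$ are transported along this equivalence. The counit $\red_y(Q)\to Q$ is the idle morphism $(A'\to X)\to(A\to X)$ of Remark \ref{rem:idleshape} on targets, together with the map $G'\to G$ sending the new edge at $(T,b)$ to the unique embedded arc in $T$ from $a_T$ to $b$ and restricting to the identity elsewhere; the higher coherences promoting this to a genuine morphism of $\cC_{y/}\times_\cC\Fin_{/X}$ are supplied by the contractibility of each $T$ (any two maps of an interval into a contractible space agreeing on endpoints are canonically homotopic).

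The remaining — and, I expect, the only substantial — task is to show that $\red_y(Q)\to Q$ is terminal among reduced objects over $Q$, i.e. that any morphism $P\to Q$ with $P$ reduced factors through it by an essentially unique $P\to\red_y(Q)$ over $Q$. The crux is a structural statement about morphisms in $\cC_{y/}\times_\cC\Fin_{/X}$: if $P$ has underlying graph cobordism $B\to H\ot A_P$ with $X\sqcup_{A_P}H\xrightarrow{\simeq}W$, then the induced map $H\to G$ must carry each based-tree component of $H$ into a based-tree component of $G$. I expect this to be the main obstacle; it should follow from the idle condition: the equivalence $X\sqcup_{A_P}H\simeq X\sqcup_A G$, combined with the contractibility of a based tree of $H$ and the fact that it touches the rest of the pushout only at its attachment point, pins down its image in $G$, up to contractible choice, inside the "tree part" of $G$. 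Granting this, the factorization is assembled componentwise — a based-tree component of $H$ with $B$-vertex $b$ lying over $a\in A$ maps into the based tree $T$ of $G$ at $a$ containing $b$, hence (after the modification) through the new edge at $(T,b)$, which simultaneously forces the unique lift $A_P\to A'$; non-tree components lift along the identity part of $G'\to G$ — and essential uniqueness again reduces to the contractibility bookkeeping. (If convenient, one can split this into two successive coreflections: first collapsing free based trees, then splitting fat ones; the same contractibility argument handles each.) This produces the terminal object and hence the right adjoint $\red_y$.
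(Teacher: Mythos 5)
Your overall strategy matches the paper's: construct a reduced object over each $Q$ and show it is terminal among reduced objects over $Q$ (the paper phrases this as exhibiting a candidate $\bar z$ together with $\epsilon\colon\bar z\to z$ and checking that postcomposition by $\epsilon$ is an equivalence $\cD(\tilde z,\bar z)\xrightarrow{\simeq}\cD(\tilde z,z)$ for all reduced $\tilde z$, which is the same criterion). Your construction of $\red_y(Q)$ also agrees with the paper's up to equivalence in $\cS$: the paper replaces \emph{every} based-tree component $T$ of $G$, regardless of $|S_T|$, by the discrete set $S_T$ of its $B$-vertices, each a one-point based tree; your version keeps an edge (or, when $|S_T|=1$, the whole tree) for each $b\in S_T$, but since all these pieces are contractible the resulting objects of $\Gr_\cS$ are equivalent.

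The gap is in the structural claim you call the crux: it is \emph{not} true that $H\to G$ carries based-tree components of $H$ into based-tree components of $G$. The point you are missing is that the $\Fin_{/X}$-component $A_P\to A$ of a morphism in $\cD$ need not be injective, so several components of $H$ may become identified in $G=H\sqcup_{A_P}A$. Concretely, take $A=\{a\}$, $G$ a circle with vertex $a$ (so $G$ has no based trees), and a reduced $P$ with $A_P=\{a',a''\}$, $H=T'\sqcup\Gamma$, where $T'$ is a based tree at $a'$ with a single $B$-vertex $b$ and $\Gamma$ is a circle attached at $a''$; sending both $a',a''\mapsto a$ produces a morphism $P\to Q$, and the based tree $T'$ lands inside the unique, non-tree component of $G$. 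The correct statement, which the paper proves by a relative Euler-characteristic count, is the one going the other way: non-based-tree components of $\tilde G$ map into non-based-tree components of $G$, i.e.\ the preimage in $\tilde G$ of the based-tree locus $G_1$ consists of based trees. This is exactly what is needed: it yields the containment $B_1\subseteq\tilde B_1$, which produces the lift $A_P\to A'$ together with its essential uniqueness. As written, your componentwise assembly also silently presupposes that every based tree of $H$ lies over a based-tree attaching point of $G$, so it does not address the configuration in the counterexample above; the correct directional claim handles that case automatically (such a tree is simply absorbed into $G_2$ and contributes no choice to the lift of $A_P$).
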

\begin{proof}
We abbreviate $\cD=\cC_{y/}\times_\cC\Fin_{/X}$.
It suffices to construct for any object $z\in\cD$ a ``candidate right adjoint object'' $\bar z\in\cD_\red$ together with a morphism $\epsilon\colon\bar z\to z$, and check that postcomposition by $\epsilon$ induces an equivalence of morphism spaces $\cD(\tilde z,\bar z)\xrightarrow{\simeq}\cD(\tilde z,z)$ for all $\tilde z\in\cD_\red$. 

We represent $z$ as a diagram as in Notation \ref{nota:morGrS}, and split $G=G_1\sqcup G_2$, where $G_1$ consists of all based trees in $G$. We similarly split $B=B_1\sqcup B_2$ and $A=A_1\sqcup A_2$, where $B_1,A_1$ are the preimages of $G_1$ in $B,A$; note the equivalence $A_1\simeq G_1$. We may thus represent $z$ by the following diagram:
\[
\begin{tikzcd}[row sep=10pt]
B=B_1\sqcup B_2\ar[r]\ar[d]&A_1\sqcup G_2\ar[d]&A_1\sqcup A_2\ar[l]\ar[d]\ar[dl,phantom,"\urcorner"very near end]\\
Y\ar[r]&W&X\ar[l]
\end{tikzcd}
\]
We define $\bar z$ and $\epsilon$ using the following diagram, in which all maps are restricted from the above diagram for $z$, except for the left and the middle top horizontal maps, whose restriction to $B_1$ is declared to be the identity of $B_1$:
\[
\begin{tikzcd}[row sep=15pt,column sep=50pt]
B=B_1\sqcup B_2\ar[r]\ar[d]&B_1\sqcup G_2\ar[d]&B_1\sqcup A_2\ar[l]\ar[d,"(B_1\to A_1\to X)"pos=.2,"\sqcup(A_2\to X)"pos=.8]\ar[dl,phantom,"\urcorner"very near end]\ar[r,dashed,"(B_1\to A_1)\sqcup\Id_{A_2}"]&A_1\sqcup A_2\ar[d]\\
Y\ar[r]&W&X\ar[l]\ar[r,dashed,equal]&X.
\end{tikzcd}
\]
The left and middle square provide $\bar z$, and the decompositions of the top objects as disjoins unions are analogous to those above for $z$, and witness that $\bar z$ is reduced.
The $\Fin_{/X}$-coordinate of the morphism $\epsilon$ is given by the map $(B_1\to A_1)\sqcup\Id_{A_2}$ of finite sets over $X$, i.e. by the dashed square above. If we consider this as a morphism in $\Cospan(\Fun([1],\cS))$ and compose it with $\bar z$, the resulting cospan of arrows of spaces admits an evident identification with $z$, upgrading the above dashed square to a morphism $\epsilon$ in $\cD$ as desired.

We next fix $\tilde z\in\cD_\red$, and represent it as a diagram as in Notation \ref{nota:morGrS} in which we decorate with a tilde the data which is specific of $\tilde z$, and we decompose $B$, $\tilde G$ and $\tilde A$ according to based trees in $\tilde G$ just as above:
\[
\begin{tikzcd}[row sep=10pt,column sep=50pt]
B=\tilde B_1\sqcup \tilde B_2\ar[r]\ar[d]&\tilde G=\tilde B_1\sqcup \tilde G_2\ar[d]&\tilde A=\tilde B_1\sqcup \tilde A_2\ar[l]\ar[d]\ar[dl,phantom,"\urcorner"very near end]\\
Y\ar[r]&\tilde W&X\ar[l].
\end{tikzcd}
\]
The datum of a morphism $f\colon \tilde z\to z$ in $\cD$ is then the same as a natural transformation of commutative diagrams of spaces as follows, in which the restricted natural transformation on top gives $\tilde z$, and the one on bottom gives $z$:
\[
\begin{tikzcd}[column sep=12pt,row sep=10pt]
B=\tilde B_1\sqcup\tilde B_2\ar[r]\ar[d,equal]&\tilde G=\tilde B_1\sqcup\tilde G_2\ar[d,"f_G"']&\tilde A=\tilde B_1\sqcup\tilde A_2\ar[d]\ar[l]\ar[dl,phantom,"\urcorner"very near end]\\
B=B_1\sqcup B_2\ar[r]&G=A_1\sqcup G_2&A=A_1\sqcup A_2\ar[l]
\end{tikzcd}
\hspace{.5cm}\Rightarrow\hspace{.5cm}
\begin{tikzcd}[column sep=8pt,row sep=10pt]
Y\ar[r]\ar[d,equal]&\tilde W\ar[d,"\simeq"']&X\ar[l]\ar[dl,phantom,"\urcorner"very near end]\ar[d,equal]\\
Y\ar[r]&W&X.\ar[l]
\end{tikzcd}
\]
We now note that $\tilde G_2$ must be mapped to $G_2$ along $f_G$. To see this, let $\tilde\Gamma$ be a component of $\tilde G_2$, and let $\tilde\gamma$ be its preimage in $\tilde A_2$; then either $\tilde\gamma=\emptyset$, or $\chi(\tilde\Gamma,\tilde\gamma)<0$, because $\tilde\Gamma$ is not a based tree; the image of $\tilde\Gamma$ in $G$ will have the analogous property.

It follows that along the decompositions $B=\tilde B_1\sqcup\tilde B_2=B_1\sqcup B_2$ we have a containment $\tilde B_2\subseteq B_2$ and hence also $B_1\subseteq\tilde B_1$. From this, it follows that the above morphism $f$ factors essentially uniquely as a vertical composite as follows:
\[
\begin{tikzcd}[column sep=12pt,row sep=10pt]
B=\tilde B_1\sqcup\tilde B_2\ar[r]\ar[d,equal]&\tilde G=\tilde B_1\sqcup\tilde G_2\ar[d,"f_G"']&\tilde A=\tilde B_1\sqcup\tilde A_2\ar[d]\ar[l]\ar[dl,phantom,"\urcorner"very near end]\\
B=B_1\sqcup B_2\ar[d,equal]\ar[r]&\bar G=B_1\sqcup G_2\ar[d]&\bar A=B_1\sqcup A_2\ar[l]\ar[d]\ar[dl,phantom,"\urcorner"very near end]\\
B=B_1\sqcup B_2\ar[r]&G=A_1\sqcup G_2&A=A_1\sqcup A_2\ar[l]
\end{tikzcd}
\hspace{.5cm}\Rightarrow\hspace{.5cm}
\begin{tikzcd}[column sep=8pt,row sep=10pt]
Y\ar[r]\ar[d,equal]&\tilde W\ar[d,"\simeq"']&X\ar[l]\ar[dl,phantom,"\urcorner"very near end]\ar[d,equal]\\
Y\ar[r]\ar[d,equal]&W\ar[d,equal]&X\ar[d,equal]\ar[l]\ar[dl,phantom,"\urcorner"very near end]\\
Y\ar[r]&W&X.\ar[l]
\end{tikzcd}
\]
\end{proof}

\begin{proof}[Proof of Proposition \ref{prop:colimFinX}]
The first two requirements from Definition \ref{defn:leftcalculus} are immediate, so we shall focus on the third.
Given an idle morphism $y'\to y$ in $\cC$, we want to show that the induced functor $\cC_{y/}\times_\cC\Fin_{/X}\to\cC_{y'/}\times_\cC\Fin_{/X}$ gives an equivalence on classifying spaces.
Letting $y=(B\to Y)$ and $y'=(B'\to Y')$, the idle morphism induces an equivalence $Y'\simeq Y$ and can be regarded as a morphism $B'\to B$ in $\Fin_{/Y}$. Letting $y_\emptyset:=(\emptyset\to Y)$, we obtain a composition of idle morphisms $y_\emptyset\to y'\to y$, so that it suffices to prove that the two functors with target $\cC_{y_\emptyset/}\times_\cC\Fin_{/X}$ and with sources $\cC_{y/}\times_\cC\Fin_{/X}$ and $\cC_{y'/}\times_\cC\Fin_{/X}$, respectively, induce equivalences on classifying spaces. We thus reduce to checking that for any $y=(B\to Y)\in\Gr_\cS$, the idle morphism $y_\emptyset\to y$ induces an equivalence of spaces
\[
|\cC_{y/}\times_\cC\Fin_{/X}|\xrightarrow{\simeq}|\cC_{y_\emptyset/}\times_\cC\Fin_{/X}|.
\]
This is immediate if $B=\emptyset$; an evident induction argument allows us to reduce the above to checking instead that any idle morphism $y'\to y$ induces an equivalence of spaces $|\cC_{y/}\times_\cC\Fin_{/X}|\xrightarrow{\simeq}|\cC_{y'/}\times_\cC\Fin_{/X}|$, under the following assumptions on $y$ and $y'$: $y$ has the form $(B\to Y)$ for a nonempty finite set $B$, $B'=B\setminus\set{b}$ for some $b\in B$, $y'=(B'\to Y)$, and the idle morphism $y'\to y$ is given by the inclusion $B'\subset B$, considered as a map of spaces over $Y$.

Let $\cD=\cC_{y/}\times_\cC\Fin_{/X}$ and $\cD'=\cC_{y'/}\times_\cC\Fin_{/X}$.
By Lemma \ref{lem:redrightadjoint} the left and right arrow in the following composite functor induce equivalences on classifying spaces:
\[
\cD_\red\hto\cD\xrightarrow{\mathrm{idle}}\cD'\xrightarrow{\red_{y'}}\cD'_\red.
\]
To prove that the middle functor induces an equivalence on classifying spaces, it thus suffices to check that the composite does.
We now observe that the composite $\cD_\red\to\cD'_\red$ is a cocartesian fibration. For this, let $z\in\cD_\red$ be an object as in Notation \ref{nota:morGrS}, let $z'$ be its image in $\cD'$, and let $f'\colon z'\to \bar z'$ be a morphism in $\cD'$; we represent $f'$ as the following natural transformation:
\[
\begin{tikzcd}[column sep=8pt,row sep=10pt]
B'\ar[r]\ar[d,equal]&G'\ar[d]&A'\ar[d]\ar[l]\ar[dl,phantom,"\urcorner"very near end]\\
B'\ar[r]&\bar G'&\bar A'\ar[l]
\end{tikzcd}
\hspace{.5cm}\Rightarrow\hspace{.5cm}
\begin{tikzcd}[column sep=8pt,row sep=10pt]
Y\ar[r]\ar[d,equal]& W'\ar[d,"\simeq"']&X\ar[l]\ar[dl,phantom,"\urcorner"very near end]\ar[d,equal]\\
Y\ar[r]&\bar W'&X.\ar[l]
\end{tikzcd}
\]
There are now two cases for the descrption of $z'$ in terms of $z$.
\begin{enumerate}
\item If the map $B\to G$ sends $b$ to a based tree, then $z'$ is obtained from $z$ by discarding the mentioned based tree (together with its preimage in $A$ and its preimage $b\in B$); in particular we have $A\simeq A'\sqcup\set{b}$ and $G\simeq G'\sqcup\set{b}$; in this case we claim that the morphism $f\colon z\to\bar z$, given by the following natural transformation, is a cocartesian lift of $f'$:
\[
\begin{tikzcd}[column sep=8pt,row sep=10pt]
B=B'\sqcup\set{b}\ar[r]\ar[d,equal]&G=G'\sqcup\set{b}\ar[d]&A=A'\sqcup\set{b}\ar[d]\ar[l]\ar[dl,phantom,"\urcorner"very near end]\\
B=B'\sqcup\set{b}\ar[r]&\bar G:=\bar G'\sqcup\set{b}&\bar A:=\bar A'\sqcup\set{b}\ar[l]
\end{tikzcd}
\hspace{.4cm}\Rightarrow\hspace{.4cm}
\begin{tikzcd}[column sep=8pt,row sep=10pt]
Y\ar[r]\ar[d,equal]& W\simeq W'\ar[d,"\simeq"']&X\ar[l]\ar[dl,phantom,"\urcorner"very near end]\ar[d,equal]\\
Y\ar[r]&\bar W:=\bar W'&X.\ar[l]
\end{tikzcd}
\]
\item If instead the map $B\to G$ does not send $b$ to a based tree, then $z'$ is obtained from $z$ by discarding $b\in B$; in particular we have $A\simeq A'$ and $G\simeq G'$; in this case we claim that the morphism $f\colon z\to \bar z$, given by the following natural transformation, is a cocartesian lift of $f'$:
\[
\begin{tikzcd}[column sep=8pt,row sep=10pt]
B=B'\sqcup\set{b}\ar[r]\ar[d,equal]&G=G'\ar[d]&A=A'\ar[d]\ar[l]\ar[dl,phantom,"\urcorner"very near end]\\
B=B'\sqcup\set{b}\ar[r]&\bar G:=\bar G'&\bar A:=\bar A'\ar[l]
\end{tikzcd}
\hspace{.4cm}\Rightarrow\hspace{.4cm}
\begin{tikzcd}[column sep=8pt,row sep=10pt]
Y\ar[r]\ar[d,equal]& W\simeq W'\ar[d,"\simeq"']&X\ar[l]\ar[dl,phantom,"\urcorner"very near end]\ar[d,equal]\\
Y\ar[r]&\bar W:=\bar W'&X.\ar[l]
\end{tikzcd}
\]
\end{enumerate}
Now let $f''\colon z''\to\tilde z''$ be a morphism in $\cD'_\red$, and represent without loss of generality $\tilde z''$ by the following diagram on left, and $f''$ by the following diagram on right:
\[
\begin{tikzcd}[row sep=10pt]
B'\ar[d]\ar[r]&\tilde G''\ar[d]&\tilde A''\ar[l]\ar[d]\ar[dl,phantom,"\urcorner"very near end]\\
Y\ar[r]&W&X;\ar[l]
\end{tikzcd}
\hspace{1cm}
\begin{tikzcd}[row sep=10pt]
B'\ar[r]\ar[d]&G''\ar[d]&A''\ar[l]\ar[d]\ar[dl,phantom,"\urcorner"very near end]\ar[r,dashed]&\tilde A''\ar[d]\\
Y\ar[r]&W&X\ar[l]\ar[r,dashed,equal]&X.
\end{tikzcd}
\]
The proof of both claims is a routine exercise: for any $\tilde z\in\cD_\red$ with projection $\tilde z'\in\cD'_\red$ one can directly check the equivalence of morphism spaces $\cD_\red(\bar z,\tilde z)\simeq\cD_\red(z,\tilde z)\times_{\cD'_\red(z',\tilde z')}\cD'_\red(\bar z',\tilde z')$; in case (2) one has in fact to distinguish two subcases, whether the map $B\to\tilde G$ sends $b$ to a based tree or not.

To prove that the cocartesian fibration $\cD_\red\to\cD'_\red$ induces an equivalence on classifying spaces, it suffices now to show that each fibre has contractible classifying space. Let therefore $z'\in\cD'_\red$ be represented by the following diagram, in which as usual $B'_1$ is the set of based trees in  $G'$:
\[
\begin{tikzcd}[row sep=10pt,column sep=10pt]
B'_1\sqcup B'_2\ar[d]\ar[r]&B'_1\sqcup G'_2\ar[d]&B'_1\sqcup A'_2\ar[l]\ar[d]\ar[dl,phantom,"\urcorner"very near end]\\
Y\ar[r]&W&X;\ar[l]
\end{tikzcd}
\]
and let $\cD^{z'}_\red$ denote the fibre at $z'$ of the functor $\cD_\red\to\cD'_\red$.
We distinguish two types of objects $z\in\cD_\red^{z'}$, represented by the following diagrams; in few words, in type (1) we have $b\in B_1$, whereas in type (2) we have $b\in B_2$:
\[
(1)\hspace{.2cm}
\begin{tikzcd}[row sep=10pt,column sep=10pt]
B'_1\sqcup\set{b}\sqcup B'_2\ar[d]\ar[r]&B'_1\sqcup\set{b}\sqcup G'_2\ar[d]&B'_1\sqcup \set{b}\sqcup A'_2\ar[l]\ar[d]\ar[dl,phantom,"\urcorner"very near end]\\
Y\ar[r]&W&X;\ar[l]
\end{tikzcd}
\]
\[
(2)\hspace{.2cm}
\begin{tikzcd}[row sep=10pt,column sep=10pt]
B'_1\sqcup\set{b}\sqcup B'_2\ar[d]\ar[r]&B'_1\sqcup G'_2\ar[d]&B'_1\sqcup A'_2\ar[l]\ar[d]\ar[dl,phantom,"\urcorner"very near end]\\
Y\ar[r]&W&X;\ar[l]
\end{tikzcd}
\]
In type (1) we lift $z'$ to $z$ by adjoining a new based tree marked by $b$; in type (2) instead we use $b$ to mark a component of $G'$ directly, and in order for the result to be reduced, we must send $b$ to $G'_2$ in this case, i.e. not to a based tree of $G'$. The moduli space of objects of type (1) is therefore the space of lifts of the map $b\to W$ to a map $b\to X$, i.e. $\fib_b(X\to W)$, whereas the moduli space of objects of type (2) is the space of lifts of the map $b\to W$ to a map $b\to G'_2$, i.e. $\fib_b(G'_2\to W)$. All morphisms in $\cD_\red^{z'}$ are either equivalences or go from an objecct of type (1) to an object of type (2). More precisely, a morphism between the objects (1) and (2) represented by the diagrams above is the datum of:
\begin{itemize}
\item a retraction of finite sets $B'_1\sqcup\set{b}\sqcup A'_2\twoheadrightarrow B'_1\sqcup A'_2$, which is the same as a point $a\in B'_1\sqcup A'_2$;
\item a homotopy between the following two maps:
\begin{itemize}
\item the map $\set{b}\to B'_1\sqcup G'_2$, which is a datum of the object (2) and factors through $G'_2$;
\item the composite map $\set{b}\to B'_1\sqcup A'_2\to B'_1\sqcup G'_2$; 
\end{itemize}
in particular we must have $a\in A'_2$ for this homotopy to exist, and this homotopy is the same as a path in $G'_2$ from the image of $\set{a}\to G'_2$ to the image of the lift $\set{b}\to G'_2$, the latter being a datum of the object (2);
\item a homotopy between the following two maps:
\begin{itemize}
\item the map $\set{b}\to X$, which is a datum of the object (1);
\item the composite map $\set{b}\to A'_2\to X$;
\end{itemize}
this homotopy is the same as a path in $X$ from the image of $\set{a}\to X$ to the image of the lift $\set{b}\to X$.
\end{itemize}
The moduli space of all morphisms of $\cD_\red^{z'}$ going from type (1) to type (2), with arbitrary sources and targets, is therefore equivalent to the space of lifts of the map $b\to W$ to a map $b\to A'_2$, i.e. $\fib_b(A'_2\to W)$. All in all the classifying space $|\cD_\red^{z'}|$ is equivalent to the pushout of spaces $\fib_b(X\to W)\sqcup_{\fib_b(A'_2\to W)}\fib_b(G'_2\to W)$, which is equivalent to $\fib_b(X\sqcup_{A'_2}G'_2\xrightarrow{\simeq} W)$, which is contractible as desired.
\end{proof}

The following is a direct consequence of Proposition \ref{prop:colimFinX} and Theorem \ref{thm:Cisinski}.
\begin{cor}
\label{cor:colimFinX}
For $X,Y\in\cS$, the moduli space of graph cobordisms $\fM_\Gr(Y,X)$ is equivalent to the following classifying space of a left fibration over $\Fin_{/X}$, or equivalently, to the following colimit over $\Fin_{/X}$ of a diagram of spaces:
\[
\left|(\Gr_\cS)_{(\emptyset\to Y)/}\times_{\Gr_\cS}\Fin_{/X}\right|\simeq\colim_{(A\to X)\in\Fin_{/X}}\Gr_\cS((\emptyset\to Y),(A\to X)).
\]
\end{cor}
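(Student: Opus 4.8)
The plan is to obtain Corollary \ref{cor:colimFinX} by feeding Proposition \ref{prop:colimFinX} into Theorem \ref{thm:Cisinski} and then unwinding the resulting presheaf-level formula at the object $(\emptyset\to Y)$. First I would apply Proposition \ref{prop:colimFinX} to the object $x=(\emptyset\to X)\in\cC=\Gr_\cS$: it asserts precisely that the inclusion $\pi(x)\colon W(x)\to\cC_{x/}$ is a left calculus of fractions at $x$ for $(\cC,W)$, where $W\subseteq\cC$ is the subcategory of idle morphisms and $\GrCob=L(\cC,W)$. Theorem \ref{thm:Cisinski} then yields an equivalence in $\PSh(\cC)$
\[
\GrCob(-,x)=L(\cC,W)(-,x)\simeq(d_0\pi(x))_!(*).
\]
Evaluating at $y=(\emptyset\to Y)$ and invoking the identity recorded after Definition \ref{defn:leftcalculus}, namely $(d_0\pi(x))_!(*)(y)\simeq\big|\cC_{y/}\times_\cC W(x)\big|\simeq\colim_{z\in W(x)}\cC(y,d_0\pi(z))$, we get
\[
\fM_\Gr(Y,X)=\GrCob(\emptyset\to Y,\emptyset\to X)\simeq\big|\cC_{(\emptyset\to Y)/}\times_\cC W(x)\big|.
\]

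Next I would translate this into the two expressions of the statement. Since $\cC_{y/}\to\cC$ is a left fibration, the base change $\cC_{y/}\times_\cC W(x)\to W(x)$ is again a left fibration; identifying $W(x)\simeq W_{x/}\simeq\Fin_{/X}$ — using the $2$-of-$3$ property of $W$ from Remark \ref{rem:2of3} together with the description of idle morphisms from Remark \ref{rem:idleshape} (via Lemma \ref{lem:rpo}) — one sees that $\cC_{(\emptyset\to Y)/}\times_\cC W(x)\simeq(\Gr_\cS)_{(\emptyset\to Y)/}\times_{\Gr_\cS}\Fin_{/X}$ as a left fibration over $\Fin_{/X}$, which gives the left-hand side. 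On the other hand, by straightening this left fibration classifies an $\cS$-valued functor on $\Fin_{/X}$ whose classifying space is the colimit of that functor; and since under the identification $W(x)\simeq\Fin_{/X}$ the functor $d_0\pi(x)$ becomes the tautological inclusion $(A\to X)\mapsto(A\to X)\in\Gr_\cS$, the functor in question sends $(A\to X)$ to $\Gr_\cS((\emptyset\to Y),(A\to X))$, producing the right-hand colimit.

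All the substantive input has already been supplied by Proposition \ref{prop:colimFinX} and Theorem \ref{thm:Cisinski}, so I do not expect a genuine obstacle; the one point that deserves explicit care is the compatibility of the chain of equivalences $W(x)\simeq W_{x/}\simeq\Fin_{/X}$ with $d_0\pi(x)$, i.e.\ verifying that under these identifications $d_0\pi(x)$ really becomes the tautological functor $(A\to X)\mapsto(A\to X)$, so that the right-hand side of Theorem \ref{thm:Cisinski} is literally the diagram $(A\to X)\mapsto\Gr_\cS((\emptyset\to Y),(A\to X))$ appearing in the statement.
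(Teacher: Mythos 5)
Your proposal is correct and matches the paper's intended reasoning: the paper states that Corollary \ref{cor:colimFinX} is a direct consequence of Proposition \ref{prop:colimFinX} and Theorem \ref{thm:Cisinski}, and you have correctly unwound that deduction, including the identification $W(x)\simeq W_{x/}\simeq\Fin_{/X}$ furnished by Remark \ref{rem:2of3}. The one point you flag as deserving care — that $d_0\pi(x)$ becomes the tautological functor under these identifications — is indeed the only bookkeeping step, and it holds because the equivalence $W_{x/}\simeq\Fin_{/X}$ of Remark \ref{rem:2of3} is by construction compatible with the forgetful functors to $\cC$.
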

As consequences of Corollary \ref{cor:colimFinX}, we obtain the following further corollaries.
\begin{cor}
\label{cor:GrCobcore}
The functor $D_0\colon\GrCob\to\Cospan(\cS)$ induces an equivalence on core groupoids. In particular $\GrCob^\simeq\simeq\cS^\simeq$.
\end{cor}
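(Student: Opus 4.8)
The plan is to prove that $D_0$ induces an essentially surjective and fully faithful map of $\infty$-groupoids $\GrCob^\simeq\to\Cospan(\cS)^\simeq$, and then to invoke the standard identification $\Cospan(\cS)^\simeq\simeq\cS^\simeq$: a morphism of $\Cospan(\cS)$ is invertible exactly when both of its legs are equivalences, so the core of $\Cospan(\cS)$ is the image of the canonical inclusion $\cS\hookrightarrow\Cospan(\cS)$. This also yields the final clause $\GrCob^\simeq\simeq\cS^\simeq$. Essential surjectivity is immediate from Remark \ref{rem:idleshape}: every object of $\GrCob$ is joined by an idle morphism, hence by an equivalence, to one of the form $(\emptyset\to X)$, and $D_0$ sends $(\emptyset\to X)$ to $X$. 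So the content is full faithfulness, and since every object is equivalent to some $(\emptyset\to X)$ it suffices to show that for all $X,Y\in\cS$ the induced map
\[
\GrCob^\simeq\big((\emptyset\to Y),(\emptyset\to X)\big)\longrightarrow \Cospan(\cS)^\simeq(Y,X)\simeq\mathrm{Eq}_\cS(Y,X)
\]
onto the space of equivalences $Y\xrightarrow{\simeq}X$ is an equivalence.

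The first step is to identify the source as a classifying space. By Corollary \ref{cor:colimFinX} (that is, Proposition \ref{prop:colimFinX} combined with Theorem \ref{thm:Cisinski}) a point of $\fM_\Gr(Y,X)=\GrCob((\emptyset\to Y),(\emptyset\to X))$ is represented by an object of $\cC_{(\emptyset\to Y)/}\times_\cC\Fin_{/X}$, i.e. by a morphism $f\colon(\emptyset\to Y)\to(A\to X)$ in $\cC=\Gr_\cS$, the associated morphism of $\GrCob$ being $w^{-1}\circ f$ (the image of $f$ in $\GrCob$, postcomposed with the inverse of the canonical idle morphism $w\colon(\emptyset\to X)\to(A\to X)$). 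Since $w$ is invertible in $\GrCob$, this morphism is an equivalence if and only if the image of $f$ in $\GrCob$ is; and for $f$ a morphism of $\Gr_\cS$ the latter holds if and only if $f$ is idle, because $D_0$ factors through $\GrCob$ and, by definition, $f$ is idle precisely when $D_0(f)$ is an equivalence. As the classifying space description identifies any two objects lying in one connected component of $\cC_{(\emptyset\to Y)/}\times_\cC\Fin_{/X}$, idleness of $f$ is a property of the component, so $\GrCob^\simeq((\emptyset\to Y),(\emptyset\to X))$ is the classifying space $|\mathcal{E}|$ of the full subcategory $\mathcal{E}\subseteq\cC_{(\emptyset\to Y)/}\times_\cC\Fin_{/X}$ spanned by the idle morphisms $f$, with the comparison map to $\mathrm{Eq}_\cS(Y,X)$ induced by $D_0$.

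It remains to compute $|\mathcal{E}|$. By Remark \ref{rem:idleshape} (together with Lemma \ref{lem:rpo}) an object of $\mathcal{E}$ amounts to a finite set $A$, a map $A\to X$, a finite graph $G$ equipped with an equivalence $A\xrightarrow{\simeq}G$ (so $G$ is a disjoint union of based trees indexed by $A$), and an equivalence $Y\xrightarrow{\simeq}X\sqcup_A G$. For fixed $A$ the space of such pairs $(G,A\simeq G)$ is contractible, and then $X\to X\sqcup_A G$ is a canonical equivalence; transporting the last datum along it, I would show that the forgetful functor
\[
\mathcal{E}\longrightarrow\Fin_{/X}\times\mathrm{Eq}_\cS(Y,X),\qquad (A,\,A\to X,\,G,\,Y\simeq X\sqcup_A G)\longmapsto\big((A\to X),\,Y\simeq X\big),
\]
is an equivalence of $\infty$-categories, by checking that it is a cocartesian fibration with contractible fibres in the spirit of Lemmas \ref{lem:redrightadjoint} and \ref{lem:redcocartesian} and of the fibrewise contractibility arguments in the proof of Proposition \ref{prop:colimFinX}. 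Since $(\emptyset\to X)$ is an initial object of $\Fin_{/X}$, we have $|\Fin_{/X}|\simeq *$, whence $|\mathcal{E}|\simeq|\Fin_{/X}|\times\mathrm{Eq}_\cS(Y,X)\simeq\mathrm{Eq}_\cS(Y,X)$; this identification is compatible with $D_0$, which under it sends the class of $(A,A\to X,G,Y\simeq X\sqcup_A G)$ to the cospan $Y\xrightarrow{\simeq}X\sqcup_A G\xleftarrow{\simeq}X$, i.e. to the underlying equivalence $Y\simeq X$. This establishes full faithfulness and hence the corollary.

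The main obstacle I expect is the coherent bookkeeping in the middle paragraph: rigorously extracting from the calculus of fractions that the equivalences of $\GrCob$ between objects $(\emptyset\to Y)$ and $(\emptyset\to X)$ are exactly the components of $\fM_\Gr(Y,X)$ admitting an idle representative, so that $\GrCob^\simeq((\emptyset\to Y),(\emptyset\to X))$ genuinely equals $|\mathcal{E}|$ rather than merely a space with the same set of components; and, relatedly, checking that the transport along the canonical equivalence $X\simeq X\sqcup_A G$ is natural enough for the forgetful functor in the last paragraph to be a genuine equivalence of $\infty$-categories.
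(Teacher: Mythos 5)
Your proposal follows essentially the same route as the paper's proof. Both arguments handle essential surjectivity via $(\emptyset\to X)$, and then reduce full faithfulness to showing that the subspace of $\fM_\Gr(Y,X)$ consisting of morphisms invertible in $\GrCob$ is $\cS^\simeq(Y,X)$; both then identify that subspace with the classifying space of the full $\infty$-subcategory of $\cC_{y/}\times_\cC\Fin_{/X}$ spanned by idle objects (your $\mathcal{E}$ is exactly the paper's $W_{y/}\times_W W_{x/}$, by the two-out-of-three property of Remark~\ref{rem:2of3}). Your argument that this is genuinely a union of path-components of $\fM_\Gr(Y,X)$ — because idleness is a component-wise property and a full subcategory consisting of all objects in a union of components realizes to that union — is correct and in fact makes explicit a point the paper glosses over.

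The one place where the paper takes a genuine shortcut is the final computation of $|\mathcal{E}|$. Instead of unwinding $\mathcal{E}$ into the explicit data $(A,A\to X,G,A\simeq G,Y\simeq X\sqcup_AG)$ and then setting up a cocartesian-fibration argument over $\Fin_{/X}\times\cS^\simeq(Y,X)$ (your flagged concern about coherence of the transport along $X\simeq X\sqcup_AG$ is indeed the tricky part there), the paper computes the pullback $W_{y/}\times_W W_{x/}$ directly from the algebraic identifications $W_{y/}\simeq\Fin_{/Y}$, $W_{x/}\simeq\Fin_{/X}$ and $W\simeq\Fin\times_\cS\Fun([1],\cS)\times_\cS\cS^\simeq$ given in Remark~\ref{rem:2of3}, immediately obtaining $\Fin_{/Y}\times\cS^\simeq(Y,X)$ (equivalent to your $\Fin_{/X}\times\cS^\simeq(Y,X)$ since $Y\simeq X$ on the nonempty locus). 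This avoids any mention of graphs $G$, pushouts $X\sqcup_AG$, or cocartesian fibrations, and so is considerably lighter; it is worth adopting, though your route would also work. One small caution if you pursue the fibration approach: a cocartesian fibration with fibres contractible merely as spaces gives an equivalence only after passing to classifying spaces, which is all you need here, but does not by itself give an equivalence of $\infty$-categories, so be precise about which statement you prove.
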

\begin{proof}
Every object $X\in\Cospan(\cS)$ can be lifted to an object in $\GrCob$ along $D_0$, for instance $(\emptyset\to X)$: this shows that $D_0$ is essentially surjective. Given $Y,X\in\cS$, we consider the objects $x=(\emptyset\to X)$ and $y=(\emptyset\to Y)$ in $\cC:=\Gr_\cS$.
Our goal is to compute the subspace of $\fM_\Gr(Y,X)$ comprising morphisms $y\xrightarrow{\simeq} x$ that are equivalences in $\GrCob$. The equivalence $\left|\cC_{y/}\times_\cC W_{x/}\right|\simeq\fM_\Gr(Y,X)$ sends an object $y\to x'\ot x$ in the category $\cC_{y/}\times_\cC W_{x/}$ to the composite of the morphism $y\to x'$ and the formal inverse of the idle morphism $x\to x'$; if we expand without loss of generality $x'=(A\to X)$, this morphism in $\GrCob$ is further sent along $D_0$ to the morphism $D_0(y\to x')$.
A necessary condition for $y\to x'\ot x$ to be an invertible morphism in $\GrCob$ is therefore that $y\to x'$ be idle in $\cC$, and this is clearly also a sufficient condition. We thus obtain that the subspace of $\fM_\Gr(Y,X)$ comprising invertible morphisms in $\GrCob$ is the classifying space of the $\infty$-subcategory $W_{y/}\times_W W_{x/}\subseteq \cC_{y/}\times_\cC W_{x/}$. We may now identify $W_{y/}\simeq\Fin_{/Y}$, $W_{x/}\simeq\Fin_{/X}$, and $W\simeq \Fin\times_\cS\Fun([1],\cS)\times_\cS\cS^\simeq$ as in Remark \ref{rem:2of3}; this leads to the identification of $W_{y/}\times_W W_{x/}$ with the product of $\Fin_{/Y}$ and the space $\cS^\simeq(Y,X)$ of equivalences $Y\xrightarrow{\simeq}X$; using that $|\Fin_{/Y}|\simeq*$, we obtain the desired equivalence $|W_{y/}\times_WW_{x/}|\simeq\cS^\simeq(Y,X)$.
\end{proof}
\begin{cor}
\label{cor:cSinGrCob}
Consider $\cS$ as a wide $\infty$-subcategory of $\Cospan(\cS)$, spanned by cospans of the form $Y\to W\overset{\simeq}{\ot}X$; then $D_0$ restricts to an equivalence of $\infty$-categories
\[
D_0\colon D_0^{-1}(\cS)\xrightarrow{\simeq}\cS.
\]
\end{cor}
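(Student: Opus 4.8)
The plan is to run the argument of Corollary~\ref{cor:GrCobcore}, with the condition ``invertible in $\GrCob$'' replaced by ``carried by $D_0$ into $\cS\subseteq\Cospan(\cS)$''. Abbreviate $\cC=\Gr_\cS$, let $W\subseteq\cC$ be the wide subcategory of idle morphisms (so $\GrCob=L(\cC,W)$), and let $\cC^{\cS}\subseteq\cC$ be the wide subcategory spanned by those morphisms whose $D_0$-image has the form $Y\to W\overset{\simeq}{\ot}X$; note $W\subseteq\cC^{\cS}$. Essential surjectivity of $D_0\colon D_0^{-1}(\cS)\to\cS$ is immediate: by Corollary~\ref{cor:GrCobcore} the functor $D_0$ induces $\GrCob^\simeq\xrightarrow{\simeq}\Cospan(\cS)^\simeq=\cS^\simeq$, and $(D_0^{-1}(\cS))^\simeq=\GrCob^\simeq$ since every equivalence of $\Cospan(\cS)$ already lies in $\cS$. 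So it remains to check, for $x=(\emptyset\to X)$ and $y=(\emptyset\to Y)$, that the induced map $D_0^{-1}(\cS)(y,x)\to\cS(Y,X)$ is an equivalence. Here ``$X\to W$ is an equivalence'' is a condition on $\Cospan(\cS)(Y,X)=\cS_{(Y\sqcup X)/}$ that is invariant under equivalence, so $\cS(Y,X)\hookrightarrow\Cospan(\cS)(Y,X)$ is an inclusion of path components, $\cS(Y,X)\simeq\map_\cS(Y,X)$, and consequently $D_0^{-1}(\cS)(y,x)$ is the union of those path components of $\fM_\Gr(Y,X)=\GrCob(y,x)$ that $D_0$ sends into $\cS(Y,X)$.

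I would then invoke Corollary~\ref{cor:colimFinX}: under $|\cC_{y/}\times_\cC\Fin_{/X}|\simeq\fM_\Gr(Y,X)$, an object represented by $y\xrightarrow{\alpha}x'\xleftarrow{w}x$ (with $w$ the idle morphism encoded by the $\Fin_{/X}$-coordinate) maps to $D_0(w)^{-1}\circ D_0(\alpha)$ in $\Cospan(\cS)$; since $D_0(w)$ and its inverse lie in $\cS$, this lands in $\cS$ iff $\alpha\in\cC^{\cS}$. Next comes the structural input: a short direct computation shows that $\cS\subseteq\Cospan(\cS)$ satisfies the $2$-of-$3$ property (whenever one leg of a composite of cospans is in $\cS$, the relevant pushout collapses and the two-of-three statement reduces to two-of-three for equivalences of spaces), hence $\cC^{\cS}=D_0^{-1}(\cS)$ satisfies $2$-of-$3$ as well. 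Now in $\cC_{y/}\times_\cC\Fin_{/X}$, the $\cC$-component $\beta$ of any morphism fits into a commuting triangle under $x$ whose two structure legs are idle, hence in $\cC^{\cS}$, so by $2$-of-$3$ we get $\beta\in\cC^{\cS}$. Combining this with $2$-of-$3$ once more shows that the full subcategory $\mathcal{E}\subseteq\cC_{y/}\times_\cC\Fin_{/X}$ spanned by the objects with $\alpha\in\cC^{\cS}$ is closed under passing to the source or the target of an arbitrary morphism; therefore $|\mathcal{E}|\hookrightarrow|\cC_{y/}\times_\cC\Fin_{/X}|$ is the inclusion of exactly the components landing in $\cS(Y,X)$, i.e.\ $D_0^{-1}(\cS)(y,x)\simeq|\mathcal{E}|$.

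It remains to compute $|\mathcal{E}|$, and here I would use the reduction functor $\red_y$ of Lemma~\ref{lem:redrightadjoint}. Since $\alpha\in\cC^{\cS}$ forces $A\to G$ to be an equivalence (as in Remark~\ref{rem:idleshape}), the graph of an $\mathcal{E}$-object is a disjoint union of based trees, one per attached point; hence $\red_y$ carries $\mathcal{E}$ into $\mathcal{E}_{\red}:=\mathcal{E}\cap(\cC_{y/}\times_\cC\Fin_{/X})_{\red}$, and the adjunction of Lemma~\ref{lem:redrightadjoint} restricts to an adjunction $\mathcal{E}_{\red}\rightleftarrows\mathcal{E}$, whence $|\mathcal{E}|\simeq|\mathcal{E}_{\red}|$. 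But the underlying finite set of $y$ is empty, so a reduced object has no based trees at all; combined with the previous sentence this forces the graph of an object of $\mathcal{E}_{\red}$ to be empty. Thus $\mathcal{E}_{\red}$ is equivalent to the $\infty$-groupoid whose objects are the graph cobordisms $[\emptyset\to\emptyset\ot\emptyset;\ Y\xrightarrow{f}X\xleftarrow{=}X]$, one for each $f\in\map_\cS(Y,X)$, with no higher morphisms beyond those of $\map_\cS(Y,X)$ (the $\Fin_{/X}$-coordinate being pinned to the initial object $\emptyset\to X$), so $\mathcal{E}_{\red}\simeq\map_\cS(Y,X)$ and $|\mathcal{E}|\simeq\map_\cS(Y,X)$. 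Tracing the chain of equivalences, the composite $D_0^{-1}(\cS)(y,x)\simeq|\mathcal{E}|\simeq\map_\cS(Y,X)\simeq\cS(Y,X)$ is the map induced by $D_0$, which finishes the proof. I expect the main obstacle to be the second paragraph: correctly identifying the relevant union of path components of the colimit model $|\cC_{y/}\times_\cC\Fin_{/X}|$ with the classifying space of a genuine full subcategory, which requires combining the $2$-of-$3$ property of $\cS$ inside $\Cospan(\cS)$ with the fact that every morphism of the comma category has a ``structurally idle'' underlying $\cC$-morphism.
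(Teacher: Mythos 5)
Your decomposition---reduce to the full $\infty$-subcategory $\mathcal{E}\subseteq\cC_{y/}\times_\cC\Fin_{/X}$ of objects with $\cC$-component $\alpha$ in $\cC^\cS := D_0^{-1}(\cS)$, pass to reduced objects via Lemma~\ref{lem:redrightadjoint}, observe that $\alpha\in\cC^\cS$ forces the graph $G$ to consist of based trees and that $B=\emptyset$ then forces reduced objects of $\mathcal{E}$ to have $G=A=\emptyset$---is exactly the paper's proof. However, the claimed $2$-of-$3$ property for $\cS\subseteq\Cospan(\cS)$ is false, so the step you yourself flag as the likely obstacle does not go through as written. A homotopy pushout of a non-equivalence $Y\to W'$ along a non-equivalence $Y\to W$ can produce an equivalence $W\to W\sqcup_Y W'$: take $G=BS(1,2)=\langle a,b\mid bab^{-1}=a^2\rangle$, so that the loop $b\colon S^1\to \bB G$ is an integral homology isomorphism with $b$ normally generating $G$; the cofiber $\bB G/b$ is then simply connected and acyclic, hence contractible. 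So the composite of the cospan $\bB G\xrightarrow{=}\bB G\xleftarrow{b}S^1$ (not in $\cS$) with $S^1\to *\xleftarrow{=}*$ (in $\cS$) is $\bB G\to \bB G/b\xleftarrow{\simeq}*$ (in $\cS$), and $2$-of-$3$ fails. Consequently the derived claim ``hence $\cC^\cS=D_0^{-1}(\cS)$ satisfies $2$-of-$3$'' does not follow from your argument (it happens to be true for the constrained finite graphs occurring in $\Gr_\cS$, but for a separate reason your argument does not supply).

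The repair is small and you already have all the pieces. The $\cC$-component $\beta$ of a morphism of $\cC_{y/}\times_\cC\Fin_{/X}$ is not merely in $\cC^\cS$: it lies in the wide subcategory $W$ of idle morphisms, because the $\Fin_{/X}$-coordinate is carried into $W_{x/}$ by Remark~\ref{rem:2of3}, and $W$, being $D_0^{-1}(\text{equivalences})$, genuinely does satisfy $2$-of-$3$. Hence $D_0(\beta)$ is an equivalence of $\Cospan(\cS)$, so invertible, and closure of $\mathcal{E}$ under passage to sources and targets needs only the two trivial facts that $\cS$ contains every equivalence of $\Cospan(\cS)$ and is closed under composition: if $\alpha'=\beta\alpha$ with $D_0(\beta)$ an equivalence, then $D_0(\alpha)\in\cS$ if and only if $D_0(\alpha')\in\cS$. (More simply still: $\cS(Y,X)$ is a union of path components of $\Cospan(\cS)(Y,X)$, its preimage under any map of spaces is again such, and the classifying space functor takes the resulting coproduct decomposition of $\cC_{y/}\times_\cC\Fin_{/X}$ to a coproduct decomposition of spaces---a point the paper's proof also leaves implicit.) With this fix your argument and the paper's coincide.
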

\begin{proof}
By Corollary \ref{cor:GrCobcore} we have that $D_0^{-1}(\cS)\to\cS$ is essentially surjective. To prove it is fully faithful, fix $Y,X\in\cS$ and let $y=(\emptyset\to Y)$ and $x=(\emptyset\to X)$ be the corresponding objects in $\cC:=\Gr_\cS$. The preimage of $\cS(Y,X)\subset\Cospan(\cS)(Y,X)$ along $D_0$ is the subspace of $\fM_\Gr(Y,X)$ obtained as classifying space of the full $\infty$-subcategory $(\cC_{y/}\times_\cC W_{x/})'\subset\cC_{y/}\times_\cC W_{x/}$ spanned by objects $y\to x'\ot x$ such that $D_0(y\to x')$ is a morphism in $\cS\subset\Cospan(\cS)$; assuming without loss of generality that $x'$ takes the form $(A\to X)$, the requirement is that the morphism $y\to x'$ in $\cC$ be of the form
\[
\begin{tikzcd}[row sep=10pt]
\emptyset\ar[r]\ar[d]&G\ar[d]&A\ar[l,"\simeq"]\ar[d]\ar[dl,phantom,"\urcorner"very near end]\\
Y\ar[r]&W&X.\ar[l,"\simeq"]
\end{tikzcd}
\]
By Lemma \ref{lem:redrightadjoint} we have an equivalence $|(\cC_{y/}\times_\cC W_{x/})'|\simeq|(\cC_{y/}\times_\cC W_{x/})'_\red|$, where the index ``$\red$'' takes the full $\infty$-subcategory spanned by reduced objects. We now observe that all components of $G$ in an object as above are based trees, so the condition of being reduced forces $G=A=\emptyset$. It follows that $(\cC_{y/}\times_\cC W_{x/})'_\red$ is already a space, and as such it is equivalent to $\cS(Y,X)$ as desired.
\end{proof}
\begin{cor}
\label{cor:GrinGrCob}
The full $\infty$-subcategory of $\GrCob$ spanned by finite sets is equivalent to $\Gr$.
\end{cor}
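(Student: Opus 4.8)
\medskip

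The plan is to show that the canonical functor $D_0$ restricts to a symmetric monoidal equivalence from the full symmetric monoidal $\infty$-subcategory $\GrCob_\Fin\subseteq\GrCob$ spanned by spaces homotopy equivalent to a finite set onto $\Gr$ (this subcategory is symmetric monoidal since $\Fin\subseteq\cS$ is closed under $\sqcup$). By Corollary \ref{cor:GrCobcore} the functor $D_0$ is the identity on groupoid cores, so it is essentially surjective onto the finite-set objects, which are exactly the objects of $\Gr$. Since a symmetric monoidal functor which is an equivalence of underlying $\infty$-categories is automatically a symmetric monoidal equivalence, it therefore suffices to prove that for all finite sets $X,Y$ the map $D_0\colon\fM_\Gr(Y,X)\to\Cospan(\cS)(Y,X)$ is a subspace inclusion whose image is the subspace $\Gr(Y,X)$: this simultaneously shows that $D_0$ corestricts to $\Gr$ and that the corestriction is fully faithful.

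First I would reduce this morphism-space statement to a fibrewise computation. Fix finite sets $X,Y$, set $\cC:=\Gr_\cS$, $x:=(\emptyset\to X)$ and $y:=(\emptyset\to Y)$. Corollary \ref{cor:colimFinX} together with Lemma \ref{lem:redrightadjoint} (applied with $B=\emptyset$) gives equivalences
\[
\fM_\Gr(Y,X)\;\simeq\;\bigl|\cC_{y/}\times_\cC\Fin_{/X}\bigr|\;\simeq\;\bigl|(\cC_{y/}\times_\cC\Fin_{/X})_\red\bigr|,
\]
and, unwinding the identification used in the proof of Corollary \ref{cor:GrCobcore}, one checks that under these equivalences $D_0$ becomes the functor
\[
\Phi\colon(\cC_{y/}\times_\cC\Fin_{/X})_\red\longrightarrow\Cospan(\cS)(Y,X)
\]
sending a reduced diagram as in Notation \ref{nota:morGrS} (with $B=\emptyset$) to the cospan $Y\to X\sqcup_A G\ot X$. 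Since $X$ is a finite set, $X\sqcup_A G$ is obtained from $X$ by attaching finitely many cells of dimension $0$ and $1$ — replace $A\to G$ by a cofibration into a finite $1$-dimensional CW complex — hence is homotopy equivalent to a finite graph; so $\Phi$ factors through the subspace $\Gr(Y,X)\subseteq\Cospan(\cS)(Y,X)$, and it is surjective on $\pi_0$ onto $\Gr(Y,X)$ (given a finite graph $W$ with maps $X\to W$ and $Y\to W$, pass to the mapping cylinder of $X\to W$ to exhibit $W$ as built from $X$ by $0$- and $1$-cells, and take $G$ to be those cells).

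Since $\Gr(Y,X)$ is an $\infty$-groupoid, the classifying spaces of the homotopy fibres of $\Phi$ assemble to $\bigl|(\cC_{y/}\times_\cC\Fin_{/X})_\red\bigr|$, so it remains to prove that each homotopy fibre of $\Phi$ over a cospan $Y\to W\ot X$ has contractible classifying space. This fibre is the $\infty$-category of \emph{reduced decompositions of $W$ over $X$}: tuples $(A,\ A\to X,\ G,\ A\to G)$ with $G$ a finite graph having no based trees, equipped with a coherent identification $X\sqcup_A G\simeq W$ over $X$ and compatible with $Y\to W$. I would prove contractibility by an explicit analysis parallel to the final part of the proof of Proposition \ref{prop:colimFinX}: stratify by the finite combinatorial type of the map $G\to W$; use the ``delete the based trees'' operation from Lemma \ref{lem:redrightadjoint} and the ``mark a point of $G$ or of $X$'' operation from Lemma \ref{lem:redcocartesian} to identify the classifying space of the fibre with a homotopy colimit of spaces of lifts of maps $b\to W$ — spaces of the form $\fib_b(X\to W)$, $\fib_b(G\to W)$ and the like — which assembles to $\fib_b(W\xrightarrow{\ \simeq\ }W)$ and is therefore contractible. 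This fibrewise contractibility is the main obstacle; granting it, $\Phi$ induces an equivalence on classifying spaces onto $\Gr(Y,X)$, and the reduction of the first paragraph concludes.

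Finally, I would note an alternative route: granting Theorem \ref{thm:B}(2) — whose proof uses Proposition \ref{prop:GuirardelLevitt} only for aspherical targets, and finite sets are aspherical — the morphism-space statement is immediate, since for $X,Y$ finite Theorem \ref{thm:B}(2) identifies $\fM_\Gr(Y,X)$ with the subspace of $\Cospan(\cS)(Y,X)$ on cospans $Y\to W\ot X$ with $W$ obtained from $X$ by attaching finitely many cells of dimension $\le1$, which for $X$ a finite set is exactly $\Gr(Y,X)$.
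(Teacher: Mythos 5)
Your approach misses the one-line observation that makes this corollary essentially free, and both of your proposed routes have problems.

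The paper's proof is much shorter: since $X$ is a finite set, the index category $\Fin_{/X}$ in Corollary~\ref{cor:colimFinX} has a terminal object $\Id_X\colon X\to X$, so the colimit
\[
\fM_\Gr(Y,X)\simeq\colim_{(A\to X)\in\Fin_{/X}}\Gr_\cS\bigl((\emptyset\to Y),(A\to X)\bigr)
\]
collapses to the value at the terminal object, namely $\Gr_\cS\bigl((\emptyset\to Y),(X=X)\bigr)$, and unwinding Notation~\ref{nota:morGrS} this is $\Gr(Y,X)$ on the nose. No reduction to reduced objects and no fibrewise contractibility is needed.

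Your first route has a genuine gap at exactly the step you flag as ``the main obstacle'': you assert that the fibres of $\Phi$ over points of $\Gr(Y,X)$ have contractible classifying space and sketch that this ``should'' go like the end of the proof of Proposition~\ref{prop:colimFinX}, but that proof establishes a different thing (that idle morphisms yield equivalences), not the fibrewise contractibility you need; what you actually need here is the content of Proposition~\ref{prop:GuirardelLevitt} specialised to finite sets, and you do not supply it.

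Your second route is circular. Theorem~\ref{thm:B}(2) rests on Proposition~\ref{prop:GuirardelLevitt}, and the proof of Proposition~\ref{prop:GuirardelLevitt} invokes Corollary~\ref{cor:GrinGrCob} (explicitly: ``if however this happens for all $0\le i\le p$, then the equivalence \dots follows already from Corollary~\ref{cor:GrinGrCob}''). So you cannot use Theorem~\ref{thm:B}(2) to prove this corollary; Corollary~\ref{cor:GrinGrCob} is an input to, not a consequence of, the aspherical-target analysis.
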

\begin{proof}
Denoting $\GrCob_\Fin$ such full $\infty$-subcategory, the functor $D_0\colon\GrCob\to\Cospan(\cS)$ restricts to a functor $\GrCob_\Fin\to\Gr\subset\Cospan(\cS)$. We claim that $D_0\colon \GrCob_\Fin\to\Gr$ is an equivalence; since it is essentially surjective, it suffices to prove that it induces equivalences on morphism spaces. For $X\in\Fin$, the $\infty$-category $\Fin_{/X}$ has a terminal object $\Id_X\colon X\to X$, whence for $Y\in\Fin$ the colimit from Corollary \ref{cor:colimFinX} is equivalent to the space $\Gr_\cS((\emptyset\to Y),(X=X))$, and the latter can be readily identified with $\Gr(Y,X)$, precisely along $D_0\colon\Gr_\cS\to\Cospan(\cS)$.
\end{proof}

\begin{rem}
\label{rem:wlogYempty}
Another immediate consequence of Corollary \ref{cor:colimFinX} is that for all spaces $Y,X\in\cS$ we have a composite pullback of spaces as in the following diagram, in which the left square is induced by the functor $D_0$ and by precomposition with the graph cobordism $\emptyset\to Y=Y$, whereas the right square is given by forgetting the structure maps from $X$ and from $Y$:
\[
\begin{tikzcd}[row sep=10pt]
\fM_\Gr(Y,X)\ar[d]\ar[r,"D_0"]\ar[dr,phantom,"\lrcorner"very near start]&(\cS_{Y\sqcup X/})^\simeq\ar[d]\ar[r]\ar[dr,phantom,"\lrcorner"very near start]&(\cS_{Y/})^\simeq\ar[d]\\
\fM_\Gr(\emptyset,X)\ar[r,"D_0"]&(\cS_{X/})^\simeq\ar[r]&\cS^\simeq.
\end{tikzcd}
\]
In particular, if the bottom map $D_0$ is a subspace inclusion, then so is the top map $D_0$. This will allow us to reduce to the case $Y=\emptyset$ in the proof of Proposition \ref{prop:GuirardelLevitt}.
\end{rem}

\begin{rem}
The colimit formula for $\fM_\Gr(Y,X)$ in Corollary \ref{cor:colimFinX} captures the essence of Definition \ref{defn:graphcob}, as we . Even if we do not attach a graph directly to $X$, but we still attach it to a finite set mapping to $X$, we let this finite set vary in the $\infty$-category $\Fin_{/X}$; as this is a filtered $\infty$-category, the actual choice of the finite set over $X$ along which we attach the graph should be expected to be inessential. On the other hand, the colimit formula for $\fM_\Gr(Y,X)$ involves no specific relative cell structure on a pair $(G,A)$, but just the mere existence of such a cell structure (which boils down to the property of $G$ of being equivalent to a finite graph); in the light of \cite[Theorem 8.4]{Bianchi:graphcobset}, if we understand relative cell structures on $(G,A)$ as a datum to be given up to tree collapse maps, then also the choice of relative cell structures should be expected to be inessential. We refrain from further manipulating the colimit formula for $\fM_\Gr(Y,X)$ in Corollary \ref{cor:colimFinX} for general $Y,X\in\cS$, though in the next subsection we will expand on the previous remarks when $Y=\emptyset$ and $X$ is aspherical.
\end{rem}

We next present an example showing that
the functor $D_0\colon\GrCob\to\Cospan(\cS)$ does not induce an inclusion at the level of all morphism spaces.
\begin{ex}
\label{ex:Hatcher}
Let $Y=\emptyset$ and let $X\in\cS$ be a simply connected space. The space $\fM_\Gr(Y,X)$ contains a subspace $\fM_\Gr(Y,X)_1$ comprising graph cobordisms as in Notation \ref{nota:morGrS} such that $W\simeq X\vee S^1$. We abbreviate $\cC=\Gr_\cS$ and let $y=(B\to Y)=(\emptyset\to\emptyset)\in\cC$. By Lemma \ref{lem:redrightadjoint} and Proposition \ref{prop:colimFinX} we may compute $\fM_\Gr(Y,X)_1$ as the classifying space of the full $\infty$-subcategory $\cD$ of $(\cC_{y/}\times_\cC\Fin_{/X})_\red$ spanned by reduced objects as in Notation \ref{nota:morGrS} such that $W\simeq X\vee S^1$. There are two types of objects in $\cD$, according to the size of $A$:
\begin{enumerate}
\item $A\simeq\ul2$ and $G\simeq*$; the moduli space of such objects is equivalent to $X^2/C_2$;
\item $A\simeq*$ and $G\simeq S^1$; the moduli space of such objects is equivalent to $X\times \bB C_2$, i.e. the quotient $X/C_2$ for the trivial $C_2$-action.
\end{enumerate}
All morphisms in $\cD$ are either equivalences or go from objects of type (1) to objects of type (2); the moduli space of morphisms of the latter type is also equivalent to $X\times \bB C_2$, with source map given by the diagonal map $X/C_2\to X^2/C_2$, and target given by the identity of $X/C_2$; we thus obtain an equivalence $\fM_\Gr(Y,X)_1\simeq X^2/C_2$. Passing to loop spaces, we obtain in particular that $\Omega\fM_\Gr(Y,X)_1$ is equivalent to $\coprod_2(\Omega X)^2$, i.e. the disjoint union of two copies of the cartesian square of $\Omega X$.

We next consider the analogous subspace $\Cospan(\cS)(Y,X)_1\subset\Cospan(\cS)(Y,X)$ spanned by cospans $Y\to W\ot X$ such that $W\simeq X\vee S^1$ under $X$; this is a connected subspace, and its loop space is equivalent to $\hAut(X\vee S^1,X)$, the space of homotopy automorphisms of $X\vee S^1$ restricting to the identity on $X$. The space of self maps of $X\vee S^1$ fixing $X$ is equivalent to $\Omega(X\vee S^1)$, and its subspace of homotopy automorphisms corresponds to the union $\Omega_{\pm1}(X\vee S^1)$ of two connected components of $\Omega(X\vee S^1)$, yielding the two possible generators of $\pi_1(X\vee S^1)\cong\Z$. 

The map of spaces $\fM_\Gr(Y,X)\to\Cospan(\cS)(Y,X)$ induced by $D_0$ restricts to a map of connected subspaces $\fM_\Gr(Y,X)_1\to\Cospan(\cS)(Y,X)_1$. Passing to loop spaces we obtain a map $f\colon \coprod_2(\Omega X)^2\to\Omega_{\pm1}(X\vee S^1)$; we have that $f$ is a map of $E_1$-algebras for suitable $E_1$-algebra structures on the source and target; since both spaces consist of two components and since the map is a bijection on $\pi_0$, it suffices to study whether $f$ is an equivalence on one component; we henceforth study the restricted map $f\colon(\Omega X)^2\to\Omega_1(X\vee S^1)$, which we consider as a plain map of spaces.

Let us now specify $X=S^{k+1}$ for some $k\ge1$; then $\Omega_1(S^{k+1}\vee S^1)\simeq\Omega(\bigvee_\Z S^{k+1})$ is the underlying space of the free $E_1$-algebra in spaces generated by the pointed space $\bigvee_\Z S^k$, with basepoint serving as unit; similarly, $\Omega S^{k+1}$ is the underlying space of the free $E_1$-algebra generated by the pointed sphere $S^k$. We stress that $f\colon(\Omega S^{k+1})^2\to\Omega(\bigvee_\Z S^{k+1})$ is not a map of $E_1$-algebras when considering these free $E_1$-algebra structures and products thereof; instead, $f$ agrees under suitable identifications with the following composite map of spaces:
\[
\begin{tikzcd}[column sep=50pt]
E_1[S^k]\times E_1[S^k]\ar[r,"i_0\times i_1"]&E_1[\bigvee_\Z S^k]\times E_1[\bigvee_\Z S^k]\ar[r,"(-)\cdot(-)^{-1}"]&E_1[\bigvee_\Z S^k],
\end{tikzcd}
\]
where $i_0,i_1\colon S^k\to\bigvee_\Z S^k$ are two ``consecutive'' inclusions, and $(-)\cdot(-)^{-1}$ denotes multiplication of the first element with the inverse of the second in the group-like $E_1$-algebra $E_1[\bigvee_\Z S^k]$. We then observe that $f$ factors through $E_1[ S^k\vee S^k]$, whose homology is much smaller than $E_1[\bigvee_\Z S^k]$: to wit, $H_k(E_1[ S^k\vee S^k])\cong\Z^2$ whereas $H_k(E_1[\bigvee_\Z S^k])\cong\bigoplus_\Z\Z$. We conclude that the map of spaces $\fM_\Gr(Y,X)\to\Cospan(\cS)(Y,X)$ induced by $D_0$ is not an equivalence for $Y=\emptyset$ and $X=S^{k+1}$.

We invite the interested reader to generalise the above argument to the case of an arbitrary space connected space $X$ which is not aspherical. 
\end{ex}
Example \ref{ex:Hatcher} also shows that the conjecture from \cite{HatcherBanff} is wrong in general: for a space $X$ which is not aspherical, the canonical map from the classifying space of the category $G_1(X)$ from loc.cit. to $\bB\hAut(X\vee S^1,X)$ is not an equivalence. The identification $|G_1(X)|\simeq\fM_\Gr(\emptyset,X)_1$ will be discussed in the next subsection.

\subsection{A formula via tree collapse maps}
In this subsection, for $X\in\cS$, we give an alternative formula for the space $\fM_\Gr(\emptyset,X)$, in terms of the classifying space of an $\infty$-category $\bGr(X)$ of combinatorial graphs attached to $X$, with morphisms given by tree collapse maps. If $X$ is an actual topological space, the $\infty$-category $\bGr(X)$ has been constructed as a category internal to topological spaces in \cite{HatcherBanff} under the name ``$G(X)$''. The main application of this alternative formula will be Proposition \ref{prop:GuirardelLevitt}, which will identify the space $\fM_\Gr(\emptyset,X)$ as a subspace of $(\cS_{X/})^\simeq$; this will crucially rely on work of Guirardel--Levitt \cite{GuirardelLevitt}.

The main input towards the definition of $\bGr(X)$ is a variation of the (plain) category $\Gaf$ of ``marked graphs attached to a finite set'' introduced in \cite[Definition 3.6]{Bianchi:graphcobset}: we simplify the description by removing the finite set ``$B$'' giving the marking, and by extending the setting to an equivariant one: this extra generality will be needed later in the proof of Proposition \ref{prop:GuirardelLevitt}. We fix therefore a discrete group $\Gamma$ in the following; when $\Gamma$ is the trivial group we recover the theory from loc.cit., specialised to the case in which the finite set ``$B$'' giving the marking is empty.

\begin{nota}
We denote by $\Gamma\Fin$ the full subcategory of $\Fun(\bB\Gamma,\mathrm{Set})$ spanned by sets  admitting a $\Gamma$-action with finitely many orbits; we refer to an object in $\Gamma\Fin$ as a \emph{finite $\Gamma$-set}, even though the underlying set need not be finite. We further denote by $\Gamma\Fin_\free$ the full subcategory of $\Gamma\Fin$ spanned by sets with a free $\Gamma$-action, which we refer to as \emph{finite free $\Gamma$-sets}.
We further abbreviate $\Gamma\cS:=\Fun(\bB\Gamma,\cS)$, and regard $\Gamma\Fin_\free\subset\Gamma\Fin$ as nested full $\infty$-subcategories of $\Gamma\cS$; we refer to an object in $\Gamma\cS$ as a \emph{$\Gamma$-space}.
\end{nota}

\begin{defn}
\label{defn:Gammagaf}
A \emph{$\Gamma$-graph attached to a finite $\Gamma$-set}, shortly $\Gamma$-gaf, is a sequence $G=(A,V,H,\sigma,\upsilon)$ consisting of a finite $\Gamma$-set $A$, two finite free $\Gamma$-sets $V$ and $H$, and two $\Gamma$-equivariant maps $\sigma,\upsilon\colon A\sqcup V\sqcup H\to A\sqcup V\sqcup H$, satisfying the following requirements:
\begin{itemize}
\item $\sigma$ restricts to a map $H\to A\sqcup V$;
\item $\upsilon$ restricts to an involution of $H$ without fixpoints, and the residual action of $\Gamma$ on the quotient $E:=H/\upsilon$ is again free;
\item $\sigma$ and $\upsilon$ restrict to the identity on each of the subsets $A$ and $V$.
\end{itemize}
We say that $G$ is a $\Gamma$-gaf attached to $A$. A $\Gamma$-subgaf of $G$ is obtained by selecting $\Gamma$-subsets $A',V',H'$ of $A,V,H$, respectively, such that $\sigma$ and $\upsilon$ restrict to self maps of $A'\sqcup V'\sqcup H'$.
A $\Gamma$-gaf $G$ as above is a \emph{based (respectively, unbased) $\Gamma$-tree} if $A$ consists of a single $\Gamma$-orbit (respectively, $A=\emptyset$) and if the pushout of $\Gamma$-spaces $E\sqcup_H(A\sqcup V)$ is $\Gamma$-equivariantly homotopy equivalent to the set $A$ along the inclusion (respectively, to the set $\Gamma$ along some $\Gamma$-equivariant map).

A morphisms of $\Gamma$-gafs $G=(A,V,H,\sigma,\upsilon)\to G'=(A',V',H',\sigma',\upsilon')$ is a $\Gamma$-equivariant map $f\colon A\sqcup V\sqcup H\to A'\sqcup V'\sqcup H'$ satisfying the following requirements:
\begin{itemize}
\item $f$ restricts to a map $A\to A'$;
\item $f\sigma=\sigma f$ and $f\upsilon=\upsilon f$;
\item for $v'\in V'$, $f^{-1}(\Gamma v')$ is an unbased $\Gamma$-subtree of $G$;
\item for $a'\in A'$, $f^{-1}(\Gamma a')$ is a disjoint union of based $\Gamma$-subtrees of $G$, one for each $\Gamma$-orbit in $f^{-1}(\Gamma a')\cap A$;
\item for $h'\in H'$, $f^{-1}(h')$ is a singleton contained in $H$.
\end{itemize}
We denote by $\Gaf^\Gamma_\emptyset$ the category of $\Gamma$-gafs an morphisms of $\Gamma$-gafs. The index ``$\emptyset$'' only serves to warn the reader that we are working without markings, differently as in \cite{Bianchi:graphcobset}. We further denote by $\fA^\Gamma$ and $\AVH^\Gamma$ the evident functors $\Gaf^\Gamma_\emptyset\to\Gamma\Fin$ sending $G\mapsto A$ and $G\mapsto A\sqcup V\sqcup H$, respectively, and by $\alpha^\Gamma\colon\fA\to\AVH$ the natural transformation evaluating at $G$ to the inclusion $A\hto A\sqcup V\sqcup H$. We let $\Gaf_\emptyset^{\Gamma,\free}$ denote the full subcategory of $\Gaf_\emptyset^\Gamma$ given by the preimage $(\fA^\Gamma)^{-1}(\Gamma\Fin_\free)$, and also denote by $\alpha^\Gamma\colon\fA\to\AVH$ the restricted natural transformation of functors $\Gaf_\emptyset^{\Gamma,\free}\to\Gamma\Fin_\free$.
When $\Gamma=1$ is the trivial group, we omit it from the notation, as well as the decoration ``$\free$''.
\end{defn}
We will mostly consider the subcategory $\Gaf^{\Gamma,\free}_\emptyset$.
Intuitively, an object in $\Gaf^{\Gamma,\free}_\emptyset$ is a finite graph $G$ with a free $\Gamma$-action and a partition of its set of vertices as $A\sqcup V$, where we consider $A$ as the ``attaching vertices'' and $V$ as the ``inner vertices''; the set $H$ of ``half-edges'' is endowed with an involution without fixpoints (the restriction of $\upsilon$), whose orbits form the set $E$ of the ``edges'' of $G$; the restriction of $\sigma$ gives the information about how to attach the half-edges to $A\sqcup V$. A morphism in $\Gaf^{\Gamma,\free}_\emptyset$ from $G$ to $G'$ is intuitively the datum of a map of finite free $\Gamma$-sets $A\to A'$ and a $\Gamma$-equivariant map of graphs $G\to G'$ exhibiting $G'$ as the result of collapsing disjoint trees in $G$ (each intersecting $A$ in at most one point) to distinct vertices, and of ``base-changing'' the attaching locus of the obtained graph from $A$ to $A'$, along a $\Gamma$-equivariant map $A\to A'$.

\begin{defn}
We denote by $\Re^\Gamma\colon\Gaf^\Gamma_\emptyset\to\Gamma\cS$ the functor sending a $\Gamma$-gaf $G$ as in Definition \ref{defn:Gammagaf} to the pushout of the following span diagram of $\Gamma$-spaces
\[
\begin{tikzcd}
A\sqcup V&A\sqcup V\sqcup H\ar[r,"(-)/\upsilon"]\ar[l,"\sigma"']&A\sqcup V\sqcup E.
\end{tikzcd}
\]
We denote by $\beta^\Gamma\colon\AVH^\Gamma\Rightarrow\Re^\Gamma$ the resulting transformation of functors $\Gaf^\Gamma_\emptyset\to\Gamma\cS$. We use the same notation for the restricted functors on $\Gaf^{\Gamma,\free}_\emptyset$.
\end{defn}
Concretely, we may think of $\Re^\Gamma(G)$ as a ``geometric realisation'' of the combinatorial datum of a $\Gamma$-gaf $G$; we then have a $\Gamma$-equivariant map $\beta^\Gamma\alpha^\Gamma(G)\colon A\to\Re(G)$, corresponding to the inclusion of the attaching vertices.

\begin{rem}
\label{rem:fAcocartesian}
When $\Gamma=1$, the functor $\fA\colon\Gaf_\emptyset\to\Fin$ is a cocartesian fibration, whose straightening is the functor $\bGr(\emptyset,-)\colon\Fin\to\Catinfty$ sending a finite set $A$ to the (plain) category $\bGr(\emptyset,A)$, which is the morphism category in the 2-category $\bGr$ from \cite[Subsection 3.2]{Bianchi:graphcobset}. For generic $\Gamma$ we also have that $\fA^\Gamma\colon\Gaf_\emptyset^\Gamma\to\Gamma\Fin$, as well as its restriction $\fA^\Gamma\colon\Gaf_\emptyset^{\Gamma,\free}\to\Gamma\Fin_\free$, are cocartesian fibrations of categories.
\end{rem}

\begin{defn}
\label{defn:bGrX}
For $X\in\Gamma\cS$ we define the $\infty$-category $\bGr^\Gamma(X)$ as the following pullback in $\Catinfty$, in which we abbreviate $(\Gamma\Fin_\free)_{/X}:=\Gamma\Fin_\free\times_{\Gamma\cS}\Gamma\cS_{/X}$:
\[
\begin{tikzcd}[row sep=10pt]
\bGr^\Gamma(X)\ar[r,"\tilde\fA^\Gamma"]\ar[d,"\tilde d_0"']\ar[dr,phantom,"\lrcorner"very near start]&(\Gamma\Fin_\free)_{/X}\ar[d,"d_0"]\\
\Gaf^{\Gamma,\free}_\emptyset\ar[r,"\fA^\Gamma"]&\Gamma\Fin_\free,
\end{tikzcd}
\]
We further denote by $\Re^\Gamma_X\colon\bGr^\Gamma(X)\to(\Gamma\cS_{X/})^\simeq$
the functor sending $(G,A\to X)$ to the pushout of $\Gamma$-spaces $\Re^\Gamma(G)\sqcup_AX$, where the $\Gamma$-equivariant map $A\to\Re(G)$ is $\beta^\Gamma\alpha^\Gamma(G)$. When $\Gamma=1$ we omit it from the notation.
\end{defn}
Intuitively, an object in $\bGr^\Gamma(X)$ is the datum of a graph $G$ with a free $\Gamma$-action admitting finitely many free orbits of edges and vertices, a $\Gamma$-subset $A$ of its vertices, and a map of $\Gamma$-spaces $A\to X$; a morphism is given by collapsing trees and changing the attaching subset of vertices in the $\infty$-category $(\Gamma\Fin_\free)_{/X}$. 
The functor $\Re^\Gamma_X$ constructs a $\Gamma$-space by attaching the realisation of $G$ to $X$ along $A$. The fact that morphisms in $\Gaf^\Gamma_\emptyset$ are essentially given by collapsing trees implies that $\Re^\Gamma_X$ takes values in the core groupoid of $\Gamma\cS_{X/}$, making Definition \ref{defn:bGrX} into a good definition.
\begin{rem}
We give a slight alternative to Definition \ref{defn:bGrX}. The $\Gamma$-space $X\in\Gamma\cS$ corresponds to the object $X/\Gamma$ in the overcategory $\cS_{/\bB\Gamma}$; we may then identify $\bGr^\Gamma(X)$ with the fibre product $\Gaf^{\Gamma,\free}_\emptyset\times_{\Fin_{/\bB\Gamma}}\Fin_{/X}$, where the first leg is the functor $\fA^\Gamma\colon\Gaf^{\Gamma,\free}_\emptyset\to\Gamma\Fin_\free\simeq\Fin_{/\bB\Gamma}$. 
\end{rem}

\begin{lem}
\label{lem:bGrX}
For $X\in\cS$ we have equivalences of spaces
$\fM_\Gr(\emptyset,X)\simeq|\bGr(X)|$.
\end{lem}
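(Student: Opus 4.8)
The plan is to realise both $\fM_\Gr(\emptyset,X)$ and $|\bGr(X)|$ as the same colimit of a diagram of spaces indexed by $\Fin_{/X}$. Applying Corollary \ref{cor:colimFinX} with $Y=\emptyset$ yields
\[
\fM_\Gr(\emptyset,X)\simeq\colim_{(A\to X)\in\Fin_{/X}}\Gr_\cS\big((\emptyset\to\emptyset),(A\to X)\big).
\]
The next step is to compute the indexing functor. By Notation \ref{nota:morGrS} a morphism $(\emptyset\to\emptyset)\to(A\to X)$ in $\Gr_\cS$ is a commutative diagram with pushout right square whose top row is a morphism of $\Gr$; since $\emptyset$ is initial and the pushout square is determined by the remaining data, such a morphism is precisely the datum of a cospan $\emptyset\to G\ot A$ with $G$ homotopy equivalent to a finite graph, so that $\Gr_\cS((\emptyset\to\emptyset),(A\to X))\simeq\Gr(\emptyset,A)$. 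Reading off the shape of idle morphisms from Remark \ref{rem:idleshape} shows this identification is natural in $(A\to X)\in\Fin_{/X}$, the transition maps being the composition maps of $\Gr$ (pushing the graph out along $A\to A'$). Hence the indexing functor is the composite $\Fin_{/X}\xrightarrow{d_0}\Fin\xrightarrow{\Gr(\emptyset,-)}\cS$, and $\fM_\Gr(\emptyset,X)\simeq\colim_{\Fin_{/X}}\Gr(\emptyset,-)$.

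On the other side, by Definition \ref{defn:bGrX} the category $\bGr(X)=\Gaf_\emptyset\times_\Fin\Fin_{/X}$ is the pullback of $\fA\colon\Gaf_\emptyset\to\Fin$ along $d_0\colon\Fin_{/X}\to\Fin$, and by Remark \ref{rem:fAcocartesian} the functor $\fA$ is a cocartesian fibration with straightening $\bGr(\emptyset,-)\colon\Fin\to\Catinfty$; since cocartesian fibrations are stable under pullback, $\tilde\fA\colon\bGr(X)\to\Fin_{/X}$ is a cocartesian fibration classified by $\Fin_{/X}\xrightarrow{d_0}\Fin\xrightarrow{\bGr(\emptyset,-)}\Catinfty$. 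Invoking the standard fact that the classifying space of the total space of a cocartesian fibration $\cE\to\cB$ classified by $H$ is $\colim_\cB(|-|\circ H)$ — one localises $\cE$ fibrewise at all morphisms, which does not change the classifying space and produces the left fibration classified by $|-|\circ H$, whose classifying space is $\colim_\cB(|-|\circ H)$ by \cite{LurieHTT} — one obtains $|\bGr(X)|\simeq\colim_{\Fin_{/X}}|\bGr(\emptyset,-)|$. Finally I would use that $|\bGr(\emptyset,-)|\simeq\Gr(\emptyset,-)$ as functors $\Fin\to\cS$: this is the restriction to the mapping objects out of the monoidal unit $\emptyset$ of the symmetric monoidal equivalence $|\bGr|_2\simeq\Gr$ of \cite{Bianchi:graphcobset}, combined with the fact that the localisation of a $2$-category at its $2$-morphisms has mapping spaces equal to the classifying spaces of its mapping categories. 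Substituting this into the colimit gives $|\bGr(X)|\simeq\colim_{\Fin_{/X}}\Gr(\emptyset,-)\simeq\fM_\Gr(\emptyset,X)$.

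I expect the main obstacle to be coherence bookkeeping rather than anything conceptual: one must ensure all the identifications above are genuinely natural over $\Fin_{/X}$, so that the two functors $\Fin_{/X}\to\cS$ extracted from the two sides literally coincide and the colimits may be identified; relatedly, one must pin down the precise statement in \cite{Bianchi:graphcobset} — presumably a special case of \cite[Theorem 8.4]{Bianchi:graphcobset} — that supplies $|\bGr(\emptyset,A)|\simeq\Gr(\emptyset,A)$ functorially in $A$. The remaining steps are formal.
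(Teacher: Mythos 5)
Your proposal is correct and matches the paper's proof in all essentials: both rely on Remark \ref{rem:fAcocartesian} to see $\tilde\fA$ as a cocartesian fibration classified by $\bGr(\emptyset,-)\circ d_0$, on \cite[Theorem 8.4]{Bianchi:graphcobset} for $|\bGr(\emptyset,-)|\simeq\Gr(\emptyset,-)$, and on the colimit formula of Proposition \ref{prop:colimFinX}/Corollary \ref{cor:colimFinX}, with the fibrewise-localisation step linking $|\bGr(X)|$ to the colimit over $\Fin_{/X}$. The paper phrases the last step as identifying the fibrewise-localised fibration with the left fibration $\cC_{y/}\times_\cC\Fin_{/X}$ rather than equating two colimit expressions directly, but this is only a presentational difference.
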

\begin{proof}
By Remark \ref{rem:fAcocartesian} we have that $\tilde\fA\colon\bGr(X)\to\Fin_{/X}$ is the pullback of a cocartesian fibration, so it is again a cocartesian fibration; moreover, the straightening of $\tilde\fA$ is the composite $\bGr(\emptyset,-)\circ d_0\colon\Fin_{/X}\to\Catinfty$. We now recall from \cite[Theorem 8.4]{Bianchi:graphcobset} that the composite functor $\Fin\xrightarrow{\bGr(\emptyset,-)}\Catinfty\xrightarrow{|-|}\cS$ can be identified with $\Gr(\emptyset,-)\colon\Fin\to\cS$. It follows that the localisation of $\bGr(X)$ at the ``vertical'' wide subcategory $\bGr(X)\times_{\Fin_{/X}}(\Fin_{/X})^\simeq$ is the left fibration over $\Fin_{/X}$ corresponding to the composite functor $\Gr(\emptyset,-)\circ d_0\colon\Fin_{/X}\to\cS$; the total $\infty$-category of this left fibration is $\cC_{y/}\times_\cC\Fin_{/X}$, where we abbreviate $\cC=\Gr_\cS$ and $y=(\emptyset\to\emptyset)$, and we also rely on Corollary \ref{cor:GrinGrCob}. We conclude by applying Proposition \ref{prop:colimFinX}.
\end{proof}
In the light of the equivalence from Lemma \ref{lem:bGrX}, the map of spaces $\fM_\Gr(\emptyset,X)\to(\cS_{X/})^\simeq$ induced by $D_0$ agrees with the map induced on classifying spaces by the functor $\Re_X\colon\bGr(X)\to(\cS_{X/})^\simeq$.

Our last goal is to prove the following proposition.
\begin{prop}
\label{prop:GuirardelLevitt}
Let $Y$ and $X$ be spaces with $X$ aspherical. Then the functor $D_0\colon\GrCob\to\Cospan(\cS)$ induces at the level of morphism spaces an inclusion of spaces $\fM_\Gr(Y,X)\hto(\cS_{Y\sqcup X/})^\simeq$, with essential image spanned by those cospans of spaces $Y\to W\ot X$ such that $W$ may be obtained from $X$ by attaching finitely many 0-cells and 1-cells.
\end{prop}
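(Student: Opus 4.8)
The plan is to reduce to $Y=\emptyset$, identify the comparison map with $|\Re_X|\colon|\bGr(X)|\to(\cS_{X/})^\simeq$, pin down its essential image, and then show its homotopy fibres are classifying spaces of deformation spaces of trees, which are contractible by Guirardel--Levitt.

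\emph{Reduction to $Y=\emptyset$ and the essential image.} By Remark \ref{rem:wlogYempty} the square exhibiting $\fM_\Gr(Y,X)$ as a base change of $\fM_\Gr(\emptyset,X)$ along $(\cS_{Y\sqcup X/})^\simeq\to(\cS_{X/})^\simeq$ is a pullback. Since monomorphisms of spaces are stable under base change, and since a cospan $Y\to W\ot X$ maps to the object $X\to W$ downstairs, it suffices to prove the statement for $Y=\emptyset$. By Lemma \ref{lem:bGrX} and the remark following it, the map in question is then $|\Re_X|\colon|\bGr(X)|\to(\cS_{X/})^\simeq$. Its essential image is immediate: $\Re_X(G,A\to X)=\Re(G)\sqcup_AX$ is $X$ with the finitely many $0$- and $1$-cells of the finite graph $\Re(G)$ attached along the subset $A$ of its vertices; conversely every relative CW pair $(W,X)$ of dimension $\le1$ arises this way; and this property of $X\to W$ is invariant under equivalence in $\cS_{X/}$. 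Hence the essential image is a union of path components $\cU\subseteq(\cS_{X/})^\simeq$.

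\emph{Reduction of the monomorphism claim to contractibility of fibres.} It remains to show $|\Re_X|$ is a monomorphism onto $\cU$, i.e. that its homotopy fibre over a point $(X\to W)$ is empty when $(X\to W)\notin\cU$ (clear from the previous paragraph) and contractible when $(X\to W)\in\cU$. Because the target is an $\infty$-groupoid, geometric realization commutes with this particular base change, so the homotopy fibre over $W$ is $|\bGr(X)_W|$, where $\bGr(X)_W$ is the fibre of $\Re_X$ over $W$: the $\infty$-category of triples consisting of a gaf $G$, a map $A\to X$ out of its attaching set, and an equivalence $\Re(G)\sqcup_AX\simeq W$ under $X$, with morphisms the tree-collapse-and-rebasing maps of $\bGr(X)$ compatible with these equivalences. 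I would then argue $\bGr(X)_W$ has contractible classifying space as follows. First, $X$ aspherical forces $W$ aspherical whenever $(X\to W)\in\cU$: such a $W$ is a graph of aspherical spaces with point edge spaces, so each of its universal covers is a tree of contractible spaces, hence contractible. Consequently $\bGr(X)_W$ is equivalent to its $1$-categorical shadow under $\Pi_1(-)$ — objects become splittings of $\Pi_1(W)$ over trivial edge groups refining $X$, morphisms become collapses — and, passing to Bass--Serre trees, to the poset of trees in the deformation space cut out by the elliptic family coming from the components of $X$. That deformation space is contractible by \cite{GuirardelLevitt}, which finishes the proof. (One can equivalently run this step entirely with spaces rather than groups, using the ``blow-up'' constructions of \cite{GuirardelLevitt} attached to the pair $(W,X)$.)

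\emph{The main obstacle.} The hard part will be this last step — identifying $\bGr(X)_W$ with an honest Guirardel--Levitt deformation space — and it has two delicate points. First, one must verify that the combinatorial morphisms of gafs (collapsing forests of subtrees each meeting $A$ in at most one point, together with rebasing along maps of finite sets over $X$) match exactly the elementary collapses generating the deformation space, so that $\bGr(X)_W$ is the \emph{entire} deformation space rather than a subposet of it; this is where the bookkeeping of Definitions \ref{defn:Gammagaf} and \ref{defn:bGrX}, together with the identification \cite[Theorem 8.4]{Bianchi:graphcobset}, enters. Second, one must check that fixing the map $X\to W$ (as opposed to working modulo homotopy automorphisms of $W$ preserving the elliptic family) corresponds to passing from a quotient of the deformation space to the deformation space itself, which is the object Guirardel--Levitt prove contractible. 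Everything else — the reduction to $Y=\emptyset$, the computation of the essential image $\cU$, and the translation of ``monomorphism onto $\cU$'' into contractibility of the fibres $|\bGr(X)_W|$ — is formal.
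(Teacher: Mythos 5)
Your overall plan matches the paper's proof: reduce to $Y=\emptyset$ via Remark \ref{rem:wlogYempty}, compute the fibre of $|\Re_X|$ over a fixed $\bar W$ as a classifying space of a fibre category, and identify that category with (the spine of) a Guirardel--Levitt deformation space. Your identification of the two ``obstacles'' is accurate — that is exactly where the work lies — but you have left them unresolved, and the paper's execution contains a third subtlety you do not flag. The paper first reduces to connected $\bar W$ (by splitting over $\pi_0$, using Corollary \ref{cor:cSinGrCob} to discard the cofinitely many components where $X_i\xrightarrow{\simeq}\bar W_i$), and to $\pi_0(X)$ finite, $p\ge1$, and $\chi(\bar W,X)<0$ (the degenerate cases being handled by Corollaries \ref{cor:cSinGrCob} and \ref{cor:GrinGrCob}). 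It then does not pass to $\Pi_1$-shadows: instead it realises the fibre category over $\bar W\simeq\bB\Gamma$ directly as $\bGr^\Gamma_\tree(\tilde X)$, a category of $\Gamma$-equivariant combinatorial gafs attached to the finite $\Gamma$-set $\tilde X=\coprod\Gamma/\Gamma_i$, and runs an explicit chain of adjunctions and a cocartesian fibration (to reduced, to minimal, to valence $\ge3$) to bring this to the barycentric spine of $P\cO$. This is where your first obstacle — matching gaf morphisms to elementary collapses — is actually resolved.

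The subtlety you miss is that Guirardel--Levitt's contractibility theorem is stated under the hypothesis that \emph{all} free factors $\Gamma_i$ are non-trivial, whereas here the aspherical space $X$ may perfectly well have simply connected components, giving $\Gamma_i=1$. For such factors the ``marked vertex $v_i$'' is no longer characterised as a fixpoint and must be carried as extra data (a choice of a vertex with free $\Gamma$-orbit, distinct from the others). The paper observes that Guirardel--Levitt's proof extends to this enlarged setting with minor modifications, but this is a genuine extension of their theorem, not a direct citation, and must be argued. Your parenthetical ``one can equivalently run this step entirely with spaces'' gestures at a workaround, but does not engage with the fact that the contractibility result itself needs strengthening. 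As written, then, the proposal is a correct plan along the paper's lines, with the two obstacles you identify plus this third one still to be supplied.
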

\begin{proof}
By Remark \ref{rem:wlogYempty} it suffices to consider the case $Y=\emptyset$. It is evident that a cospan of spaces $Y\to W\ot X$ is in the essential image if and only if the pair $(W,X)$ satisfies the given property. We may therefore fix $\bar W\in(\cS_{X/})^\simeq$ with the given property and aim at proving that the fibre at $\bar W$ of the map $\fM_\Gr(\emptyset,X)\to(\cS_{X/})^\simeq$ is contractible. 

If $\bar W$ is disconnected, write $\bar W=\coprod_{i\in I}\bar W_i$, and decompose $X=\coprod_{i\in I}\bar X_i$ by letting $\bar X_i$ be the preimage of $\bar W_i$ in $X$. Then we have an identification
\[
\fib_{\bar W}(\fM_\Gr(\emptyset,X)\to(\cS_{X/})^\simeq)\simeq\prod_{i\in I}\fib_{\bar W_i}(\fM_\Gr(\emptyset,\bar X_i)\to(\cS_{\bar X_i/})^\simeq),
\]
where all but finitely many factors in the product are contractible (so the above is essentially a finite product): this follows from the hypothesis that $\bar W$ can be obtained from $X$ by attaching \emph{finitely many} cells, and the fact that if $\bar X_i\xrightarrow{\simeq}\bar W_i$, then Corollary \ref{cor:cSinGrCob} implies that $\fib_{\bar W_i}(\fM_\Gr(\emptyset,\bar X_i)\to(\cS_{\bar X_i/})^\simeq)$ is contractible.

Thus it suffices to prove the claim under the additional hypothesis that $\bar W$ is connected. This also implies that $\pi_0(X)$ is finite, because it must be possible to make $X$ connected by attaching finitely many cells. Let therefore $X=\coprod_{i=1}^pX_i$, with $X_i\simeq B\Gamma_i$ for $p\ge0$ and for some discrete groups $\Gamma_i$. We stress that some of the $\Gamma_i$ may be the trivial group; if however this happens for all $0\le i\le p$, then the equivalence $\fib_{\bar W}(\fM_\Gr(\emptyset,X)\to(\cS_{X/})^\simeq)$ follows already from Corollary \ref{cor:GrinGrCob}. We may therefore assume  that $p\ge1$ and, without loss of generality, that $\Gamma_1$ is not the trivial group. Similarly, if $\chi(\bar W,X)=0$, then $X\xrightarrow{\simeq}\bar W$ and the equivalence 
$\fib_{\bar W}(\fM_\Gr(\emptyset,X)\to(\cS_{X/})^\simeq)$ follows already from Corollary \ref{cor:cSinGrCob}. We may therefore assume $\chi(\bar W,X)<0$: concretely this means that if $\bar W\simeq \bigvee_{i=1}^p X_i\vee\bigvee_{j=1}^kS^1$, then $p+k\ge2$. We now denote by $\Gamma$ the free product
\[
\Gamma:=\bigast_{i=1}^p\Gamma_i*\bigast_{j=1}^k\Z
\]
and fix an identification $(X\to\bar W)\simeq(\coprod_{i=1}^p\bB\Gamma_i\to\bB\Gamma)$ in $\Fun([1],\cS)$. We let $\tilde X:=X\times_{\bB\Gamma}*$; the equivalence $-\times_{\bB\Gamma}*\colon\cS_{/\bB\Gamma}\xrightarrow{\simeq}\Gamma\cS$ makes $\tilde X$ into an object in $\Gamma\cS$, and we observe that $\tilde X$ in fact belongs to $\Gamma\Fin\subset\Gamma\cS$; more precisely, the above data give a canonical identification $\tilde X\simeq\coprod_{i=1}^p\Gamma/\Gamma_i$.

By Lemma \ref{lem:bGrX} we may compute $\fib_{\bar W}(\fM_\Gr(\emptyset,X)\to(\cS_{X/})^\simeq)$ as the classifying space of the $\infty$-category $\bGr(X)_{\bB\Gamma}$ defined by the left pullback square in the following diagram in $\Catinfty$, or equivalently, as the total pullback:
\[
\begin{tikzcd}[row sep=10pt]
\bGr(X)_{\bB\Gamma}\ar[r]\ar[d]\ar[dr,phantom,"\lrcorner"very near start]&*\ar[d,"\bB\Gamma"']\ar[r,equal]\ar[dr,phantom,"\lrcorner"very near start]&*\ar[d,"\bB\Gamma"]\\
\bGr(X)\ar[r,"\Re_X"]&(\cS_{X/})^\simeq\ar[r,hook]&\cS_{X/}.
\end{tikzcd}
\]
We may regard $\bGr(X)_{\bB\Gamma}$ as a full $\infty$-subcategory of the lax pullback $\bGr(X)'_{\bB\Gamma}$ of the same total diagram above, i.e. the following pullback on left, using the under-overcategory $\cS_{X//\bB\Gamma}$ defined by the pullback on right:
\[
\begin{tikzcd}[row sep=10pt]
\bGr(X)'_{\bB\Gamma}\ar[r]\ar[d]\ar[dr,phantom,"\lrcorner"very near start]&\cS_{X//\bB\Gamma}\ar[d]\\
\bGr(X)\ar[r,"\Re_X"]&\cS_{X/};
\end{tikzcd}
\hspace{1cm}
\begin{tikzcd}[row sep=10pt]
\cS_{X//\bB\Gamma}\ar[d]\ar[r]\ar[dr,phantom,"\lrcorner"very near start]&\Fun([2],\cS)\ar[d,"d_1"]\\
*\ar[r,"(X\to\bB\Gamma)"]&\Fun([1],\cS).
\end{tikzcd}
\]
We may now identify $\bGr(X)'_{\bB\Gamma}$ with $\bGr^\Gamma(\tilde X)$. Concretely, we have a functor $\bGr^\Gamma(\tilde X)\to\bGr(X)'_{\bB\Gamma}$ sending a pair $(G,A\to \tilde X)$, consisting of a $\Gamma$-gaf together with a $\Gamma$-equivariant map $A\to \tilde X$, to the quotient pair $(G/\Gamma,A/\Gamma\to\tilde X/\Gamma\simeq X)$, where $G/\Gamma:=(A/\Gamma,V/\Gamma,H/\Gamma,\sigma/\Gamma,\upsilon/\Gamma)$; the pushout $\Re(G/\Gamma)\sqcup_{A/\Gamma}X$ is endowed with a canonical map to $\bB\Gamma$. The inverse functor $\bGr(X)'_{\bB\Gamma}\to\bGr^\Gamma(\tilde X)$ sends a triple $(G,A\to X,\Re(G)\sqcup_AX\to\bB\Gamma)$, consisting of a gaf $G$, a map $A\to X$, and a map $\Re_X(G)\sqcup_AX\to\bB\Gamma$, to the pair $(\tilde G,\tilde A\to\tilde X)$, where $\tilde A=A\times_{\bB\Gamma}*$, $\tilde V=V\times_{\bB\Gamma}*$ etc., and where the map $\tilde A\to \tilde X$ is also the image of $A\to X$ along the pullback functor $-\times_{\bB\Gamma}*$.

In particular $\bGr(X)'_{\bB\Gamma}$, and hence $\bGr(X)_{\bB\Gamma}$, are plain categories. Along the equivalence $\bGr(X)'\simeq\bGr^\Gamma(\tilde X)$, the full subcategory $\bGr(X)_{\bB\Gamma}\subset\bGr(X)'_{\bB\Gamma}$ corresponds to the full subcategory $\bGr^\Gamma_\tree(\tilde X)\subset\bGr^\Gamma(\tilde X)$ spanned by pairs $(G,A\to\tilde X)$ whose image along $\Re_{\tilde X}^\Gamma$ is a $\Gamma$-space under $X$ with contractible underlying space; we think of $\Re(G)\sqcup_A\tilde X$ as a tree (i.e. a contractible graph) endowed with a $\Gamma$-action that satisfies the following:
\begin{itemize}
\item there are finitely many $\Gamma$-orbits of vertices and edges, 
\item there are $p$ marked and pairwise distinct orbits of vertices, each endowed with an identification with the finite $\Gamma$-set $\Gamma/\Gamma_i$;
\item all other orbits of vertices, as well as all orbits of edges, are free.
\end{itemize}
Our goal is to show that $|\bGr^\Gamma_\tree(\tilde X)|\simeq*$. Note that we have a restricted cocartesian fibration $\tilde\fA^\Gamma\colon\bGr^\Gamma_\tree(\tilde X)\to\Gamma\Fin_\free$.
We may consider the full subcategory of 
$\bGr^\Gamma_\tree(\tilde X)_\red\subset\bGr^\Gamma_\tree(\tilde X)$ spanned by objects $(G,A\to\tilde X)$ satisfying the following property: for all $a\in A$, the preimage $\sigma^{-1}(a)\cap H$ is a singleton. The inclusion $\bGr^\Gamma_\tree(\tilde X)_\red\subset\bGr^\Gamma_\tree(\tilde X)$ admits a right adjoint, sending a generic pair $(G,A\to\tilde X)$ to the pair $(G',A'\to\tilde X)$ where $G':=(A',V,H,\sigma',\upsilon')$, $A':=\sigma^{-1}(A)\cap H$, $\sigma'$ agrees with the identity on $A'\subset H$ and with $\sigma$ elsewhere, and $\upsilon'$ agrees with the identity on $A'$ and with $\upsilon$ elsewhere; the counit of the adjunction $(G',A'\to\tilde X)\to(G,A\to\tilde X)$ is the $\tilde\fA^\Gamma$-cocartesian lift of $\sigma\colon A'\to A$ at $(G',A'\to\tilde X)$. It suffices therefore to prove that $|\bGr^\Gamma_\tree(\tilde X)_\red|\simeq*$.

Now let $\Gaf_{\emptyset,\tilde X}^\Gamma$ denote the fibre at $\tilde X\in\Gamma\Fin$ of the functor $\fA^\Gamma\colon\Gaf_\emptyset^\Gamma\to\Gamma\Fin$; then we may identify $\bGr^\Gamma_\tree(\tilde X)_\red$ with the full subcategory $\Gaf_{\emptyset,\tilde X}^{\Gamma,\tree}\subset\Gaf_{\emptyset,\tilde X}^\Gamma$ spanned by objects $G$ such that the underlying space of the $\Gamma$-space $E\sqcup_H(\tilde X\sqcup V)$ is contractible. It suffices therefore to prove that $|\Gaf_{\emptyset,\tilde X}^{\Gamma,\tree}|\simeq*$.

We may consider the full subcategory $\Gaf^{\Gamma,\min}_{\emptyset,\tilde X}\subset\Gaf^{\Gamma,\tree}_{\emptyset,\tilde X}$ spanned by objects $G$ such that $G$ admits no \emph{leaves}; here by a \emph{leaf} we mean an element $v$ in the set of $V$ of inner vertices of $G$ whose valence is $1$, i.e. such that $\sigma^{-1}(v)\cap H$ is a singleton. If we think of a generic object of $\Gaf^{\Gamma,\tree}_{\emptyset,\tilde X}$ as a tree with $\Gamma$-action endowed with a $\Gamma$-equivariant inclusion of $\tilde X$ into the $\Gamma$-set of vertices, then for an object in $\Gaf^{\Gamma,\min}_{\emptyset,\tilde X}$ we are requiring that the $\Gamma$-action on the tree be \emph{minimal}, in the sense that there is no proper $\Gamma$-equivariant subtree containing $\tilde X$.
The inclusion $\Gaf^{\Gamma,\min}_{\emptyset,\tilde X}\subset\Gaf^{\Gamma,\tree}_{\emptyset,\tilde X}$ admits a left adjoint, sending $G$ to the $\Gamma$-gaf $G'$ obtained from $G$ by repeatedly collapsing leaves along with their associated edges.
Our goal becomes therefore to show that $|\Gaf^{\Gamma,\min}_{\emptyset,\tilde X}|\simeq*$. 

We finally let $\Gaf^{\Gamma,\min}_{\emptyset,\tilde X,\ge3}\subset\Gaf^{\Gamma,\min}_{\emptyset,\tilde X}$ denote the full subcategory spanned by objects $G$ such that for all $v\in V$, the valence of $v$, i.e. the cardinality of $\sigma^{-1}(v)\cap H$, is at least 3. We have a functor $F\colon\Gaf^{\Gamma,\min}_{\emptyset,\tilde X}\to\Gaf^{\Gamma,\min}_{\emptyset,\tilde X,\ge3}$, sending $G$ to the $\Gamma$-gaf $G'$ obtained from $G$ by removing all inner vertices of valence 2 and concatenating the edges accordingly. We have that $F$ is a cocartesian fibration; the fibre $F^{-1}(G')$ can be thought of as the category of subdivisions of edges of $G'$, and is equivalent to $\bDelta^{E'}$, where $\bDelta$ denotes the simplex category; in particular all fibres of $F$ have contractible classifying space, so that we have $|\Gaf^{\Gamma,\min}_{\emptyset,\tilde X}|\simeq|\Gaf^{\Gamma,\min}_{\emptyset,\tilde X,\le3}|$.

We conclude by arguing that $|\Gaf^{\Gamma,\min}_{\emptyset,\tilde X,\le3}|\simeq*$. In \cite[Section 4]{GuirardelLevitt}, Guirardel--Levitt introduce a topological space $P\cO$ associated with a free product of groups $\Gamma$ as above, under the condition that $p\ge1$, $p+k\ge2$, and that \emph{all factors $\Gamma_i$ are non-trivial groups}. 

Points in $P\cO$ are given by isometry classes of metric simplicial $\Gamma$-trees $T$ such that the action of $\Gamma$ on $T$ has finitely many orbits of vertices and edges, and that the action is minimal (i.e. $T$ has no leaves). As trees are considered as a particular type of metric spaces, rather than as combinatorial objects, vertices of valence 2 are not allowed in $T$, or otherwise said, they are considered as points inside a longer edge, unless they have a non-trivial stabiliser. For the same reason, a tree $T$ is identified with any tree $T'$ obtained by actually collapsing edges of length 0, as the underlying metric spaces are $\Gamma$-equivariantly, and uniquely, isometric.
The ``$P$'' in the notation ``$P\cO$'' refers to the fact that the length function is considered as a projective function, i.e. up to rescaling by positive real numbers.
A metric simplicial $\Gamma$-tree $T$ must satisfy the following additional property in order to represent a point in $P\cO$: for all $1\le i\le p$ \emph{there is} exactly one vertex $v_i$ in $T$ whose stabiliser is $\Gamma_i$, whereas all other points of $T$ have trivial stabilisers; clearly, $v_i\neq v_j$ if $i\neq j$ because $\Gamma_i\neq\Gamma_j$ as subgroups of $\Gamma$, using that all $\Gamma_i$ are non-trivial.

We have emphasized two conditions that differ in the setting of Guirardel--Levitt from our setting: all $\Gamma_i$ are assumed to be non-trivial groups, and for $T\in P\cO$ the point $v_i\in T$ is not considered as an additional datum, but a property of the tree $T$, namely the unique fixpoint of $\Gamma_i$. We now observe that the construction of Guirardel--Levitt makes sense also if we instead relax the condition that all $\Gamma_i\neq1$, and if we specify as data of $T$ the vertices $v_i$ that have $\Gamma_i$ as stabiliser: for all $\Gamma_i\neq1$ there will be a unique choice if there is a choice at all (as we still require stabilisers of edges to be trivial), but for all $\Gamma_i=1$ we are choosing a point $v_i$ in $T$ with free $\Gamma$-orbit; we further require that the points $v_1,\dots,v_p$ be distinct.

The proof that $P\cO$ is contractible extends to this setting with minor modifications. If we now take the ``barycentric spine'' of $P\cO$, we obtain a topological space homeomorphic to the geometric realisation of the nerve of a skeleton of the category $\Gaf^{\Gamma,\min}_{\emptyset,\tilde X,\le3}$, in which we select one object in each isomorphism class. This concludes the proof.
\end{proof}

\bibliography{Bibliographystringtopology.bib,Bibliographygraphs.bib,Bibliographyother.bib}{}
\bibliographystyle{plain}

\end{document}